\theoremstyle{plain}
\newtheorem{thm}{\protect\theoremname}[section]
\theoremstyle{definition}
\newtheorem{defn}[thm]{\protect\definitionname}
\theoremstyle{remark}
\newtheorem{rem}[thm]{\protect\remarkname}
\theoremstyle{plain}
\newtheorem{lem}[thm]{\protect\lemmaname}
\theoremstyle{plain}
\newtheorem{cor}[thm]{\protect\corollaryname}
\newtheorem{prop}[thm]{\protect\propositionname}
\providecommand{\corollaryname}{Corollary}
\providecommand{\definitionname}{Definition}
\providecommand{\lemmaname}{Lemma}
\providecommand{\remarkname}{Remark}
\providecommand{\theoremname}{Theorem}
\providecommand{\propositionname}{Proposition}
\begin{document}
\title{Cameron--Martin Type Theorem for a Class of non-Gaussian Measures}
\author{\and \textbf{Mohamed Erraoui}\\
 Department of Mathematics, Faculty of Science El Jadida,\\
 Choua{\"i}b Doukkali University, Morocco\\
 Email: erraoui@uca.ac.ma\\
 \textbf{Michael R{\"o}ckner},\\
 Fakult{\"a}t f{\"u}r Mathematik, Universit{\"a}t Bielefeld,\\
 D-33501 Bielefeld, Germany.\\
 Email: roeckner@math.uni-bielefeld.de \\ and\\
 Academy of Mathematics and System Sciences, CAS, Beijing, China. \and
 \textbf{Jos{\'e} Lu{\'i}s da Silva},\\
 CIMA, Faculty of Exact Sciences and Engineering,\\
 University of Madeira, Campus da Penteada,\\
 9020-105 Funchal, Portugal.\\
 Email: joses@staff.uma.pt}
 
\maketitle
\begin{abstract}
In this paper, we study the quasi-invariant property of a class of non-Gaussian measures. These measures are associated with the family of generalized grey Brownian motions. We identify the Cameron--Martin space and derive the explicit Radon-Nikodym density in terms of the Wiener integral with respect to the fractional Brownian motion. Moreover,  we show an integration by parts formula for the derivative operator in the directions of the Cameron--Martin space. As a consequence, we derive the closability of both the derivative and the corresponding gradient operators.
\end{abstract}

\tableofcontents{}

\section{Introduction}
The main goal of this paper (see Theorem~\ref{thm:CM-ggBm} below) is to prove the Cameron--Martin theorem for the class of non-Gaussian processes, called generalized grey Brownian motion (ggBm) denoted by $B_{\beta,\alpha}$, $0<\beta\le1$ and $0<\alpha\le2$. Specifically, we are looking for suitable (random) shifts $\xi$ such that the distributions of $B_{\beta,\alpha}$ and $B_{\beta,\alpha}+\xi$ are equivalent. In other words, we are studying the quasi-invariance property of the ggBm law.

W.~Schneider \cite{Schneider90a, Schneider90, MR1190506} was the first to introduce this type of process, which he called grey Brownian motion (gBm). This provided stochastic models for slow-anomalous diffusion. A.~Mura then extended this to ggBm, which can be used as non-Markovian stochastic models for either slow or fast-anomalous diffusion; see \cite{Mura2008}.
The remarkable characteristic of ggBm is that it can be expressed in terms of $B^{\alpha/2}$, a fractional Brownian motion (fBm) with Hurst parameter $\alpha/2$. The first representation of ggBm is given in \cite{Mura_Pagnini_08} as the product of fBm $B^{\alpha/2}$ and a non-negative random variable $Y_\beta$ with $M$-Wright density function (see  Subsection~\ref{subsec:ggBm} below for details), that is,
\begin{equation}\label{eq:repre-ggBm-intro1}
    \big\{B_{\beta,\alpha}(t)\;t\ge0\big\}  \overset{\mathcal{L}}{=} \big\{\sqrt{Y_\beta}B^{\alpha/2}(t),\;t\ge0\big\},
\end{equation}
where $\overset{\mathcal{L}}{=}$ means equality in law. 
The representation \eqref{eq:repre-ggBm-intro1} is particularly interesting since it allows us to infer a variety of properties of ggBm $B_{\beta,\alpha}$ from those of fBm $B^{\alpha/2}$. For example, the H\"older continuity of the trajectories. 

An alternative representation of ggBm is given in terms of a subordination of fBm. The stochastic representation through subordinated processes is very natural, as it gives a direct physical interpretation, see Remark 9.3 in \cite{Mura2008}. However, the subordinated representations already given for ggBm in \cite{Mura_Taqqu_Mainardi_08, DaSilva2014,daSilva2020} represent ggBm only in one dimensional time marginal law. Therefore, they cannot characterize the complete stochastic structure of the process, nor can they substitute ggBm when more than one marginal law is involved, as in the Cameron-Martin theorem.
 So, another representation involving finite-dimensional distributions is needed for the problem that concerns us here. Our investigation yields the following representation (see Proposition~\ref{prop:subord-ggBm})
\begin{equation}\label{eq:repre-ggBm-intro2}
    \big\{B_{\beta,\alpha}(t)\;t\ge0\big\}  \overset{\mathcal{L}}{=} \big\{B^{\alpha/2}(tY_\beta^{1/\alpha}),\;t\ge0\big\},
\end{equation}
 which allows us to achieve the main goal of the paper, that is, the Cameron--Martin theorem for ggBm, the integration by parts formula, and the closability of the derivative and gradient operators, see Sections~\ref{sec:Cameron-Martin-formula} and \ref{sec:IbP-closability} below.   
Let us now explain the approach we adopt. 

First, the general setting is as follows:
Let $\mathbb{W}$ be the classical Wiener space, $\mathcal{B}(\mathbb{W})$ its Borel $\sigma$-algebra, and $H=\alpha/2\in [1/2,1)$. Furthermore, let $\mathbb{P}_H$ be the unique probability measure on $\mathbb{W}
$ such that the canonical process $W^H$ is a fBm. The distribution of the random variable $Y_\beta$ is denoted by $\mathbb{P}_{Y_\beta}$ and $Y_\beta$ can be realized on $\mathbb{R}^+$ as the identity map
\begin{equation}\label{eq:canonical-space}
\mathcal{Y}_{\beta}:\mathbb{R}^{+}\longrightarrow\mathbb{R}^{+},\;\tau\mapsto\mathcal{Y}_\beta(\tau):=\tau.
\end{equation}
The ggBm is realized on the probability space $\big(\mathbb{W}\times\mathbb{R}^+,\mathcal{B}(\mathbb{W})\otimes\mathcal{B}(\mathbb{R}^+),\mathbb{P}_H\otimes\mathbb{P}_{Y_\beta}\big)$ as the canonical process $X_{\beta,2H}$ defined by
\[
X_{\beta,2H}(t)(w,\tau)=W^{H}\big(t\mathcal{Y}_{\beta}^{1/(2H)}(\tau)\big)(w)=w\big(t\tau^{1/(2H)}\big),\; t,\tau\in\mathbb{R}^{+},\; w\in\mathbb{W}.
\]
After considering three essential elements - the representation \eqref{eq:repre-ggBm-intro2} for ggBm, the Cameron--Martin type theorems for fBm, and subordinated Bm (\cite{Deng2015}) - we conclude that the natural choice for $\xi$ is $h(t\mathcal{Y}_\beta)$, $t\ge0$, where $h\in\mathcal{H}_{\alpha/2}$. Here $\mathcal{H}_{\alpha/2}$ is the Cameron--Martin space associated with fBm $B^{\alpha/2}$; see Subsection~\ref{subsec:Wiener-int-fBm}. Our proof is based on an appropriate use of the exponential martingale and the Cameron--Martin theorem for fBm.

It is a well-known fact that an important consequence of the quasi-invariant property is the integration by parts formula.
This is what we have obtained in Theorems~\ref{thm:IbP-gBm} and \ref{thm:IbP-ggBm} for the partial derivative of a bounded smooth cylinder functions $F:\mathbb{W}\times\mathbb{R}^+\longrightarrow\mathbb{R}$ in the direction of $h\in\mathcal{H}_H$ defined by
\[
(\partial_{1,h}F)(w,\tau):=\lim_{\varepsilon\to0}\frac{F(w+\varepsilon h,\tau)-F(w,\tau)}{\varepsilon},\quad w\in\mathbb{W}, \tau\in\mathbb{R}^+.
\]
Finally, using the integration by parts formula, we prove in Theorem~\ref{thm:closable-gradient-ggBm} the closability on $L^p(\mathbb{W}\times\mathbb{R}^+,\sigma\big(X_{\beta,2H}(t),t\in\left[0,T\right]\big),\mathbb{P}_{H}\otimes\mathbb{P}_{Y_\beta})$, $p\geq 1$, of the gradient $\nabla_1$, the unique element in $ \mathcal{H}_H$  verifying
\[
(\partial_{1,h}F)(w,\tau)=\big(\nabla_1 F(w,\tau),h\big)_{\mathcal{H}_{H}},\quad w\in\mathbb{W},\;\tau\in\mathbb{R}^+,
\]
where $(\cdot,\cdot)_{\mathcal{H}_H}$ denotes the inner product given in Eq.~\eqref{eq:scalarproductH0}; see also the diagram in Fig.~\ref{diagram}.

The paper is organized as follows. In Section~\ref{sec:Preliminaries} we recall the definition and key properties of fBm that will be needed later. The Cameron--Martin space of the fBm is recalled and characterized in some detail. The Wiener integral with respect to fBm and the Cameron--Martin theorem of fBm are presented. The class of ggBm is described as well as their canonical realization. In Section~\ref{sec:Cameron-Martin-formula} we show the main results of the paper concerning the Cameron--Martin theorem for the class of ggBm. Section~\ref{sec:IbP-closability} contains an integration by parts formula of the derivative in the directions of the Cameron-Martin space and the closability of both the derivative and the corresponding gradient.
\\[.2cm]
{\bf Notation.} In what follows, we denote by $(\Omega,\mathcal{F},\mathbb{P})$ a
complete probability space. In addition, we consider the Hilbert space
$L^{2}(\lambda):=L^{2}(\mathbb{R}^{+},\lambda;\mathbb{R})$, of real-valued square integrable Borel functions on $\mathbb{R}^{+}$ with respect to (w.r.t.) the Lebesgue
measure $\lambda$. The scalar product in $L^2(\lambda)$ is denoted by $(\cdot,\cdot)_{2}$
and the associated norm by $\|\cdot\|_{2}$. By $(\mathbb{W},\mathcal{B}(\mathbb{W}))$
we denote the classical Wiener space. More precisely, the path space 
\[
\mathrm{\mathbb{W}}=\big\{ w:\mathbb{R}^{+}\longrightarrow\mathbb{R}\mid w\;\mathrm{is\;continuous\;and}\;w(0)=0\big\},
\]
endowed with the locally uniform convergence topology and $\mathcal{B}(\mathbb{W})$
denotes the associated Borel $\sigma$-algebra. 
Furthermore, for every $T>0$, we define 
\[
\mathbb{W}_{T}:=\left\{ w\!\!\upharpoonright_{\left[0,T\right]}\, \mid w\in\mathbb{W}\right\},
\]
where $w\!\!\upharpoonright_{\left[0,T\right]}$ is the restriction of $w$ to the interval $[0,T]$.
For a continuous process
$X,$ by $\mathbb{P}_{X}$ we mean the law of the process $X$ on
$\mathbb{W}$. 
\section{Preliminaries}
\label{sec:Preliminaries}

In this section, we discuss the types of processes we are working with, namely the fBm and ggBm. In addition, the Cameron--Martin space and the Wiener integral w.r.t.~fBm are reviewed. Finally, two useful representations of ggBm and its canonical realization are shown.

\subsection{Fractional Brownian Motion}

In this subsection, we review the definition and important properties of the fBm motion that are necessary for what follows. For more
details, see, for example, the book \cite{mishura08}, and references
therein.
\begin{defn}[Fractional Brownian motion]
\label{def:fBm}A (one-side, normalized) fBm with Hurst index $H\in\big[1/2,1)$ is a Gaussian process $B^{H}=\big\{ B^{H}(t),t\in\mathbb{R}^{+}\big\}$
on $(\Omega,\mathcal{F},\mathbb{P})$, satisfying the properties 
\begin{enumerate}
\item $B^{H}(0)=0$, $\mathbb{P}$-a.s., that is, $B^{H}$ starts at zero
almost surely. 
\item $\mathbb{E}[B^{H}(t)]=0,$ $t\in\mathbb{R}^{+}$, the process $B^{H}$
is centered.
\item The covariance kernel $R_{H}$ of $B^{H}$ is given, for any $s,t\in\mathbb{R}^{+}$, by
\begin{equation}
R_{H}(t,s):=\mathbb{E}\big[B^{H}(t)B^{H}(s)\big]=\frac{1}{2}\big(t^{2H}+s^{2H}-|t-s|^{2H}\big).\label{eq:covariance-kernel-fBm}
\end{equation}
\end{enumerate}
\end{defn}

\begin{rem}
\label{rem:fBm}
\begin{enumerate}
\item Note that $\mathbb{E}\big[\big(B^{H}(t)-B^{H}(s)\big)^{2}\big]=|t-s|^{2H}$
and $B^{H}$ is a Gaussian process, then $B^{H}$ has a modification with continuous trajectories, according to the Kolmogorov theorem. We will always consider such a modification and keep the same notation.
\item In general, for every $n\geq1$, it holds 
\[
\mathbb{E}\big[|B^{H}(t)-B^{H}(s)|^{n}\big]=\sqrt{2^{n}\pi^{-1}}\Gamma\left(\frac{n+1}{2}\right)|t-s|^{nH}.
\]
\item For $H=1/2$ the process $B^{1/2}$ becomes the standard
Brownian motion (Bm) with covariance kernel $R_{1/2}(t,s)=t\wedge s$.
\item For any $a>0$, the process $\{a^{-H}B^{H}(at),t\in\mathbb{R}^{+}\}$ has the same
distribution as $B^{H}$. In other words, $B^{H}$ is a $H$-self-similar
process.
\item The characteristic function has the form 
\[
C_{\lambda}(t):=\mathbb{E}\left[\exp\left(\mathrm{i}\sum_{k=1}^{n}\lambda_{k}B^{H}(t_{k})\right)\right]=\exp\left(-\frac{1}{2}\big(\lambda,\Sigma_{H}(t)\lambda^{\top}\big)\right),
\] 
where $\lambda=(\lambda_{1},\dots,\lambda_{n})\in\mathbb{R}^{n}$,
$\lambda^{\top}$ is the transpose of $\lambda$, $t_{k}\ge0$, $k=1,\dots,n$,
$\Sigma_{H}(t)=(R_{H}(t_{k},t_{j}))_{k,j=1}^{n}$ is the covariance
matrix and $(\cdot,\cdot)$ is the scalar product on $\mathbb{R}^{n}$.
\item We denote by $\mathbb{P}_H:=\mathbb{P}_{B^H}$ the unique probability measure in $(\mathbb{W},\mathcal{B}(\mathbb{W}))$ such that the canonical process
\[
W(t)(w):=w(t),\quad\forall t\ge0,\;w\in\mathbb{W},
\]
is a fractional Brownian motion
which we denote by $W^{H}$. For $H=1/2$, $\mathbb{P}_{1/2}$ becomes the Wiener measure. 
\end{enumerate}
\end{rem}
The integral representation of the covariance kernel $R_{H}$ plays
an important role in the construction of the Cameron--Martin space introduced in the following. We state this representation of $R_{H}$ in the following lemma.
\begin{lem}[cf.~\cite{DU99} Lemma 3.1]
The covariance kernel $R_{H}$ (also called reproducing kernel Hilbert
space (RKHS)) has the following integral representation
\begin{equation}%see \cite{Belly1997}
R_{H}(t,s)=\int_{0}^{t\wedge s}K_{H}(t,r)K_{H}(s,r)\,\mathrm{d}r,\quad t,s\ge0.\label{eq:cov-integral-kernel}
\end{equation}
The corresponding kernel $K_{H}$ is given by 
\begin{enumerate}
\item  For $H=1/2$
\begin{equation}\label{eq:kernel-K_1/2}
K_{1/2}(t,r)=\mathbbm{1}_{[0,t]}(r).
\end{equation}
\item  For $H\in (1/2,1)$ (see Lemma 4.3 in \cite{Decreusefond22})
\begin{equation}\label{eq:kernel-K_H}
K_{H}(t,r)=\frac{r^{1/2-H}}{\Gamma(H-1/2)}\int_{r}^{t}u^{H-1/2}(u-r)^{H-3/2}\mathrm{~d}u\,\mathbbm{1}_{(0,t]}(r).
\end{equation}
\end{enumerate}
\end{lem}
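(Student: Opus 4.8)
The plan is to treat the two cases separately. For $H=1/2$ the identity is immediate: since $K_{1/2}(t,r)=\mathbbm{1}_{[0,t]}(r)$, the right-hand side of \eqref{eq:cov-integral-kernel} equals $\int_0^{t\wedge s}\mathbbm{1}_{[0,t]}(r)\mathbbm{1}_{[0,s]}(r)\,\mathrm{d}r=t\wedge s$, which is exactly $R_{1/2}(t,s)$. So the whole content is in the regime $H\in(1/2,1)$, which I would verify by a differentiation argument rather than by evaluating the double integral head-on.

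Set $\Phi(t,s):=\int_0^{t\wedge s}K_H(t,r)K_H(s,r)\,\mathrm{d}r$. Both $\Phi$ and $R_H$ are symmetric and vanish when $t=0$ or $s=0$, so it suffices to fix $s$ and prove $\partial_t\Phi(t,s)=\partial_t R_H(t,s)$ for $t\neq s$. The key simplification is that $K_H(t,t)=0$: as $r\to t$ the inner integral in \eqref{eq:kernel-K_H} is of order $(t-r)^{H-1/2}\to 0$ since $H>1/2$. Consequently the boundary term that the Leibniz rule produces in the case $t<s$ drops out, and in either regime one is left with $\partial_t\Phi(t,s)=\int_0^{t\wedge s}\partial_tK_H(t,r)\,K_H(s,r)\,\mathrm{d}r$, where by the fundamental theorem of calculus $\partial_tK_H(t,r)=\frac{t^{H-1/2}r^{1/2-H}(t-r)^{H-3/2}}{\Gamma(H-1/2)}$ now carries no nested integral.

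Taking $t>s$ (the case $t<s$ being analogous), I would substitute the explicit form of $K_H(s,r)$, apply Fubini's theorem to interchange the $r$- and $v$-integrations over the region $0<r<v<s$, and evaluate the inner integral $\int_0^v r^{1-2H}(t-r)^{H-3/2}(v-r)^{H-3/2}\,\mathrm{d}r$ by the substitution $r=v\rho$. This puts it into Euler's integral representation of ${}_2F_1$; the decisive point is that the resulting parameters satisfy $a=c=\tfrac32-H$, so that ${}_2F_1(a,b;a;z)=(1-z)^{-b}$ collapses the hypergeometric factor to the elementary $(1-v/t)^{2H-2}$. After the remaining elementary $v$-integration one obtains $\partial_t\Phi(t,s)=C_H\,(t^{2H-1}-(t-s)^{2H-1})$ with an explicit constant $C_H$ built from Beta and Gamma factors, to be matched against $\partial_t R_H(t,s)=H\,(t^{2H-1}-(t-s)^{2H-1})$.

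I expect the main obstacle to be twofold. First, the careful handling of the integrable endpoint singularities: the exponents $\tfrac12-H$, $H-\tfrac32$ and $1-2H$ all lie in $(-1,0)$, so Fubini and the differentiation under the integral sign must be justified by absolute integrability, which holds \emph{precisely} because $\tfrac12<H<1$. Second, and more delicate, reducing $C_H$ to $H$: this amounts to a Gamma-function identity that I would establish via the reflection formula $\Gamma(z)\Gamma(1-z)=\pi/\sin(\pi z)$ applied to the complementary pairs $(H-\tfrac12,\tfrac32-H)$ and $(2H,1-2H)$, together with $\Gamma(z+1)=z\Gamma(z)$. Getting this multiplicative constant to come out exactly equal to $H$ is the crux of the argument and is what fixes the correct normalization of the kernel in \eqref{eq:kernel-K_H}.
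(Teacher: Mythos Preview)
The paper does not prove this lemma; it is quoted from the literature (the header ``cf.~\cite{DU99} Lemma~3.1'' and the pointer to \cite{Decreusefond22} are the entire argument), so there is no in-paper proof to compare against. Your strategy---differentiate in $t$, use $K_H(t,t)=0$ to kill the Leibniz boundary term, Fubini, and collapse the Euler integral via ${}_2F_1(a,b;a;z)=(1-z)^{-b}$---is the standard direct verification and is sound in outline.

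The place to be cautious is exactly where you flag it: the constant. If one runs your computation with the prefactor $1/\Gamma(H-\tfrac12)$ printed in \eqref{eq:kernel-K_H}, the reflection/recursion manipulations give
\[
C_H \;=\; \frac{\Gamma(2-2H)}{(2H-1)\,\Gamma(H-\tfrac12)\,\Gamma(\tfrac32-H)}
\;=\;\frac{1}{2\sin(\pi H)\,\Gamma(2H)},
\]
which equals $H$ only in the limit $H\to\tfrac12^+$ (for instance $C_{3/4}\approx 0.798$). Equivalently, your argument reduces \eqref{eq:cov-integral-kernel} to the requirement $c_H^{\,2}\,B(2-2H,H-\tfrac12)=H(2H-1)$ on the prefactor $c_H$, which is precisely the normalization
\[
c_H=\Bigl[\tfrac{H(2H-1)}{B(2-2H,H-1/2)}\Bigr]^{1/2}
\]
used in \cite{DU99}, and this does \emph{not} coincide with $1/\Gamma(H-\tfrac12)$. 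So your method is correct and in fact pins down the right constant; the step you call ``the crux'' will not close with the coefficient exactly as written in the statement, and you should either work with the \cite{DU99} normalization or note that \eqref{eq:kernel-K_H} needs that correction.
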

\subsection{Wiener Integral with Respect to Fractional Brownian Motion}
\label{subsec:Wiener-int-fBm}
Now we introduce the Cameron--Martin space of the fBm with $H\ge 1/2$. It plays an important role in defining the Wiener integral w.r.t.~fBm and the Cameron--Martin theorem below; more details can be found in \cite{Barton1988}, \cite{Decreusefond22} and the references therein.

\begin{defn}[Cameron-Martin space of the fBm]
Let $\mathfrak{R}_H$ be the vector space spanned by the covariance kernel $R_H$, that is,
\[
\mathfrak{R}_H:=\mathrm{span}\{R_H(t,\cdot)\mid t\ge0\}
\]
equipped with the scalar product
\begin{equation}\label{eq:scalarproductH0}
\big(R_H(t\,\cdot),R_H(s,\cdot)\big)_{\mathfrak{R}_H}:=R_H(t,s).   
\end{equation}
The Cameron--Martin space of the fBm with Hurst index $H$, denoted by $\mathcal{H}_H$, is the completion of $\mathfrak{R}_H$ with respect to the norm associated with the scalar product \eqref{eq:scalarproductH0}.

\end{defn}

\begin{rem}
The Cameron--Martin space, as defined above, is not very practical. It is possible to have a more suitable characterization of $\mathcal{H}_H$, see Theorem~3.3 in \cite{DU99} (in the special case of the time interval $[0,1]$) or Theorem~3.1 in \cite{Barton1988} for a general time interval. 
\end{rem}
Therefore, our goal is to obtain a more convenient description of the Cameron--Martin space $\mathcal{H}_H$.
To this end, we first introduce the linear operator $\mathcal{K}_H$ 
associated with the kernel $K_H$. More precisely, for every $f\in L^2(\lambda)$, we define
\[
(\mathcal{K}_Hf)(t) :=\int_0^tK_H(t,s)f(s)\,\mathrm{d}s.
\]
Note that using \eqref{eq:cov-integral-kernel}, the covariance kernel $R_H$ is the image of $K_H(t,\cdot )$ under $\mathcal{K}_{H}$, that is,
\begin{equation}\label{eq:CalK-K-equal-R}
\mathcal{K}_{H}(K_{H}(t,\cdot))(s)=R_H(t,s),\quad t,s\ge0.    
\end{equation}
Hence $K_H(t,\cdot)\in L^2(\lambda)$ for any $t\ge0$, indeed
\[
\|K_H(t,\cdot)\|_2^2=R_H(t,t)=t^{2H},
\]
ensuring that the operator $\mathcal{K}_{H}$ is well-defined on $L^{2}(\lambda)$ for every $t\ge0$.
This implies that 
\begin{equation}\label{eq:KK-R}
    \mathcal{K}_H(\mathfrak{K}_H)=\mathfrak{R}_H,
\end{equation}
where $\mathfrak{K}_H$ is the vector space spanned by the kernel $K_H(t,\cdot)$, that is,
\[
\mathfrak{K}_H:=\mathrm{span}\{K_H(t,\cdot)\mid t\ge0\}.
\]
Using the explicit forms of $K_H$ in \eqref{eq:kernel-K_1/2}, \eqref{eq:kernel-K_H} and applying the Fubini theorem (when $H>1/2$), $\mathcal{K}_Hf$ can be expressed as follows, with $t\ge0$
\begin{subnumcases}{(\mathcal{K}_Hf)(t)=} 
\displaystyle \int_0^tf(s)\,\mathrm{d}s, & $\displaystyle H=\frac{1}{2}$, \label{eq:Kop-Hequal12}\\[.2cm]
    \displaystyle\int_0^t \frac{x^{H-1/2}}{\Gamma(H-1/2)}\int_0^x (x-r)^{H-3/2} r^{1/2-H}f(r)\,\mathrm{d}r\,\mathrm{d}x, & $\displaystyle H>\frac{1}{2}$. \label{eq:Kop-Hplus12}
   \end{subnumcases}
See, for example, \cite{DU99} for \eqref{eq:Kop-Hplus12}.
It is not hard to see from the equalities \eqref{eq:Kop-Hequal12} and \eqref{eq:Kop-Hplus12} that 
\begin{equation}\label{eq:KH-one-to-one}
    \forall t\ge0,\; \int_0^tK_H(t,s)f(s)\,\mathrm{d}s=0\;\Longrightarrow\;f=0,\; \mathrm{a.e.}
\end{equation}
meaning that the operator $\mathcal{K}_H$ is one-to-one and $\mathfrak{K}_H$ is dense in $L^2(\lambda)$ for $H\ge 1/2$. As a consequence, $\mathcal{K}_H:L^2(\lambda)\longrightarrow\mathcal{K}_H\big(L^2(\lambda)\big)$ is a bijective isometry when $\mathcal{K}_H\big(L^2(\lambda)\big)$ is provided with the scalar product
\[
(\mathcal{K}_H h,\mathcal{K}_H g)_{\mathcal{K}_H(L^2(\lambda))}:=(h,g)_2,\quad h, g\in L^2(\lambda).
\]
On the other hand, \eqref{eq:KK-R} and \eqref{eq:KH-one-to-one} imply that  
\[\mathcal{K}_H:\big(\mathfrak{K}_H,(\cdot,\cdot)_2\big)\longrightarrow \big(\mathfrak{R}_H,(\cdot,\cdot)_{\mathfrak{R}_H}\big)
\]
is a bijective isometry. By a density argument, $\mathcal{K}_H$ is extended to an isomorphism between the Hilbert spaces $L^2(\lambda)$ and $\mathcal{H}_H$, that is,
\[
\tilde{\mathcal{K}}_H:\big(L^2(\lambda),(\cdot,\cdot)_2\big)\longrightarrow \big(\mathcal{H}_H,(\cdot,\cdot)_{\mathcal{H}_H}\big).
\]
The commutative diagram in Fig.~\ref{diagram} summarizes the above considerations and enhances the understanding of the identification of the Cameron--Martin space.

\begin{figure}
\begin{center}
\begin{tikzcd}[sep=huge]
\big(\mathfrak{K}_{H},\|\cdot\|_{2}\big)\ar[r, hook,two heads,"i",line width=.8pt]\ar[d,"\mathcal{K}_H"',line width=.8pt,shift right=.8em]& 
\big(L^2(\lambda),\|\cdot\|_2\big)
\ar[r,"\mathcal{K}_H",line width=.8pt] 
\dar[shift left=-.8em,line width=.8pt,"\mathcal{\tilde{K}}_H"']&
\big(\mathcal{K}_H \big(L^2(\lambda)\big),\|\cdot\|_{\mathcal{K}_H( L^2(\lambda))}\big)
\ar[ld,red,shift right=-.7em,line width=.8pt]\lar[shift left=.7em,line width=.8pt]\\
\big(\mathfrak{R}_{H},\|\cdot\|_{\mathfrak{R}_{H}}\big)
\rar[two heads,line width=.8pt,"\mathrm{Completion\;\;}"']
\uar[line width=.8pt] & 
\big(\mathcal{H}_H,\|\cdot\|_{\mathcal{H}_H}\big)\quad
\uar[line width=.8pt]
\urar[red,"\mathcal{K}_H\circ\mathcal{\tilde{K}}_H^{-1}" black,line width=.8pt]
\end{tikzcd}    
\end{center}
\caption{\label{diagram}Diagram with the identification of the Cameron--Martin space. An arrow with a hook means that the map is one-to-one. A double head indicates that the map is onto or its range is dense. Two arrows pointing in opposite directions means that the map is an isometric isomorphism. $\mathcal{\tilde{K}}_H$ is the extension of $\mathcal{K}_H$ to the complete space $L^2(\lambda)$.}
\end{figure}
We are ready to state the following useful characterization of the Cameron--Martin space.

\begin{thm}[Characterization of $\mathcal{H}_H$]
\label{thm:Cameron-martin-space-fBm}
 The Cameron--Martin space $\mathcal{H}_H$ can be identified with $\mathcal{K}_H(L^2(\lambda))$, the space of functions $h=\mathcal{K}_H\dot{h}$, $\dot{h}\in L^2(\lambda)$, given by
\begin{equation}\label{eq:h-rep-fBm}
h(t)=(\mathcal{K}_H\dot{h})(t)=\int_0^tK_H(t,s)\dot{h}(s)\,\mathrm{d}s,\quad t\ge0,    
\end{equation}
equipped with the inner product $(\cdot,\cdot)_{\mathcal{K}_H(L^2(\lambda))}$.
\end{thm}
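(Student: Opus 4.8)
The plan is to exploit the commutative diagram in Figure~\ref{diagram} together with the uniqueness of the completion of a pre-Hilbert space, thereby avoiding any explicit manipulation of abstract Cauchy sequences. The essential idea is to show directly that $\mathcal{K}_H(L^2(\lambda))$, equipped with the inner product $(\cdot,\cdot)_{\mathcal{K}_H(L^2(\lambda))}$, is a Hilbert space that contains $\mathfrak{R}_H$ as a dense subspace and restricts to $(\cdot,\cdot)_{\mathfrak{R}_H}$ on it. By uniqueness of the completion, this space must then coincide, up to the canonical isometric isomorphism, with $\mathcal{H}_H$.

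First I would record that $\mathcal{K}_H(L^2(\lambda))$ is a genuine function space: since $\|K_H(t,\cdot)\|_2^2=t^{2H}<\infty$, the Cauchy--Schwarz inequality shows that for every $\dot{h}\in L^2(\lambda)$ the quantity $h(t)=(K_H(t,\cdot),\dot{h})_2=\int_0^tK_H(t,s)\dot{h}(s)\,\mathrm{d}s$ is finite for each $t\ge0$, so \eqref{eq:h-rep-fBm} defines a bona fide function. Next, the injectivity of $\mathcal{K}_H$ established in \eqref{eq:KH-one-to-one} guarantees that the inner product $(\mathcal{K}_H h,\mathcal{K}_H g)_{\mathcal{K}_H(L^2(\lambda))}:=(h,g)_2$ is well defined and non-degenerate, and turns $\mathcal{K}_H\colon L^2(\lambda)\to\mathcal{K}_H(L^2(\lambda))$ into a bijective linear isometry. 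Completeness of $L^2(\lambda)$ is therefore transported to $\mathcal{K}_H(L^2(\lambda))$, making the latter a Hilbert space.

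The second step is to place $\mathfrak{R}_H$ densely inside $\mathcal{K}_H(L^2(\lambda))$ with the correct inner product. The inclusion is immediate from \eqref{eq:KK-R}, which gives $\mathfrak{R}_H=\mathcal{K}_H(\mathfrak{K}_H)\subseteq\mathcal{K}_H(L^2(\lambda))$. Density follows because $\mathfrak{K}_H$ is dense in $L^2(\lambda)$ by \eqref{eq:KH-one-to-one} and $\mathcal{K}_H$ is an isometry, so it carries the dense set $\mathfrak{K}_H$ onto the dense set $\mathfrak{R}_H$. Finally, using $R_H(t,\cdot)=\mathcal{K}_H(K_H(t,\cdot))$ from \eqref{eq:CalK-K-equal-R} together with the integral representation \eqref{eq:cov-integral-kernel}, I would compute
\[
\big(R_H(t,\cdot),R_H(s,\cdot)\big)_{\mathcal{K}_H(L^2(\lambda))}=\big(K_H(t,\cdot),K_H(s,\cdot)\big)_2=R_H(t,s)=\big(R_H(t,\cdot),R_H(s,\cdot)\big)_{\mathfrak{R}_H},
\]
which shows the two inner products agree on the spanning elements and hence, by bilinearity, on all of $\mathfrak{R}_H$.

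With these facts in hand the conclusion is purely formal: $\mathcal{K}_H(L^2(\lambda))$ is a complete inner product space containing $\big(\mathfrak{R}_H,(\cdot,\cdot)_{\mathfrak{R}_H}\big)$ as a dense subspace, so by uniqueness of the completion it is isometrically isomorphic to $\mathcal{H}_H$, which is the completion of $\mathfrak{R}_H$ by definition; composing isometries yields the explicit identification $\mathcal{K}_H\circ\tilde{\mathcal{K}}_H^{-1}\colon\mathcal{H}_H\to\mathcal{K}_H(L^2(\lambda))$ drawn in red in Figure~\ref{diagram}. I expect the only genuinely delicate point to be the verification that no information is lost in passing to the abstract completion, that is, that every abstract limit in $\mathcal{H}_H$ is realized by an honest function of the form \eqref{eq:h-rep-fBm}; this is exactly what the completeness of $\mathcal{K}_H(L^2(\lambda))$, resting on the injectivity \eqref{eq:KH-one-to-one}, secures.
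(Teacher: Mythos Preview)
Your proposal is correct and follows essentially the same approach as the paper: the paper does not give a separate formal proof of this theorem but instead develops the argument in the discussion and diagram preceding the statement, establishing injectivity of $\mathcal{K}_H$ via \eqref{eq:KH-one-to-one}, density of $\mathfrak{K}_H$ in $L^2(\lambda)$, the bijective isometry $\mathcal{K}_H\colon(\mathfrak{K}_H,\|\cdot\|_2)\to(\mathfrak{R}_H,\|\cdot\|_{\mathfrak{R}_H})$, and then extending by density to $\tilde{\mathcal{K}}_H$. Your write-up simply reorganizes this material around the uniqueness-of-completion principle, which is an equivalent phrasing of the same density-and-isometry argument summarized in Figure~\ref{diagram}.
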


\begin{rem}
\label{rem:Cameron-Martin-space-fBm}
\begin{enumerate}

\item With the identification given in \eqref{eq:h-rep-fBm}, for every $h\in \mathcal{H}_H$ the function $\dot{h}\in L^2(\lambda)$ is given by $\dot{h}:=\mathcal{K}_H^{-1}h$.  

\item In the special case $H=1/2$, $\mathcal{H}_{1/2}$
is the well-known space $AC(\mathbb{R}^+,\mathbb{R})$ of absolutely continuous functions, vanishing at $0$, whose derivative belongs to $L^{2}(\lambda)$. The representation \eqref{eq:h-rep-fBm} takes the form 
\begin{equation}\label{eq:1/2-rep}
   h(t)=\int_{0}^{t}\,\dot{h}(s)\,\mathrm{d}s,\quad   t\geq 0, 
\end{equation}
where $\dot{h}$ is the derivative in the sense of distributions of $h$.
\end{enumerate}
\end{rem}

We introduce the Wiener integral w.r.t.~$B^H$, $H>1/2$, following the approach in \cite{DU99, Decreusefond22, Barton1988} for which we address the interested reader for more details.

\begin{defn}[Wiener integral]
\label{def:Wiener-integral-fBm}The Wiener integral w.r.t.~fBm is defined  
as the extension to $\mathcal{H}_H$ of the isometry 
\[
\delta_{H}:\mathcal{K}_{H}(\mathfrak{K}_{H})\longrightarrow L^{2}(\mathbb{P})
\]
 defined by 
\[
R_H(t,\cdot)=\mathcal{K}_{H}(K_{H}(t,\cdot))\mapsto\delta_{H}\big(\mathcal{K}_{H}(K_{H}(t,\cdot))\big):=B^{H}(t),\quad \forall t\ge0.
\]
By linearity we have
\[
\delta_H\left(\sum_{i=1}^n a_i\mathcal{K}_H(K_H(t_i,\cdot))\right)= \sum_{i=1}^n a_i B^{H}(t_i),\quad a_i\in\mathbb{R},\; i=1,\dots,n.
\]
In general, for every (deterministic) function $h\in \mathcal{H}_H$, there exists a sequence $h_n:=\mathcal{K}_H(\dot{h}_n)$, $\dot{h}_n\in \mathfrak{K}_H$, $n\in \mathbb{N}$, such that $h=\mathcal{H}_H\mbox{-}\lim_{n\to\infty}h_n$ which leads to the Wiener integral w.r.t.~fBm of $h$ 
\[
\int_0^\infty h(t)\,\mathrm{d}B^H(t):=\delta_H(h):=L^2(\mathbb{P})\mbox{-}\lim_{n\to\infty}\delta_H(h_n)=L^2(\mathbb{P})\mbox{-}\lim_{n\to\infty}\delta_H(\mathcal{K}_H(\dot{h}_n)).
\]
Therefore, $\delta_H(h)$ is a centered Gaussian random variable with variance equal to $\|h\|_{\mathcal{H}_H}^{2}$.
\end{defn}
%:=\int_0^t \mathcal{K}_{H}(K_{H}(t,s))\,\mathrm{d}B^H(s):=\int_0^tK_H(t,s)\,\mathrm{d}B(s)
\begin{rem}\label{rem:Wiener-integral} It is easy to conclude from the definition of Wiener integral that:
\begin{enumerate}
    \item The process 
\begin{equation}
B:=\{\delta_{H}(\mathcal{K}_{H}(\mathbbm{1}_{[0,t]}))\mid t\ge0\}\label{eq:Bm-associated-fBm}
\end{equation}
 is a standard Brownian motion and 
\[
\delta_{H}(\mathcal{K}_{H}(u))=\int_{0}^{\infty}u(s)\,\mathrm{d}B(s),\quad u\in L^2(\lambda),
\]
where the integral on the right side is taken in the Wiener--It{\^o}
sense. In particular, we have
\begin{equation}
B^{H}(t)=\int_{0}^{t}K_{H}(t,s)\,\mathrm{d}B(s).\label{eq:repr-fBm-Bm}
\end{equation}
    \item Using the Remark~\ref{rem:Cameron-Martin-space-fBm}-1 we have, for every
$h\in\mathcal{H}_{H}$
\begin{equation}\label{eq:Wiener-integral-fBm-Bm}
\int_0^\infty h(t)\,\mathrm{d}B^H(t)=\int_{0}^{\infty}(\mathcal{K}_H^{-1}h)(t)\,\mathrm{d}B(t)=\int_{0}^{\infty}\dot{h}(t)\,\mathrm{d}B(t).    
\end{equation}
For more details, see \cite[Lemma~4.4]{Decreusefond22}.
\end{enumerate}
\end{rem}

Now we are ready to state the well-known Cameron--Martin theorem for fBm, cf.~\cite[Theorem~4.10]{Decreusefond22} or \cite[Theorem 4.1]{DU99}.

\begin{thm}
\label{thm:Cameron-Martin-thm}For every $h\in\mathcal{H}_{H}$, $H>1/2$, and
any bounded measurable functional $F:\mathbb{W}\longrightarrow\mathbb{R}$ we have
\begin{eqnarray}\label{eq:Cameron-Martin-fBm}
\mathbb{E}[F(B^{H}+h)] &=&\mathbb{E}\left[F(B^{H})\exp\left(\int_0^\infty h(t)\,\mathrm{d}B^H(t)-\frac{1}{2}\|h\|_{\mathcal{H}_{H}}^{2}\right)\right] \nonumber\\
&\overset{\eqref{eq:Wiener-integral-fBm-Bm}}{=}&\mathbb{E}\left[F(B^{H})\exp\left(\int_{0}^{\infty}\dot{h}(s)\,\mathrm{d}B(s)-\frac{1}{2}\|\dot{h}\|_{2}^{2}\right)\right].
\end{eqnarray}
\end{thm}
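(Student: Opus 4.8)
The plan is to reduce the statement to the classical Cameron--Martin theorem for the standard Brownian motion $B$ of \eqref{eq:Bm-associated-fBm}, exploiting the fact that the fBm is a \emph{deterministic} linear image of $B$ through the kernel $K_H$. The guiding principle is that translating the underlying $B$ by the shift associated with $\dot{h}=\mathcal{K}_H^{-1}h$ translates the fBm $B^H$ by $h=\mathcal{K}_H\dot{h}$.

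First I would record the pathwise identity on which everything rests. By \eqref{eq:repr-fBm-Bm} we have $B^H(t)=\int_0^t K_H(t,s)\,\mathrm{d}B(s)$, so there is a measurable map $\Phi\colon\mathbb{W}\to\mathbb{W}$, $\Phi(w)(t):=\int_0^t K_H(t,s)\,\mathrm{d}w(s)$, defined $\mathbb{P}_{1/2}$-a.s. as the associated Wiener--It\^o integral (in its continuous modification), with $B^H=\Phi(B)$ a.s. Given $h\in\mathcal{H}_H$, set $\dot{h}:=\mathcal{K}_H^{-1}h\in L^2(\lambda)$ and let $\eta:=\int_0^\cdot\dot{h}(s)\,\mathrm{d}s$ be the corresponding Brownian shift. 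Since $\eta$ is absolutely continuous, the integral of $K_H(t,\cdot)$ against $\mathrm{d}\eta$ is an ordinary Lebesgue integral and, by \eqref{eq:h-rep-fBm}, equals $h(t)$; hence by linearity $\Phi(B+\eta)=\Phi(B)+h=B^H+h$ a.s.

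Then I would apply the classical Cameron--Martin theorem for $B$ (the $H=1/2$ instance) to the bounded measurable functional $\Psi:=F\circ\Phi$ and the shift $\eta$ associated with $\dot{h}\in L^2(\lambda)$:
\[
\mathbb{E}[\Psi(B+\eta)]=\mathbb{E}\Big[\Psi(B)\exp\Big(\int_0^\infty\dot{h}(s)\,\mathrm{d}B(s)-\frac{1}{2}\|\dot{h}\|_2^2\Big)\Big].
\]
Here $\Psi(B)=F(B^H)$, while the previous step gives $\Psi(B+\eta)=F(B^H+h)$, so the left-hand side is exactly $\mathbb{E}[F(B^H+h)]$. To conclude I rewrite the exponent in intrinsic fBm quantities: $\int_0^\infty\dot{h}\,\mathrm{d}B=\int_0^\infty h\,\mathrm{d}B^H$ by \eqref{eq:Wiener-integral-fBm-Bm}, and $\|\dot{h}\|_2=\|h\|_{\mathcal{H}_H}$ by the isometry of Theorem~\ref{thm:Cameron-martin-space-fBm}, which yields \eqref{eq:Cameron-Martin-fBm}.

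The main obstacle is the a.s. pathwise identity $\Phi(B+\eta)=B^H+h$. Because $\Phi$ is only defined $\mathbb{P}_{1/2}$-a.s. as an $L^2$-limit, one must justify evaluating the same a.s.-defined version at the translated path $B+\eta$; this is legitimate precisely because the classical theorem already furnishes that the law of $B+\eta$ is absolutely continuous with respect to that of $B$ with a strictly positive density, so $\mathbb{P}_{1/2}$-a.s. statements transfer to the translated measure. As an independent consistency check one can verify the claim directly on the measure-determining class $F(w)=\exp\big(\mathrm{i}\sum_k\lambda_k w(t_k)\big)$: a standard Gaussian computation reduces both sides to $\exp\big(\mathrm{i}\sum_k\lambda_k h(t_k)-\frac{1}{2}(\lambda,\Sigma_H\lambda)\big)$, using the reproducing property $(R_H(t_k,\cdot),h)_{\mathcal{H}_H}=h(t_k)$ for the cross-covariances and $\mathrm{Var}(\delta_H(h))=\|h\|_{\mathcal{H}_H}^2$.
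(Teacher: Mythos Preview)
The paper does not supply its own proof of this theorem; it merely quotes the result as ``well-known'' and refers to \cite[Theorem~4.10]{Decreusefond22} and \cite[Theorem~4.1]{DU99}. Your argument is correct and is precisely the standard route taken in those references: use the Volterra representation \eqref{eq:repr-fBm-Bm} to write $B^H=\Phi(B)$, observe that the Brownian shift by $\eta=\int_0^\cdot \dot{h}\,\mathrm{d}s$ induces the fBm shift by $h=\mathcal{K}_H\dot{h}$, apply the classical Cameron--Martin theorem for $B$, and then rewrite the exponent via the isometry $\|\dot{h}\|_2=\|h\|_{\mathcal{H}_H}$ and \eqref{eq:Wiener-integral-fBm-Bm}. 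Your handling of the one genuine subtlety---that $\Phi$ is only defined $\mathbb{P}_{1/2}$-a.s.\ yet must be evaluated on the translated path---is also the standard fix (absolute continuity of the shifted Wiener measure transfers null sets), so nothing is missing.
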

\begin{rem}\label{rem:exp-martinal-infty}
Since for every $h=\mathcal{K}_H(\dot{h})\in \mathcal{H}_H$ with $\dot{h}\in L^2(\lambda)$, the exponential martingale $\left\{\mathcal{E}\left(\displaystyle{\int_{0}^{\cdot}}\dot{h}(s)\,\mathrm{d}B(s)\right)(t),t\geq 0\right\}$ is uniformly integrable, therefore 
\[
\mathcal{E}\left(\displaystyle{\int_{0}^{\cdot}}\dot{h}(s)\,\mathrm{d}B(s)\right)(\infty)=\exp\left(\int_{0}^{\infty}\dot{h}(t)\,\mathrm{d}B(t)-\frac{1}{2}\int_{0}^{\infty}|\dot{h}(t)|^{2}\,\mathrm{d}t\right),
\] 
is well-defined. Thus, formula \eqref{eq:Cameron-Martin-fBm} can be rewritten as follows
\begin{equation}\label{eq:CMF-fBm}
\mathbb{E}[F(B^{H}+h)] =\mathbb{E}\left[F(B^{H})\mathcal{E}\left(\displaystyle{\int_{0}^{\cdot}}\dot{h}(s)\,\mathrm{d}B(s)\right)(\infty)\right].
\end{equation}
\end{rem}
\begin{rem}\label{rem:RKHS-finite-time}
    The above construction may also be realized if we consider a finite time interval $[0,T]$, $T>0$. That is,
    \begin{enumerate}
        \item the Cameron--Martin space $\mathcal{H}_{H}(T)$ is identified as the Hilbert space $\mathcal{K}_{H}\big(L^{2}([0,T],\lambda)\big)$ with the scalar product: 
        \[
        (h,g)_{\mathcal{H}_H(T)}=(\mathcal{K}_H \dot{h},\mathcal{K}_H \dot{g})_{\mathcal{K}_H(L^2([0,T],\lambda))}:=(\dot{h},\dot{g})_{L^2([0,T],\lambda)},
        \]
        where $\dot{h}, \dot{g}\in L^2([0,T],\lambda)$,
        \item the Wiener integral of $h\in \mathcal{H}_H(T)$: 
        \[
        \displaystyle{\int_0^T} h(s)\,\mathrm{d}B^H(s):=\displaystyle{\int_0^T} \dot{h}(s)\,\mathrm{d}B(s),
        \]
        \item the Cameron--Martin theorem says: for every $h\in \mathcal{H}_H(T)$ and any bounded functional $F:\mathbb{W}_T\longrightarrow\mathbb{R}$ we have
        \begin{eqnarray}
        \mathbb{E}[F(B^{H}+h)]&=&\mathbb{E}\left[F(B^{H})\exp\left(\int_{0}^{T}{h}(s)\,\mathrm{d}B^H(s)-\frac{1}{2}\|h\|^2_{\mathcal{H}_H(T)}\right)\right] \nonumber\\
        &=&\mathbb{E}\left[F(B^{H}) \mathcal{E}\left(\int_{0}^{.}\dot{h}(s)\,\mathrm{d}B(s)\right)\!(T)\right],\label{eq:CMF-fBm-0T}
        \end{eqnarray}
where $\mathcal{E}\left(\displaystyle{\int_{0}^{.}}\dot{h}(s)\,\mathrm{d}B(s)\right)$
is the exponential martingale associated with the martingale $\left\{ \displaystyle{\int_{0}^{t}}\dot{h}(s)\,\mathrm{d}B(s),t\in [0,T]\right\} $, that is, 
\[
\mathcal{E}\left(\int_{0}^{.}\dot{h}(s)\,\mathrm{d}B(s)\right)(t)=\exp\left(\int_{0}^{t}\dot{h}(s)\,\mathrm{d}B(s)-\frac{1}{2}\int_{0}^{t}|\dot{h}(s)|^{2}\,\mathrm{d}s\right).
\]
\end{enumerate}
   For more details, see, for example, \cite{Decreusefond22} and \cite{Coutin2007}.
\end{rem}
\begin{rem}
For $H=\frac{1}{2}$, it follows from \eqref{eq:repr-fBm-Bm} that
$B^{\frac{1}{2}}$ and $B$ are the same process. Therefore, to be consistent with the Wiener-It{\^o} integral we identify $\mathcal{H}_{1/2}$  with $L^2(\lambda)$, so in Equation~\eqref{eq:Wiener-integral-fBm-Bm} $\dot{h}$ should be taken as $h$. Hence, Equation~\eqref{eq:Cameron-Martin-fBm} includes the classical Cameron--Martin formula for the Brownian motion, as well as its equivalent forms~\eqref{eq:CMF-fBm-0T} and \eqref{eq:CMF-fBm}.
\end{rem}

\subsection{Generalized Grey Brownian Motion} \label{subsec:ggBm}
\subsubsection{Definition and Representations}
Let $0<\beta<1$ and $1\leq \alpha<2$ be given. 
A continuous stochastic
process defined on $\left(\Omega,\mathcal{F},\mathbb{P}\right)$
is called a ggBm, denoted by $B_{\beta,\alpha}=\{B_{\beta,\alpha}(t),\,t\geq0\}$,
see \cite{Mura_mainardi_09}, if: 
\begin{enumerate}
\item $B_{\beta,\alpha}(0)=0$, $\mathbb{P}$-a.s. 
\item Any collection $\big\{ B_{\beta,\alpha}(t_{1}),\ldots,B_{\beta,\alpha}(t_{n})\big\}$
with $0\leq t_{1}<t_{2}<\ldots<t_{n}<\infty$ has a characteristic function
given, for any $\theta=(\theta_{1},\ldots,\theta_{n})\in\mathbb{R}^{n}$,
by 
\begin{equation}
\mathbb{E}\left(\exp\left(i\sum_{k=1}^{n}\theta_{k}B_{\beta,\alpha}(t_{k})\right)\right)=E_{\beta}\left(-\frac{1}{2}\theta^{\top}\Sigma_{\alpha,n}\theta\right),\label{eq:charact-func-ggBm}
\end{equation}
where 
\[
\Sigma_{\alpha,n}=\big(t_{k}^{\alpha}+t_{j}^{\alpha}-|t_{k}-t_{j}|^{\alpha}\big)_{k,j=1}^{n}
\]
and $E_{\beta}$ is the Mittag-Leffler (entire) function 
\[
E_{\beta}(z)=\sum_{n=0}^{\infty}\frac{z^{n}}{\Gamma(\beta n+1)},\quad z\in\mathbb{C}.
\]
\end{enumerate}

The generalized grey Brownian motion has the following properties: 
\begin{enumerate}
\item For each $t\geq0$, the moments of any order are given by 
\[
\begin{cases}
\mathbb{E}[B_{\beta,\alpha}^{2n+1}(t)] & =0,\\
\noalign{\vskip4pt}\mathbb{E}[B_{\beta,\alpha}^{2n}(t)] & =\frac{(2n)!}{2^{n}\Gamma(\beta n+1)}t^{n\alpha}.
\end{cases}
\]
\item The covariance function has the form 
\begin{equation}
\mathbb{E}[B_{\beta,\alpha}(t)B_{\beta,\alpha}(s)]=\frac{1}{2\Gamma(\beta+1)}\big(t^{\alpha}+s^{\alpha}-|t-s|^{\alpha}\big),\quad t,s\geq0.\label{eq:auto-cv-gBm}
\end{equation}
\item For each $t,s\geq0$, the characteristic function of the increments
is 
\begin{equation}
\mathbb{E}\big[e^{i\theta(B_{\beta,\alpha}(t)-B_{\beta,\alpha}(s))}\big]=E_{\beta}\left(-\frac{\theta^{2}}{2}|t-s|^{\alpha}\right),\quad\theta\in\mathbb{R}.\label{eq:cf_gBm_increments}
\end{equation}
\item The process $B_{\beta,\alpha}$ is non Gaussian, $\alpha/2$-self-similar
with stationary increments.

\item The ggBm is not a semimartingale. Furthermore, $B_{\alpha,\beta}$
cannot be of finite variation in $[0,1]$ and by scaling and stationarity
of the increment on any interval in $\mathbb{R}^+$. 
\end{enumerate}

The ggBm admits different representations in terms of well-known processes. The most common is given in \cite{Mura_Pagnini_08} in the form 
\begin{equation}\label{eq:repr-Mura}
\big\{B_{\beta,\alpha}(t),\,t\geq0\big\}\overset{\mathcal{L}}{=}\big\{\sqrt{Y_{\beta}}B^{\alpha/2}(t),\,t\geq0\big\}.   
\end{equation}
Here, $\overset{\mathcal{L}}{=}$ means equality in law, the nonnegative random variable $Y_\beta$ has density $M_{\beta}$, called the $M$-Wright probability density
function, with the Laplace transform  
\begin{equation}\label{eq:M_wright}
\int_{0}^{\infty}e^{-s\tau}M_{\beta}(\tau)\,d\tau=E_{\beta}(-s),
\end{equation}
and $B^{\alpha/2}$ is a fBm independent of $Y_\beta$.
The generalized moments of the density $M_{\beta}$ of order $\delta>-1$ are finite and are given (cf.~\cite{Mura_Pagnini_08}) by
\begin{equation}\label{eq:Mbeta-moments}
   \int_{0}^{\infty}\tau^{\delta}M_{\beta}(\tau)\,d\tau=\frac{\Gamma(\delta+1)}{\Gamma(\beta\delta+1)}. 
\end{equation}
For our purposes, we give a more suitable representation of $B_{\beta,\alpha}$ as a subordination of fBm, which is essential in what follows. 

\begin{prop}\label{prop:subord-ggBm}The ggBm has the following representation
\begin{equation}\label{eq:repr-our}
\big\{B_{\beta,\alpha}(t),\,t\geq0\big\}\overset{\mathcal{L}}{=}\big\{B^{\alpha/2}(tY_{\beta}^{1/\alpha}),\,t\geq0\big\}.    
\end{equation}    
\end{prop}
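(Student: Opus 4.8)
The plan is to establish the equality in law \eqref{eq:repr-our} by comparing finite-dimensional distributions. Since both sides are continuous processes on $\mathbb{R}^+$ — the right-hand side $t\mapsto B^{\alpha/2}(tY_\beta^{1/\alpha})$ being the composition of the continuous-path fBm $B^{\alpha/2}$ with the continuous time change $t\mapsto tY_\beta^{1/\alpha}$ — their laws on $\mathbb{W}$ are determined by their finite-dimensional characteristic functions. Thus it suffices to fix $n\in\mathbb{N}$, times $0\le t_1<\dots<t_n$ and $\theta=(\theta_1,\dots,\theta_n)\in\mathbb{R}^n$, and to identify the joint characteristic function of $\big(B^{\alpha/2}(t_kY_\beta^{1/\alpha})\big)_{k=1}^n$. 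The engine of the argument is a conditioning on $Y_\beta$, exploiting that $B^{\alpha/2}$ is independent of $Y_\beta$.

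First I would condition on $Y_\beta=\tau$. The decisive observation is the self-similar scaling of the fBm covariance: from \eqref{eq:covariance-kernel-fBm} one has $R_{\alpha/2}(a t_k, a t_j)=a^{\alpha}R_{\alpha/2}(t_k,t_j)$, equivalently the $H$-self-similarity in Remark~\ref{rem:fBm}(4) gives $\{B^{\alpha/2}(a t)\}_t\overset{\mathcal{L}}{=}\{a^{\alpha/2}B^{\alpha/2}(t)\}_t$. Taking $a=\tau^{1/\alpha}$, so that $a^{\alpha/2}=\sqrt{\tau}$, yields the conditional identity in law $\{B^{\alpha/2}(t\tau^{1/\alpha})\}_t\overset{\mathcal{L}}{=}\{\sqrt{\tau}\,B^{\alpha/2}(t)\}_t$. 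Integrating this conditional statement against the law $\mathbb{P}_{Y_\beta}$ shows that the right-hand side of \eqref{eq:repr-our} has the same law as $\{\sqrt{Y_\beta}\,B^{\alpha/2}(t)\}_t$, which is precisely the established representation \eqref{eq:repr-Mura} of the ggBm; this gives the cleanest route and avoids any explicit moment computation.

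Alternatively — and this checks the claim directly against the defining characteristic function \eqref{eq:charact-func-ggBm} — conditionally on $Y_\beta=\tau$ the vector $\big(B^{\alpha/2}(t_k\tau^{1/\alpha})\big)_{k}$ is centered Gaussian with covariance entries $R_{\alpha/2}(t_k\tau^{1/\alpha},t_j\tau^{1/\alpha})=\tau\,R_{\alpha/2}(t_k,t_j)$, so its conditional characteristic function is $\exp\!\big(-\tfrac{\tau}{2}\,\theta^{\top}R_{\alpha/2}\,\theta\big)$, where $R_{\alpha/2}=\big(R_{\alpha/2}(t_k,t_j)\big)_{k,j}$ is a fixed scalar multiple of $\Sigma_{\alpha,n}$. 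Because this exponent is \emph{linear} in $\tau$, integrating against the $M$-Wright density $M_\beta$ and applying the Laplace transform identity \eqref{eq:M_wright} with $s=\tfrac12\theta^{\top}R_{\alpha/2}\theta$ produces exactly a Mittag-Leffler function $E_\beta$ of a negative multiple of $\theta^{\top}\Sigma_{\alpha,n}\theta$, i.e. the characteristic function of the form \eqref{eq:charact-func-ggBm}.

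The step requiring the most care is the self-similar rescaling together with the passage from conditional to unconditional laws: one must verify that the precise match of the Hurst index $\alpha/2$ with the subordinator exponent $1/\alpha$ makes $a=\tau^{1/\alpha}$ produce exactly the factor $\tau$ that renders the Gaussian exponent linear in $\tau$, which is what makes the Laplace transform \eqref{eq:M_wright} applicable and turns the Gaussian conditional law into a Mittag-Leffler one. Tracking the normalizing constants carefully against \eqref{eq:charact-func-ggBm} is the only place where an error could creep in. The remaining points — continuity and measurability of the time-changed process, and the reduction of equality of laws on $\mathbb{W}$ to equality of finite-dimensional characteristic functions — are routine and I would dispatch them briefly.
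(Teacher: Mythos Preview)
Your proposal is correct and follows essentially the same route as the paper: condition on $Y_\beta=\tau$, invoke the $H$-self-similarity of $B^{\alpha/2}$ with $a=\tau^{1/\alpha}$ to obtain the factor $\sqrt{\tau}$, and thereby reduce \eqref{eq:repr-our} to the known representation \eqref{eq:repr-Mura} at the level of finite-dimensional characteristic functions. Your second, direct computation via the Laplace transform \eqref{eq:M_wright} to recover \eqref{eq:charact-func-ggBm} is a valid alternative check that the paper does not spell out.
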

\begin{proof}
We only need to show that the representations \eqref{eq:repr-Mura} and \eqref{eq:repr-our} have the same finite-dimensional distribution. For every $\theta=(\theta_1,\dots,\theta_n)\in\mathbb{R}^n$, we have
\begin{eqnarray*}
&&\mathbb{E}\left[\exp\left(\mathrm{i}\sum_{k=1}^{n}\theta_{k}B^{\alpha/2}(t_{k}Y_\beta^{1/\alpha})\right)\right] \\ &=&\int_0^\infty\mathbb{E}\left[\exp\left(\mathrm{i}\sum_{k=1}^{n}\theta_{k}B^{\alpha/2}(t_{k}y^{1/\alpha})\right)\right]M_\beta(y)\,\mathrm{d}y\\
&=&\int_0^\infty\mathbb{E}\left[\exp\left(\mathrm{i}\sum_{k=1}^{n}\theta_{k}y^{1/2}B^{\alpha/2}(t_{k})\right)\right]M_\beta(y)\,\mathrm{d}y\\
&=&\mathbb{E}\left[\exp\left(\mathrm{i}\sum_{k=1}^{n}\theta_{k}Y_\beta^{1/2}B^{\alpha/2}(t_{k})\right)\right].dn
\end{eqnarray*}
\end{proof}

\subsubsection{Canonical Realization}
\label{subsec:realization-Bm}
We will distinguish two classes from the family $B_{\beta,\alpha}$ depending on the parameter $\alpha$.

\vspace{0.1cm}

\noindent {\bf Case} $\alpha=1$.

\vspace{0.1cm}

The corresponding class was introduced by Schneider \cite{Schneider90a, Schneider90}. We denote it by $B_\beta:=B_{\beta,1}$, $0<\beta<1$, and call it the grey Brownian motion. It follows from \eqref{eq:repr-our} that $B_\beta$ is realized as the subordination of the Brownian motion by the process $\{tY_\beta, t\ge0\}$.
In the space of continuous functions, this realization is given below. 

First,  recall from Remark~\ref{rem:fBm}-6 that $W^{1/2}$ is a standard Brownian motion on the classical Wiener space $\big(\mathbb{W},\mathcal{B}(\mathbb{W}),\mathbb{P}_{1/2}\big)$.

Second, let $(\mathbb{R}^{+},\mathcal{B}(\mathbb{R}^{+}),\mathbb{P}_{Y_{\beta}})$
be the probability space where $\mathbb{P}_{Y_{\beta}}$ is the law
of the random variable $Y_{\beta}$. Then $Y_{\beta}$ is realized
on $\mathbb{R}^{+}$ as the identity map 
\begin{equation}\label{eq:crealization_Ybeta}
\mathcal{Y}_{\beta}:\mathbb{R}^{+}\longrightarrow\mathbb{R}^{+},\;\tau\mapsto\mathcal{Y}_\beta(\tau):=\tau.
\end{equation}
Since $Y_\beta$ and $B^{1/2}$ are independent, the grey Brownian motion $B_{\beta}$ can be realized as the canonical process $X_\beta$ defined on the product space $\big(\mathbb{W}\times\mathbb{R}^{+},\mathcal{B}(\mathbb{W})\otimes\mathcal{B}(\mathbb{R}^{+}),\mathbb{P}_{1/2}\otimes\mathbb{P}_{Y_{\beta}}\big)$
by 
\begin{equation}\label{eq:crealization_Xbeta}
X_{\beta}(t)(w,\tau):=W^{1/2}(t\mathcal{Y}_{\beta}(\tau))(w)=w(t\tau),\quad t\ge0,\quad w\in\mathbb{W},\;\tau\in\mathbb{R}^{+}.
\end{equation}
We denote its law by $\mu_\beta $.

\noindent {\bf Case} $\alpha \in (1,2]$.

\vspace{0.1cm}
Let $H:=\frac{\alpha}{2}>1/2$ and $W^H$ be the fractional Brownian motion on the space $\big(\mathbb{W},\mathcal{B}(\mathbb{W}),\mathbb{P}_{H}\big)$, see Remark~\ref{rem:fBm}-6. As $Y_{\beta}$
and $B^{H}$ are independent, the generalized grey Brownian motion $B_{\beta,2H}$ can be realized as the canonical process $X_{\beta,2H}$ defined on the product space $\big(\mathbb{W}\times\mathbb{R}^{+},\mathcal{B}(\mathbb{W})\otimes\mathcal{B}(\mathbb{R}^{+}),\mathbb{P}_{H}\otimes\mathbb{P}_{Y_{\beta}}\big)$
by 
\[
X_{\beta,2H}(t)(w,\tau):=W^{H}\big(t\mathcal{Y}_{\beta}^{1/(2H)}(\tau)\big)(w)=w\big(t\tau^{1/(2H)}\big),\; t,\tau\in\mathbb{R}^{+},\; w\in\mathbb{W}.
\]
In this case, the law of $X_{\beta,2H}$ is denoted by $\mu_{\beta,2H} $.

\section{The Cameron--Martin Theorem}
\label{sec:Cameron-Martin-formula}
\subsection{For Grey Brownian Motion}

%\subsection{Cameron-Martin Formula for Grey Brownian Motion}

For every $h\in\mathcal{H}_{1/2}$ we denote by $X_{\beta}^{h}$
the process defined on $\big(\mathbb{W}\times\mathbb{R}^{+},\mathcal{B}(\mathbb{W})\otimes\mathcal{B}(\mathbb{R}^{+}),\mathbb{P}_{1/2}\otimes\mathbb{P}_{Y_{\beta}}\big)$ by
\[
X_{\beta}^{h}(t):=X_\beta(t)+h(t\mathcal{Y}_{\beta})=W^{1/2}(t\mathcal{Y}_{\beta})+h(t\mathcal{Y}_{\beta}),\quad t\ge0,
\]
and its law is represented by $\mu_{\beta}^h$.

For $T>0$ we consider the processes $X_{\beta,T}:=\big\{X_{\beta}(t)\mid t\in[0,T]\big\}$ and $X_{\beta,T}^{h}:=\big\{X_{\beta}^{h}(t)\mid t\in[0,T]\big\}$ and their laws, denoted by $\mu_{\beta,T}$ and $\mu_{\beta,T}^h$, respectively.

The Cameron-Martin theorem for the grey Brownian motion $X_\beta$ is expressed as follows:
\begin{thm}
\label{thm:CM-gBm}
 Let  $h\in\mathcal{H}_{1/2}$ be given. Then we have:
\begin{enumerate}
\item For every $T>0$ the measures $\mu_{\beta,T}^{h}$ and $\mu_{\beta,T}$
are equivalent and the Radon-Nikodym density is given by
\begin{equation}\label{eq:Cameron-Martin-gBm0T}
\frac{\mathrm{d}\mu_{\beta,T}^{h}}{\mathrm{d}\mu_{\beta,T}}=\mathcal{E}\left(\int_{0}^{.}\dot{h}(s)\,\mathrm{d}W^{1/2}(s)\right)\!(T\mathcal{Y}_{\beta}).
\end{equation}
\item The measures $\mu_\beta^h$ and $\mu_\beta$ are equivalent and the Radon-Nikodym density is
\begin{equation}\label{eq:Cameron-Martin-gBm0inf}
\frac{\mathrm{d}{\mu_\beta^h}}{\mathrm{d}{\mu_\beta}}=\mathcal{E}\left(\int_{0}^{.}\dot{h}(s)\,\mathrm{d}W^{1/2}(s)\right)\!(\infty).
\end{equation}
\end{enumerate}
\end{thm}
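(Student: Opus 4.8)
The plan is to establish the expectation identity that characterises the Radon--Nikodym density by conditioning on the second coordinate $\tau\in\mathbb{R}^+$ and reducing each conditional problem to the classical Cameron--Martin theorem for Brownian motion. Throughout I write $B:=W^{1/2}$ for the canonical Brownian motion on $(\mathbb{W},\mathcal{B}(\mathbb{W}),\mathbb{P}_{1/2})$ and, for fixed $\tau>0$, I introduce the time-scaling map $\Phi_{\tau}\colon\mathbb{W}\to\mathbb{W}$, $(\Phi_{\tau}w)(t):=w(t\tau)$. The decisive structural observation is that $\Phi_{\tau}$ is \emph{linear}: since $h\in\mathcal{H}_{1/2}\subset\mathbb{W}$ we have $h(t\mathcal{Y}_{\beta}(\tau))=(\Phi_{\tau}h)(t)$, and therefore
\[
X_{\beta}^{h}(\cdot)(w,\tau)=\Phi_{\tau}(w)+\Phi_{\tau}(h)=\Phi_{\tau}(w+h),\qquad X_{\beta}(\cdot)(w,\tau)=\Phi_{\tau}(w).
\]
Hence, conditionally on $Y_{\beta}=\tau$, shifting the grey Brownian motion by $h(\cdot\,\mathcal{Y}_{\beta})$ amounts to shifting the \emph{underlying} Brownian motion $B$ by $h$ and then applying $\Phi_{\tau}$.

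For part (1), fix a bounded measurable $F\colon\mathbb{W}_{T}\to\mathbb{R}$ and set $G_{\tau}:=F\circ\Phi_{\tau}$. Then $G_{\tau}$ is bounded and $\mathcal{B}(\mathbb{W}_{T\tau})$-measurable, since $\{(\Phi_{\tau}w)(t):t\in[0,T]\}=\{w(s):s\in[0,T\tau]\}$, so it depends on $B$ only through its restriction to $[0,T\tau]$. Using the independence of $B$ and $Y_{\beta}$ together with Fubini, I would write $\mathbb{E}[F(X_{\beta}^{h})]=\int_{\mathbb{R}^+}\mathbb{E}_{\mathbb{P}_{1/2}}[G_{\tau}(B+h)]\,\mathrm{d}\mathbb{P}_{Y_{\beta}}(\tau)$ and apply the $H=1/2$ case of the finite-time Cameron--Martin formula \eqref{eq:CMF-fBm-0T} on the interval $[0,T\tau]$ to $G_{\tau}$, obtaining
\[
\mathbb{E}_{\mathbb{P}_{1/2}}\big[G_{\tau}(B+h)\big]=\mathbb{E}_{\mathbb{P}_{1/2}}\!\left[G_{\tau}(B)\,\mathcal{E}\!\left(\int_{0}^{\cdot}\dot{h}(s)\,\mathrm{d}W^{1/2}(s)\right)\!(T\tau)\right].
\]
Re-expressing $G_{\tau}(B)=F(X_{\beta})$ and $G_{\tau}(B+h)=F(X_{\beta}^{h})$, integrating over $\tau$ against $\mathbb{P}_{Y_{\beta}}$, and using $\mathcal{Y}_{\beta}(\tau)=\tau$ so that $T\tau=T\mathcal{Y}_{\beta}$, yields exactly the density \eqref{eq:Cameron-Martin-gBm0T}. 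Equivalence (not merely one-sided absolute continuity) follows because the exponential martingale is strictly positive $\mathbb{P}_{1/2}\otimes\mathbb{P}_{Y_{\beta}}$-a.s.; running the argument with $-h$ produces the reciprocal density and hence $\mu_{\beta,T}\ll\mu_{\beta,T}^{h}$ as well.

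For part (2) the same scheme applies with $\Phi_{\tau}$ regarded as a map on the full half-line and $F\colon\mathbb{W}\to\mathbb{R}$ bounded measurable. Since $Y_{\beta}>0$ a.s., for every such $\tau$ the scaled time $t\tau$ exhausts $[0,\infty)$, so $G_{\tau}=F\circ\Phi_{\tau}$ now depends on the whole path of $B$. Invoking the well-definedness and uniform integrability of the exponential martingale at infinity from Remark~\ref{rem:exp-martinal-infty} (guaranteed by $\dot{h}\in L^{2}(\lambda)$), the $[0,\infty)$ formula \eqref{eq:CMF-fBm} gives the conditional density $\mathcal{E}(\int_{0}^{\cdot}\dot{h}\,\mathrm{d}W^{1/2})(\infty)$, which is now \emph{independent} of $\tau$; integrating over $\tau$ then produces \eqref{eq:Cameron-Martin-gBm0inf}.

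The measurability of $\tau\mapsto\mathbb{E}_{\mathbb{P}_{1/2}}[G_{\tau}(B+h)]$ and the use of Fubini are routine. The point requiring care is the reduction itself: one must verify that the conditional shift of the underlying Brownian motion is \emph{exactly} $h$, and not some $\tau$-dependent reparametrisation, which is precisely what the linearity of $\Phi_{\tau}$ and the identity $h(t\tau)=(\Phi_{\tau}h)(t)$ deliver, and that the random time horizon $T\tau$ is legitimately handled by conditioning on $\tau$ before invoking the deterministic-time theorem. In part (2) the additional subtlety is the $\tau$-independence of the density, which rests on $Y_{\beta}>0$ a.s.\ combined with the uniform integrability at infinity.
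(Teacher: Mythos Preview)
Your proposal is correct and follows essentially the same approach as the paper: condition on $\tau$, apply the classical Cameron--Martin theorem for Brownian motion on the (now deterministic) interval $[0,T\tau]$ (respectively $[0,\infty)$), and then integrate against $\mathbb{P}_{Y_{\beta}}$. The only cosmetic difference is that the paper first reduces to cylinder functionals $F(w)=f(w(t_{1}),\dots,w(t_{n}))$ via a monotone-class argument, whereas you work directly with a general bounded measurable $F$ and package the time-scaling through the linear map $\Phi_{\tau}$; both routes yield exactly the same conditional identity.
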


\begin{rem}\label{rem:gBm-exp-martingale}
    Before proving the theorem, we will elucidate the two equations \eqref{eq:Cameron-Martin-gBm0T} and \eqref{eq:Cameron-Martin-gBm0inf}.
    \begin{enumerate}
    \item The expectation of $\mathcal{E}\left(\displaystyle{\int_{0}^{.}}\dot{h}(s)\,\mathrm{d}W^{1/2}(s)\right)\!(T\mathcal{Y}_{\beta})$ with respect to $\mathbb{P}_{1/2}\otimes\mathbb{P}_{Y_{\beta}}$ is equal to $1$, that is,
\[
\int_{\mathbb{W}\times\mathbb{R}^{+}}\mathcal{E}\left(\displaystyle{\int_{0}^{.}}\dot{h}(s)\,\mathrm{d}W^{1/2}(s)\right)(T\tau)\,\mathbb{P}_{1/2}(\mathrm{d}w)\,\mathbb{P}_{Y_{\beta}}(\mathrm{d}\tau)=1.
\]
Indeed it follows from Tonelli's theorem and the fact that the expectation of a martingale is constant in time, that is, for any $t\geq 0$ we have
\[
\int_{\mathbb{W}}\mathcal{E}\left(\displaystyle{\int_{0}^{.}}\dot{h}(s)\,\mathrm{d}W^{1/2}(s)\right)(t)\,\mathbb{P}_{1/2}(\mathrm{d}w)=1.
\]  
\item It follows from Remark~\ref{rem:exp-martinal-infty} that the right-hand side of \eqref{eq:Cameron-Martin-gBm0inf} is well-defined. On the other hand, since $\mathbb{P}_{Y_\beta}(\{0\})=0$ we have $\mathbb{P}_{Y_\beta}(\{\infty \mathcal{Y}_\beta=\infty\})=1$, thus the r.h.s.~of \eqref{eq:Cameron-Martin-gBm0inf} implicitly depends on $Y_\beta$ in the following sense:
\[
\mathcal{E}\left(\displaystyle{\int_{0}^{\cdot}}\dot{h}(s)\,\mathrm{d}W^{1/2}(s)\right)(\infty)=\mathcal{E}\left(\displaystyle{\int_{0}^{\cdot}}\dot{h}(s)\,\mathrm{d}W^{1/2}(s)\right)(\infty \mathcal{Y}_\beta),
\]
$\mathbb{P}_{1/2}\otimes\mathbb{P}_{Y_{\beta}}$ almost surely.
    \end{enumerate}

\end{rem}
\begin{proof}
1. Let $T>0$ be given. It is sufficient to prove
\begin{align*}
 & \int_{\mathbb{W}\times\mathbb{R}^{+}}f\big(w(t_{1}\tau)+h(t_{1}\tau),\ldots,w(t_{n}\tau)+h(t_{n}\tau)\big)\,\mathbb{P}_{1/2}(\mathrm{d}w)\,\mathbb{P}_{Y_{\beta}}(\mathrm{d}\tau)\\
 & =\int_{\mathbb{W}\times\mathbb{R}^{+}}f\big(w(t_{1}\tau),\ldots,w(t_{n}\tau)\big)\mathcal{E}\left(\int_{0}^{.}\dot{h}(s)\,\mathrm{d}W^{1/2}(s)\right)\!(T\tau)\,\mathbb{P}_{1/2}(\mathrm{d}w)\,\mathbb{P}_{Y_{\beta}}(\mathrm{d}\tau),
\end{align*}
where $n\in\mathbb{N}$, $0<t_{1}<\dots<t_{n}\le T$ and $f\in C_{b}(\mathbb{R}^{n})$ (the set of continuous bounded functions on $\mathbb{R}^n$), see \cite[Thm.~3.3]{Billingsley1995}.

For every fixed $\tau\in\mathbb{R}^+$, the classical Cameron--Martin formula applied to the bounded measurable functional on $\mathbb{W}_{T\tau}$
\[
w\mapsto f\left(w\left(t_{1}\tau\right),\ldots,w\left(t_{n}\tau\right)\right)
\]
yields 
\[
\begin{aligned} & \int_{\mathbb{W}}f\big(w(t_{1}\tau)+h(t_{1}\tau),\ldots,w(t_{n}\tau)+h(t_{n}\tau)\big)\,\mathbb{P}_{1/2}(\mathrm{d}w)\\
 & =\int_{\mathbb{W}_{T\tau}}f\big(w(t_{1}\tau)+h(t_{1}\tau),\ldots,w(t_{n}\tau)+h(t_{n}\tau)\big)\,\mathbb{P}_{1/2}(\mathrm{d}w)\\
 & =\int_{\mathbb{W}_{T\tau}}f\left(w(t_{1}\tau),\ldots,w(t_{n}\tau)\right)\mathcal{E}\left(\int_{0}^{.}\dot{h}(s)\,\mathrm{d}W^{1/2}(s)\right)\!(T\tau)\,\mathbb{P}_{1/2}(\mathrm{d}w)\\
 & =\int_{\mathbb{W}}f\left(w(t_{1}\tau),\ldots,w(t_{n}\tau)\right)\mathcal{E}\left(\int_{0}^{.}\dot{h}(s)\,\mathrm{d}W^{1/2}(s)\right)\!(T\tau)\,\mathbb{P}_{1/2}(\mathrm{d}w).
\end{aligned}
\] 
Therefore, we may integrate both sides of the above equality w.r.t $\mathbb{P}_{Y_{\beta}}$
to obtain the result.

2. It follows by an easy adaption of the above arguments with $0\le t_{1}<t_{2}<\dots<t_{n}<+\infty$
and the classical Cameron--Martin formula for functionals defined on
$\mathbb{W}$.
\end{proof}

\begin{thm}
\label{thm:equivalent-H}Let $h\in\mathbb{W}$ be given. If $\mu_\beta$
and $\mu_{\beta,h}$ are equivalent, then $h\in\mathcal{H}_{1/2}$.
\end{thm}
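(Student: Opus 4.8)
The plan is to reduce the equivalence of these mixed laws to the classical converse Cameron--Martin theorem for Brownian motion, by exploiting that the subordinator value can be read off pathwise from the quadratic variation. For fixed $\tau>0$ the process $t\mapsto w(t\tau)$ is a Brownian motion with quadratic variation rate $\tau$, so the dyadic quadratic variation functional $Q:\mathbb{W}\to[0,\infty]$ over $[0,1]$ satisfies $Q=\tau$ almost surely under $\nu_\tau$, the law of $w(\cdot\tau)$. Hence $\Phi(w,\tau):=w(\cdot\tau)$ is, off a null set, a Borel isomorphism of $\big(\mathbb{W}\times\mathbb{R}^+,\mathbb{P}_{1/2}\otimes\mathbb{P}_{Y_\beta}\big)$ onto $(\mathbb{W},\mu_\beta)$, with inverse $\Psi(\omega)=\big(\omega(\cdot/Q(\omega)),Q(\omega)\big)$. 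In particular the $\nu_\tau$ are mutually singular and $\mu_\beta=\int_0^\infty\nu_\tau\,\mathbb{P}_{Y_\beta}(\mathrm{d}\tau)$ is precisely the disintegration of $\mu_\beta$ along $Q$.

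Assume now $\mu_\beta\sim\mu_{\beta,h}$. The first, and I expect the hardest, step is to show that $h$ has zero quadratic variation on compact intervals. Infinite quadratic variation is excluded at once: since the square root of the quadratic variation is subadditive along partitions, $[w(\cdot\tau)+h(\cdot\tau)]=\infty$ whenever $[h(\cdot\tau)]=\infty$, forcing $Q=\infty$ $\mu_{\beta,h}$-a.s.\ and contradicting $Q<\infty$ $\mu_\beta$-a.s. If $h$ had finite but nonzero quadratic variation, then, using that the cross variation $[w(\cdot\tau),h(\cdot\tau)]$ vanishes almost surely (its Riemann sums have variance of order the mesh), the canonical process would under $\mu_{\beta,h}$ be a continuous martingale plus a deterministic path of infinite variation, hence not a semimartingale; equivalently, its total quadratic variation rate would strictly exceed its martingale rate $\tau$, whereas under $\mu_\beta$ the two agree. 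This mismatch ought to be detectable by a pathwise functional that separates the two measures, yielding mutual singularity and contradicting equivalence; making this rigorous uniformly in the unknown time change is the main technical obstacle.

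Once $h$ has zero quadratic variation, the shift leaves $Q$ unchanged, so $Q=\tau$ holds $\mu_{\beta,h}$-a.s.\ as well, and $\Phi^h(w,\tau):=(w+h)(\cdot\tau)$ is again an almost-everywhere Borel isomorphism onto $(\mathbb{W},\mu_{\beta,h})$ with the same inverse $\Psi$. Transporting both measures through $\Psi$ and using that $\tau$ is recovered while the map on the first coordinate is $w\mapsto w+h$, one computes $\Psi_*\mu_\beta=\mathbb{P}_{1/2}\otimes\mathbb{P}_{Y_\beta}$ and $\Psi_*\mu_{\beta,h}=(\mathbb{P}_{1/2})_h\otimes\mathbb{P}_{Y_\beta}$, where $(\mathbb{P}_{1/2})_h$ denotes the law of $W^{1/2}+h$. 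Since $\Psi$ is almost-everywhere bijective it preserves equivalence of measures, so $\mu_\beta\sim\mu_{\beta,h}$ becomes $\mathbb{P}_{1/2}\otimes\mathbb{P}_{Y_\beta}\sim(\mathbb{P}_{1/2})_h\otimes\mathbb{P}_{Y_\beta}$.

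Finally, two product measures sharing the common factor $\mathbb{P}_{Y_\beta}$ are equivalent if and only if their remaining factors are, so the displayed equivalence is the same as $\mathbb{P}_{1/2}\sim(\mathbb{P}_{1/2})_h$. The classical converse Cameron--Martin (Feldman--H{\'a}jek) theorem for Brownian motion then forces $h\in\mathcal{H}_{1/2}$, which is the assertion; this is consistent with the zero quadratic variation derived above, since elements of $\mathcal{H}_{1/2}$ are absolutely continuous. As a by-product this yields the exact converse to Theorem~\ref{thm:CM-gBm}.
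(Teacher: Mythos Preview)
Your approach differs substantially from the paper's. The paper argues by a short contrapositive: if $h\notin\mathcal{H}_{1/2}$, then $\mathbb{P}_{W^{1/2}}$ and $\mathbb{P}_{W^{1/2}+h}$ are singular, so one picks $A\subset\mathbb{W}$ with $\mathbb{P}_{W^{1/2}}(A)=1$ and $\mathbb{P}_{W^{1/2}+h}(A)=0$, sets $\tilde A:=\{w(\cdot\tau):w\in A,\ \tau\ge0\}$, and asserts $\mu_\beta(\tilde A)=1$ while $\mu_{\beta,h}(\tilde A)=0$. No quadratic-variation functional, no disintegration, no case split on the regularity of $h$; the work is pushed entirely onto the classical Feldman--H\'ajek dichotomy on Wiener space, and the singularity is then lifted in one stroke.

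Your proposal carries a genuine gap, precisely where you flag it, and the patch you sketch does not close it. You suggest that when $h$ has finite nonzero quadratic variation the shifted canonical process fails to be a semimartingale; but the paper itself lists ``the ggBm is not a semimartingale'' among the basic properties of the class, so at face value this criterion cannot distinguish $\mu_\beta$ from $\mu_{\beta,h}$. More structurally, the object your scheme actually needs is the dyadic quadratic variation on $[0,1]$ of $t\mapsto h(t\tau)$, which samples $h$ on the non-dyadic grid $\{k2^{-n}\tau\}_k$; this is not governed by the dyadic quadratic variation of $h$ itself, and its behaviour as a function of $\tau$ is uncontrolled. Hence the trichotomy you set up (zero / finite nonzero / infinite QV of $h$) is not the correct organizing principle, and the difficulty you call ``the main technical obstacle'' is structural rather than cosmetic. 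Until it is resolved, the reduction to $\mathbb{P}_{1/2}\sim(\mathbb{P}_{1/2})_h$ goes through only under the extra hypothesis that the shift by $h$ preserves $Q$ almost surely---which is close to what has to be proved.
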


\begin{proof}
Let $h\in\mathbb{W}\backslash\mathcal{H}_{1/2}$ be given. It is well known that Gaussian
measures $\mathbb{P}_{W^{1/2}}$ and $\mathbb{P}_{W^{1/2}+h}$ are singular; see
Theorem 2.2 on page 339 in \cite{Revuz-Yor-94} (see also Lemma~3.12 in \cite{MR92}). Hence, there exists
a measurable set $A\subset\mathbb{W}$ such that
\[
\mathbb{P}_{W^{1/2}}(A)=1\qquad\mathrm{and}\qquad\mathbb{P}_{W^{1/2}+h}(A)=0.
\]
Define the subset of $\mathbb{W}$
\[
\tilde{A}:=\{w(\cdot\tau)\mid w\in A,\;\tau\in\mathbb{R}^{+}\}.
\]
It is not difficult to see that
\[
\mu_\beta(\tilde{A})=\mathbb{P}_{W^{1/2}}(A)=1
\]
and 
\[
\mu_{\beta,h}(\tilde{A})=\mathbb{P}_{W^{1/2}+h}(A)=0.\qedhere
\]
\end{proof}
A consequence of Theorem~\ref{thm:CM-gBm}-1 and Theorem~\ref{thm:equivalent-H}
is the following corollary.
\begin{cor}
Let $h\in\mathbb{W}$ be an absolutely continuous function such that $\int_{0}^{T}|\dot{h}(t)|^{2}\,\mathrm{d}t<\infty$ for any $T>0$ and
$\int_{0}^{\infty}|\dot{h}(t)|^{2}\,\mathrm{d}t=\infty$. Then, for any $T>0$ the measures
$\mu_{\beta,T}^{h}$ and $\mu_{\beta,T}$ are equivalent,
but the measures $\mu_{\beta,h}$ and $\mu_{\beta}$
are singular.
\end{cor}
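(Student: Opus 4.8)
The plan is to prove the two assertions separately, deriving the finite-time equivalence from (the proof of) Theorem~\ref{thm:CM-gBm}-1 and the full-time singularity from (the proof of) Theorem~\ref{thm:equivalent-H}. The decisive feature of the hypothesis is the gap between the \emph{local} and \emph{global} square-integrability of $\dot h$: the assumption $\int_0^S|\dot h(t)|^2\,\mathrm{d}t<\infty$ for every finite $S$ says precisely that $h\!\upharpoonright_{[0,S]}\in\mathcal{H}_{1/2}(S)$ for all $S>0$, while $\int_0^\infty|\dot h(t)|^2\,\mathrm{d}t=\infty$ says that $h\in\mathbb{W}\setminus\mathcal{H}_{1/2}$. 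These two facts feed the two conclusions.

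For the finite-time equivalence I would fix $T>0$. Although $h\notin\mathcal{H}_{1/2}$, so that Theorem~\ref{thm:CM-gBm}-1 does not apply verbatim, its proof is entirely \emph{local in $\tau$}: for each fixed $\tau\in\mathbb{R}^+$ it applies the finite-horizon Cameron--Martin formula \eqref{eq:CMF-fBm-0T} to the bounded functional $w\mapsto f(w(t_1\tau),\dots,w(t_n\tau))$ on $\mathbb{W}_{T\tau}$, and that step requires only $h\!\upharpoonright_{[0,T\tau]}\in\mathcal{H}_{1/2}(T\tau)$, i.e. $\int_0^{T\tau}|\dot h(s)|^2\,\mathrm{d}s<\infty$. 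By hypothesis this holds for every $\tau$, so the factor $\mathcal{E}\big(\int_0^\cdot\dot h(s)\,\mathrm{d}W^{1/2}(s)\big)(T\tau)$ is a well-defined, strictly positive random variable with $\mathbb{P}_{1/2}$-expectation $1$. Integrating the resulting identity over $\tau$ against $\mathbb{P}_{Y_\beta}$ via Tonelli reproduces the density \eqref{eq:Cameron-Martin-gBm0T}, whose total mass is $1$ and which is strictly positive; hence $\mu_{\beta,T}^{h}$ and $\mu_{\beta,T}$ are equivalent for every $T>0$.

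For the full-time singularity I would invoke the construction already carried out in the proof of Theorem~\ref{thm:equivalent-H}. Since $h\in\mathbb{W}\setminus\mathcal{H}_{1/2}$, the Gaussian laws $\mathbb{P}_{W^{1/2}}$ and $\mathbb{P}_{W^{1/2}+h}$ are mutually singular, and the set $\tilde A=\{w(\cdot\tau)\mid w\in A,\ \tau\in\mathbb{R}^+\}$ built there satisfies $\mu_\beta(\tilde A)=1$ and $\mu_{\beta,h}(\tilde A)=0$. This separating set already exhibits the mutual singularity of $\mu_{\beta,h}$ and $\mu_\beta$, so nothing beyond Theorem~\ref{thm:equivalent-H} is required.

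The main point to get right is the first one: the tension that Theorem~\ref{thm:CM-gBm}-1 is stated under the \emph{global} condition $h\in\mathcal{H}_{1/2}$, whereas here $h$ fails it. The resolution is that, because the subordinator enters as $w(\cdot\,\tau)$, the finite-time law is assembled fibrewise over $\tau$, and on each fibre one only sees $h$ on the bounded interval $[0,T\tau]$, where it is genuinely Cameron--Martin; no global control of $\int_0^\infty|\dot h|^2$ is used in proving equivalence, which is exactly why the equivalence and the singularity can coexist. A secondary subtlety worth flagging is that Theorem~\ref{thm:equivalent-H} is phrased as an implication about (non-)equivalence, but its proof in fact produces a separating set, and so delivers the stronger singularity statement needed here.
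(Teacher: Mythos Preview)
Your proposal is correct and matches the paper's intended argument; indeed the paper offers no separate proof, merely declaring the corollary a consequence of Theorem~\ref{thm:CM-gBm}-1 and Theorem~\ref{thm:equivalent-H}, and your write-up supplies precisely the missing justification. The two points you flag---that the proof of Theorem~\ref{thm:CM-gBm}-1 is fibrewise in $\tau$ and hence needs only $h\!\upharpoonright_{[0,T\tau]}\in\mathcal{H}_{1/2}(T\tau)$ (which the paper later records as a remark after Theorem~\ref{thm:CM-ggBm}), and that the proof of Theorem~\ref{thm:equivalent-H} actually builds a separating set and thus yields singularity rather than mere non-equivalence---are exactly the observations needed to close the gap between the theorems as stated and the corollary as claimed.
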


\subsection{For Generalized Grey Brownian Motion}
\label{subsec:canonical-real-ggBm}

For every $h\in\mathcal{H}_{H}$ we define on the space $\big(\mathbb{W}\times\mathbb{R}^{+},\mathcal{B}(\mathbb{W})\otimes\mathcal{B}(\mathbb{R}^{+}),\mathbb{P}_{H}\otimes\mathbb{P}_{Y_{\beta}}\big)$ the process $X_{\beta,2H}^{h}$, for every $t\geq0$, by 
\[
X_{\beta,2H}^{h}(t)(w,\tau):=W^{H}(t\mathcal{Y}_{\beta}^{1/(2H)}(\tau))(w)+h(t\mathcal{Y}_{\beta}^{1/(2H)}(\tau))=w(t\tau^{1/(2H)})+h(t\tau^{1/(2H)}),
\]
and for any $T>0$  the process $X_{\beta,2H,T}^{h}$
\[
X_{\beta,2H,T}^{h}:=\big\{X_{\beta,2H}^{h}(t),\;t\in[0,T]\big\}.
\]
The corresponding laws are denoted by $\mu_{\beta,2H}^{h}$ and $\mu_{\beta,2H,T}^{h}$, respectively. We also denote by $\mu_{\beta,2H,T}$ the law of the process $X_{\beta,2H,T}:=\big\{X_{\beta,2H}(t),\; t\in[0,T]\big\}$.

Now we are ready to state the Cameron-Martin theorem for the generalized grey Brownian motion.
\begin{thm}\label{thm:CM-ggBm}
Let $h\in\mathcal{H}_{H}$ be given. Then we have: 
\begin{enumerate}
\item  for every $T>0$ the measures $\mu_{\beta,2H,T}^{h}$ and $\mu_{\beta,2H,T}$ 
are equivalent and the Radon-Nikodym density is given by
\begin{eqnarray}\label{eq:Cameron-Martin-ggBm0T}
\frac{\mathrm{d}\mu_{\beta,2H,T}^{h}}{\mathrm{d}\mu_{\beta,2H,T}}&=& \exp\left(\int_{0}^{t}{h}(s)\,\mathrm{d}W^H(s)-\frac{1}{2}\|h\|^2_{\mathcal{H}_H(t)}\right)\!\!\bigg|_{t=T\mathcal{Y}_\beta^{1/(2H)}}\nonumber \\
&=&\mathcal{E}\left(\int_{0}^{.}\dot{h}
(s)\,\mathrm{d}\widetilde{W}(s)\right)\!(T\mathcal{Y}_{\beta}^{1/(2H)}).
\end{eqnarray}
\item The measures $\mu_{\beta,2H}^h$ and $\mu_{\beta,2H}$ are equivalent, and the Radon-Nikodym density has the form

\begin{eqnarray}\label{eq:Cameron-Martin-ggBm-0inf}
\frac{\mathrm{d}\mu_{\beta,2H}^h}{\mu_{\beta,2H}}&=& \exp\left(\int_{0}^{\infty}{h}(s)\,\mathrm{d}W^H(s)-\frac{1}{2}\|h\|^2_{\mathcal{H}_H}\right) \nonumber \\
&=&\mathcal{E}\left(\int_{0}^{.}\dot{h}(s)\,\mathrm{d}\widetilde{W}(s)\right)\!(\infty).
\end{eqnarray}
\end{enumerate}
Here, $\widetilde{W}$ is the standard Brownian motion related to $W^H$ by Equation \eqref{eq:Bm-associated-fBm}.
\end{thm}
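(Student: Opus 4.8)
The plan is to follow the strategy used for the grey Brownian motion in Theorem~\ref{thm:CM-gBm}, replacing the classical Cameron--Martin formula by its fractional counterpart. By \cite[Thm.~3.3]{Billingsley1995} it suffices to match finite-dimensional distributions, so for part~1 I would fix $n\in\mathbb{N}$, times $0<t_{1}<\dots<t_{n}\le T$ and $f\in C_{b}(\mathbb{R}^{n})$, and prove
\begin{align*}
&\int_{\mathbb{W}\times\mathbb{R}^{+}}f\big(w(t_{1}\tau^{1/(2H)})+h(t_{1}\tau^{1/(2H)}),\dots,w(t_{n}\tau^{1/(2H)})+h(t_{n}\tau^{1/(2H)})\big)\,\mathrm{d}(\mathbb{P}_{H}\otimes\mathbb{P}_{Y_{\beta}})\\
&=\int_{\mathbb{W}\times\mathbb{R}^{+}}f\big(w(t_{1}\tau^{1/(2H)}),\dots,w(t_{n}\tau^{1/(2H)})\big)\,\mathcal{E}\Big(\int_{0}^{\cdot}\dot{h}(s)\,\mathrm{d}\widetilde{W}(s)\Big)(T\tau^{1/(2H)})\,\mathrm{d}(\mathbb{P}_{H}\otimes\mathbb{P}_{Y_{\beta}}),
\end{align*}
with the analogous statement for $0\le t_{1}<\dots<t_{n}<\infty$ in part~2.

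The heart of the argument is carried out for each fixed $\tau$. Writing $S:=T\tau^{1/(2H)}$, the map $w\mapsto f\big(w(t_{1}\tau^{1/(2H)}),\dots,w(t_{n}\tau^{1/(2H)})\big)$ is a bounded measurable functional on $\mathbb{W}_{S}$, and since $h=\mathcal{K}_{H}\dot{h}$ we have $h\!\!\upharpoonright_{[0,S]}=\mathcal{K}_{H}(\dot{h}\!\!\upharpoonright_{[0,S]})\in\mathcal{H}_{H}(S)$ with $\|h\|^{2}_{\mathcal{H}_{H}(S)}=\int_{0}^{S}|\dot{h}(s)|^{2}\,\mathrm{d}s$. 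Applying the finite-interval Cameron--Martin theorem for fBm \eqref{eq:CMF-fBm-0T} with time horizon $S$ then yields
\[
\int_{\mathbb{W}}f\big(X_{\beta,2H}^{h}(t_{1}),\dots,X_{\beta,2H}^{h}(t_{n})\big)\,\mathrm{d}\mathbb{P}_{H}=\int_{\mathbb{W}}f\big(X_{\beta,2H}(t_{1}),\dots,X_{\beta,2H}(t_{n})\big)\,\mathcal{E}\Big(\int_{0}^{\cdot}\dot{h}(s)\,\mathrm{d}\widetilde{W}(s)\Big)(S)\,\mathrm{d}\mathbb{P}_{H},
\]
and the equality of the two displayed forms of the density in \eqref{eq:Cameron-Martin-ggBm0T} is precisely the identity \eqref{eq:Wiener-integral-fBm-Bm} relating the Wiener integral w.r.t.~$W^{H}$ to the It\^{o} integral w.r.t.~$\widetilde{W}$. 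Integrating this identity in $\tau$ against $\mathbb{P}_{Y_{\beta}}$ (using Tonelli, measurability of $\tau\mapsto S$, and that each inner density has $\mathbb{P}_{H}$-expectation $1$ by the martingale property, so the total mass equals $1$) gives the desired finite-dimensional identity and hence the equivalence $\mu_{\beta,2H,T}^{h}\sim\mu_{\beta,2H,T}$. Part~2 is obtained in the same way from the whole-line result (Theorem~\ref{thm:Cameron-Martin-thm}), passing to $S=\infty$ via the uniform integrability of the exponential martingale recorded in Remark~\ref{rem:exp-martinal-infty}.

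The step I expect to be the main obstacle --- and the genuinely new point compared with the gBm case --- is justifying that the right-hand side of \eqref{eq:Cameron-Martin-ggBm0T}, although written through $\mathcal{Y}_{\beta}$, is a bona fide Radon--Nikodym density on the path space, i.e.~measurable with respect to $\sigma(X_{\beta,2H}(t),t\in[0,T])$. For $H=1/2$ this was automatic because $\mathcal{Y}_{\beta}$ is the quadratic variation of $X_{\beta}$; here I would instead invoke the $1/H$-variation of fBm, which on $[0,T]$ equals $\rho_{H}\,T\mathcal{Y}_{\beta}^{1/(2H)}$ for a universal constant $\rho_{H}$, so that $\mathcal{Y}_{\beta}$ is recovered as a measurable functional of the trajectory of $X_{\beta,2H}$. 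Together with the fact that $\widetilde{W}\!\!\upharpoonright_{[0,S]}$ is adapted to the filtration of $W^{H}$ through \eqref{eq:repr-fBm-Bm}, this makes the stochastic integral $\int_{0}^{S}\dot{h}\,\mathrm{d}\widetilde{W}$, and hence the whole density, a measurable functional of the path, which is what turns the finite-dimensional identity above into the claimed Radon--Nikodym statement.
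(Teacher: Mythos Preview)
Your core argument is exactly the paper's: reduce to cylinder functionals, freeze $\tau$, apply the Cameron--Martin theorem for fBm on $\mathbb{W}_{T\tau^{1/(2H)}}$ (equation \eqref{eq:CMF-fBm-0T}), then integrate in $\tau$; part~2 is the same with the whole-line formula \eqref{eq:CMF-fBm}. That is all the paper does in its proof of this theorem.

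Your final paragraph goes beyond the paper. The paper writes the ``Radon--Nikodym density'' as a random variable on $\mathbb{W}\times\mathbb{R}^{+}$ and never checks, in the proof of Theorem~\ref{thm:CM-ggBm}, that it is $\sigma(X_{\beta,2H}(t),t\le T)$-measurable; it defers that point to the proof of Theorem~\ref{thm:closable-direct-derivative-ggBm}, where it argues via the equality of the filtrations of $W^{H}$ and $\widetilde{W}$ (Corollary~4.1 in \cite{Decreusefond22}) rather than via $1/H$-variation. Your variation argument is a legitimate alternative for recovering $\mathcal{Y}_{\beta}$ from the path, but be aware that the relevant result is the \emph{weak} $1/H$-variation along regular partitions (the strong $1/H$-variation of fBm is infinite), so you would need to phrase it accordingly. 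For the theorem as the paper intends it, this extra step is not required.
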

\begin{proof}
1. To keep the notation short, we set $\tau_H:=\tau^{1/(2H)}$.  We have to show 
\begin{eqnarray*}
 && \int_{\mathbb{W}\times\mathbb{R}^{+}}f\big(w(t_{1}\tau_H)+h(t_{1}\tau_H),\ldots,w(t_{n}\tau_H)+h(t_{n}\tau_H)\big)\,\mathbb{P}_{H}(\mathrm{d}w)\,\mathbb{P}_{Y_{\beta}}(\mathrm{d}\tau)\\
&=&\int_{\mathbb{W}\times\mathbb{R}^{+}}f\big(w(t_{1}\tau_H),\ldots,w(t_{n}\tau_H)\big) \mathcal{E}\left(\int_{0}^{.}\dot{h}(s)\,\mathrm{d}\widetilde{W}(s)\right)\!(T\tau_H)\,\mathbb{P}_{H}(\mathrm{d}w)\,\mathbb{P}_{Y_{\beta}}(\mathrm{d}\tau),
\end{eqnarray*}
where $n\in\mathbb{N}$, $0<t_{1}<\dots<t_{n}\le T$ and $f\in C_{b}(\mathbb{R}^{n})$.

Taking into account the Remark~\ref{rem:RKHS-finite-time}, applying the classical Cameron--Martin formula~\eqref{eq:Cameron-Martin-fBm} to the bounded measurable
functional on $\mathbb{W}_{T\tau_H}$, 
\[
w\mapsto f\big(w(t_{1}\tau_H),\ldots,w(t_{n}\tau_H)\big)
\]
yields 
\[
\begin{aligned} & \int_{\mathbb{W}}f\big(w(t_{1}\tau_H)+h(t_{1}\tau_H),\ldots,w(t_{n}\tau_H)+h(t_{n}\tau_H)\big)\,\mathbb{P}_{H}(\mathrm{d}w)\\
 & =\int_{\mathbb{W}_{T\tau_H}}f\big(w(t_{1}\tau_H)+h(t_{1}\tau_H),\ldots,w(t_{n}\tau_H)+h(t_{n}\tau_H)\big)\,\mathbb{P}_{H}(\mathrm{d}w)\\
 & =\int_{\mathbb{W}_{T\tau_H}}f\big(w(t_{1}\tau_H),\ldots,w(t_{n}\tau_H)\big)\mathcal{E}\left(\int_{0}^{.}\dot{h}(s)\,\mathrm{d}\widetilde{W}(s)\right)\!(T\tau_H)\,\mathbb{P}_{H}(\mathrm{d}w)\\
 & =\int_{\mathbb{W}}f\big(w(t_{1}\tau_H),\ldots,w(t_{n}\tau_H)\big)\mathcal{E}\left(\int_{0}^{.}\dot{h}(s)\,\mathrm{d}\widetilde{W}(s)\right)\!(T\tau_H)\,\mathbb{P}_{H}(\mathrm{d}w).
\end{aligned}
\]
Therefore, we may integrate both sides of the above equality w.r.t.~$\mathbb{P}_{Y_{\beta}}$
to obtain the result.

2. It follows by an easy adaption of the above arguments with $0\le t_{1}<t_{2}<\dots<t_{n}<\infty$
and the Cameron--Martin formula \eqref{eq:Cameron-Martin-fBm} for functionals defined in
$\mathbb{W}$.
\end{proof}

\begin{rem}
    Note that the results of Theorem~\ref{thm:CM-gBm}--1 and Theorem~\ref{thm:CM-ggBm}--1 are still valid if we assume that $h\in \mathcal{H}_H(T)$ for any $T>0$.   
\end{rem}

\begin{thm}
\label{thm:equivalent-2H}Let $h\in\mathbb{W}$ be given. If $\mu_{\beta,2H}$
and $\mu_{\beta,2H}^h$ are equivalent, then $h\in\mathcal{H}_{H}$.
\end{thm}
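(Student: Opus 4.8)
The plan is to prove the contrapositive: assuming $h\in\mathbb{W}\setminus\mathcal{H}_H$, I will show that $\mu_{\beta,2H}$ and $\mu_{\beta,2H}^h$ are mutually singular, so in particular they cannot be equivalent. This is the exact fBm analogue of Theorem~\ref{thm:equivalent-H}, and the only genuinely new external input is the Gaussian dichotomy for fBm: since $\mathbb{P}_{H}=\mathbb{P}_{W^H}$ is a centered Gaussian (Radon) measure on $\mathbb{W}$ whose Cameron--Martin space is exactly $\mathcal{H}_H$, the Feldman--H\'ajek theorem yields that $\mathbb{P}_{W^H}$ and $\mathbb{P}_{W^H+h}$ are equivalent when $h\in\mathcal{H}_H$ and mutually singular otherwise. (For $H=1/2$ this is the Revuz--Yor statement already invoked; for general $H$ one cites the translate theorem for Gaussian measures.) Hence for our $h$ there is a Borel set $A\subseteq\mathbb{W}$ with $\mathbb{P}_{W^H}(A)=1$ and $\mathbb{P}_{W^H+h}(A)=0$.

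It remains to transport this singularity through the subordination. Writing $S_c\colon\mathbb{W}\to\mathbb{W}$, $(S_cw)(t):=w(ct)$, for the time-scaling operator, the canonical realization reads $X_{\beta,2H}(\cdot)(w,\tau)=S_{\tau^{1/(2H)}}w$ and $X_{\beta,2H}^h(\cdot)(w,\tau)=S_{\tau^{1/(2H)}}(w+h)$, so that
\[
\mu_{\beta,2H}=\int_0^\infty (S_{\tau^{1/(2H)}})_*\mathbb{P}_{W^H}\,\mathbb{P}_{Y_\beta}(\mathrm{d}\tau),\qquad \mu_{\beta,2H}^h=\int_0^\infty (S_{\tau^{1/(2H)}})_*\mathbb{P}_{W^H+h}\,\mathbb{P}_{Y_\beta}(\mathrm{d}\tau).
\]
Mirroring Theorem~\ref{thm:equivalent-H}, one passes to the scaling orbit $\tilde A:=\bigcup_{c>0}S_cA$, which contains $A$ and is invariant under every $S_c$. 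Invariance and $A\subseteq\tilde A$ give at once $\mu_{\beta,2H}(\tilde A)=\int\mathbb{P}_{W^H}\big(S_{\tau^{1/(2H)}}^{-1}\tilde A\big)\,\mathbb{P}_{Y_\beta}(\mathrm{d}\tau)\ge\mathbb{P}_{W^H}(A)=1$, and likewise $\mu_{\beta,2H}^h(\tilde A)=\mathbb{P}_{W^H+h}(\tilde A)$. (Here $\tilde A$ is the image of the Borel set $A\times(0,\infty)$ under the jointly continuous map $(w,c)\mapsto S_cw$, hence analytic and universally measurable, which suffices for these integrals.)

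The step I expect to be the main obstacle is exactly the last identity, namely showing $\mathbb{P}_{W^H+h}(\tilde A)=0$: one must verify that enlarging $A$ to its whole scaling orbit creates no positive mass under the shifted measure. This is delicate precisely because, by $H$-self-similarity (Remark~\ref{rem:fBm}--4), $S_c$ acts on $\mathbb{P}_{W^H}$ by rescaling the \emph{amplitude} by $c^{H}$, so the pure time-scaling group does not preserve $\mathbb{P}_{W^H}$ and $\tilde A$ genuinely mixes all amplitude scales. In particular a Feldman--H\'ajek separating set is not scale-invariant, and the obvious scale-invariant functionals of quadratic-variation type fail to detect smooth shifts such as $h(t)=t\notin\mathcal{H}_H$; so naively transporting a single set, as in the $H=1/2$ argument, needs justification.

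The way I would make this rigorous is to recover the intrinsic scale from the path and disintegrate, rather than transport one set. For $H\in(1/2,1)$ the renormalized dyadic quadratic variation
\[
\Theta(w):=\lim_{N\to\infty}2^{N(2H-1)}\sum_{k=0}^{2^N-1}\bigl(w((k+1)2^{-N})-w(k2^{-N})\bigr)^2
\]
exists and recovers $\tau$ for $(S_{\tau^{1/(2H)}})_*\mathbb{P}_{W^H}$-a.e.\ path, so $\Theta=Y_\beta$ holds $\mu_{\beta,2H}$-a.s. Moreover, for each fixed $\tau$ the fibre laws $\nu_\tau:=(S_{\tau^{1/(2H)}})_*\mathbb{P}_{W^H}$ and $\nu_\tau^h:=(S_{\tau^{1/(2H)}})_*\mathbb{P}_{W^H+h}$ are mutually singular, since $\nu_\tau$ is Gaussian with Cameron--Martin space $S_{\tau^{1/(2H)}}\mathcal{H}_H$ and its shift $S_{\tau^{1/(2H)}}h$ lies in that space if and only if $h\in\mathcal{H}_H$. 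Choosing by measurable selection separating sets $A_\tau$ with $\nu_\tau(A_\tau)=1$ and $\nu_\tau^h(A_\tau)=0$, and setting $\mathcal{A}:=\{w:w\in A_{\Theta(w)}\}$, one gets $\mu_{\beta,2H}(\mathcal{A})=\int\nu_\tau(A_\tau)\,\mathbb{P}_{Y_\beta}(\mathrm{d}\tau)=1$; and $\mu_{\beta,2H}^h(\mathcal{A})=0$ whenever $\Theta$ still recovers $\tau$ under $\mu_{\beta,2H}^h$, which holds in the two principal cases: if the renormalized quadratic variation of $h$ vanishes (e.g.\ for locally absolutely continuous $h$ failing only the global condition $\dot h\in L^2(\mathbb{R}^+)$), and if it is infinite, in which case already the level set $\{\Theta<\infty\}$ separates the two measures. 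The finest point, which I expect to require a separate comparison of the two \emph{singular} Gaussian fibre laws at genuinely distinct scales, is the borderline case in which $h$ carries a finite nonzero quadratic variation of the critical order; here $\Theta$ is corrupted by $h$, and one must replace it by a scale detector insensitive to the deterministic shift. In every case $\mu_{\beta,2H}\perp\mu_{\beta,2H}^h$, contradicting equivalence, whence $h\in\mathcal{H}_H$.
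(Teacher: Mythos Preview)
Your opening strategy --- take a Borel set $A$ separating $\mathbb{P}_{W^H}$ from $\mathbb{P}_{W^H+h}$, pass to its time-scaling orbit $\tilde A=\bigcup_{c>0}S_cA$, and read off $\mu_{\beta,2H}(\tilde A)=1$ and $\mu_{\beta,2H}^h(\tilde A)=0$ --- is exactly the paper's proof, essentially verbatim. The paper asserts both identities with ``it is not difficult to see'' and stops there; in particular it does not address either of the points you raise (measurability of the orbit, and why enlarging $A$ to its whole scaling orbit cannot create positive $\mathbb{P}_{W^H+h}$-mass). So the step you single out as ``the main obstacle'' is precisely the one the paper glosses over rather than justifies.

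Your attempt to fill this gap via the scale detector $\Theta$ and fibrewise separation is considerably more elaborate than anything in the paper, and by your own admission it is incomplete: you do not handle the borderline case in which $h$ carries finite nonzero critical-order quadratic variation, and the measurable selection of the family $(A_\tau)_\tau$ together with the construction $\mathcal A=\{w:w\in A_{\Theta(w)}\}$ would itself require justification. Thus, as written, your proposal has a genuine gap in exactly the regime you flag. In short: you reproduce the paper's argument, correctly diagnose a soft spot that the paper leaves unaddressed, and then sketch a repair that is not yet complete.
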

\begin{proof}
Let $h\in\mathbb{W}\backslash\mathcal{H}_{H}$ be given. It follows from Proposition~20 in \cite{Coutin2007} 
that the Gaussian
measures $\mathbb{P}_{W^{H}}$ and $\mathbb{P}_{W^{H}+h}$ are singular. Therefore, there exists
a measurable set $A_H\subset\mathbb{W}$ such that
\[
\mathbb{P}_{W^{H}}(A_H)=1\qquad\mathrm{and}\qquad\mathbb{P}_{W^{H}+h}(A_H)=0.
\]
Define the subset of $\mathbb{W}$
\[
\tilde{A}_H:=\{w(\cdot\tau_H)\mid w\in A_H,\;\tau\in\mathbb{R}^{+}\}.
\]
It is not difficult to see that 
\[
\mu_{\beta,2H}(\tilde{A}_H)=\mathbb{P}_{W^{H}}(A_H)=1
\]
and 
\[
\mu_{\beta,2H}^{h}(\tilde{A}_H)=\mathbb{P}_{W^{H}+h}(A_H)=0.\qedhere
\]
\end{proof}
\begin{rem} We would like to specify the support of the laws: $\mu_\beta$, $\mu_{\beta,2H}$, $\mu_{\beta,T}$, and $\mu_{\beta,2H,T}$.
\begin{eqnarray*}
\mathrm{supp}(\mu_\beta)= \Omega^{1/2}&:=&\big\{ w\left(\cdot\tau\right): [0,\infty[\longrightarrow\mathbb{R},\,t\mapsto w(t\tau)\mid w\in\mathbb{W},\tau\geq0\big\}.\\
\mathrm{supp}(\mu_{\beta,2H})=\Omega^H&:=&\left\{ w\left(\cdot\tau^{1/(2H)}\right)\mid w\in\mathbb{W},\tau\geq0\right\}.\\
\mathrm{supp}(\mu_{\beta,T})=\Omega^{1/2}_T&:=&\left\{ \tilde{w}\!\!\upharpoonright_{\left[0,T\right]}\,\mid\tilde{w}\in\Omega^{1/2}\right\}.\\
\mathrm{supp}(\mu_{\beta,2H,T})=\Omega_T^H&:=&\left\{ \tilde{w}\!\!\upharpoonright_{\left[0,T\right]}\,\mid\tilde{w}\in\Omega^H\right\}.
\end{eqnarray*}
\end{rem}

\section{Integration by Parts Formula and Closability of the Derivative Operator}\label{sec:IbP-closability}
As an application of the Cameron--Martin formulas from Section~\ref{sec:Cameron-Martin-formula}, we first introduce the directional derivative in the directions of the elements $h\in\mathcal{H}_{H}$ for smooth functionals. We show an integration by parts formula leading to the closability of the directional derivative on $L^p(\mathbb{P}_H\otimes\mathbb{P}_{Y_\beta})$, $p\ge1$.

We begin by analyzing the class of gBm $X_{\beta,T}$, $T>0$, (Subsection~\ref{subsec:gradient-gBm}) and then move on to the broader class $X_{\beta,2H,T}$, $T>0$, of ggBm from Subsection~\ref{subsec:gradient-ggBm}.

\subsection{Derivative Operator for Grey Brownian Motion}\label{subsec:gradient-gBm}
We start by introducing the directional derivative in special directions (from the Cameron--Marin space $\mathcal{H}_{1/2}$) of a real-valued measurable function $F$ on $\big(\mathbb{W}\times\mathbb{R}^{+},\mathcal{B}(\mathbb{W})\otimes\mathcal{B}(\mathbb{R}^{+}),\mathbb{P}_{1/2}\otimes\mathbb{P}_{Y_{\beta}}\big)$. 

Given $T>0$, $h\in \mathcal{H}_{1/2}$, and $F:\mathbb{W}\times\mathbb{R}^+\longrightarrow\mathbb{R}$ the partial derivative of $F$ in the direction $h$ is defined by
\[
(\partial_{1,h}F)(w,\tau):=\lim_{\varepsilon\to0}\frac{F(w+\varepsilon h,\tau)-F(w,\tau)}{\varepsilon},\quad w\in\mathbb{W}, \tau\in\mathbb{R}^+,
\]
whenever the limit exists. 

We consider the class $\mathcal{F}C_b^\infty$ of functions $F$ of the form
\begin{equation}\label{eq:smooth-cylinder-ftion}
F\big(w,\tau\big):=f\big(w(t_{1}\tau),\ldots,w( t_{n}\tau)\big),\quad w\in\mathbb{W},\tau\in\mathbb{R}^+,
\end{equation}
where $n\in\mathbb{N}$, $0<t_{1}<\dots<t_{n}\le T$ and $f\in C_{b}^\infty(\mathbb{R}^{n})$ (the set of $C^\infty$ bounded real-valued functions on $\mathbb{R}^n$).  Note that this class is closely related to the process $X_{\beta,T}$. In fact, any $F\in \mathcal{F}C_{b}^{\infty}$ of the form \eqref{eq:smooth-cylinder-ftion} is simply
\begin{equation}\label{eq:cylinder-gBm}
F(w,\tau)=f\big(X_{\beta,T} (t_1)(w,\tau),\dots ,X_{\beta,T} (t_n)(w,\tau)\big),
\end{equation}
generally referred to as smooth cylindrical function. It is simple to verify that the derivative $\partial_{1,h}F$ exists for any $F\in\mathcal{F}C_b^\infty$ and we have
\begin{equation}\label{eq:derivative-cylinder-fction}
    (\partial_{1,h}F)(w,\tau)=\sum_{i=1}^n\partial_{i} f\big(w(t_1\tau),\dots,w( t_n\tau)\big)h(t_i\tau),\quad w\in\mathbb{W},\tau\in\mathbb{R}^+,
\end{equation}
where $\partial_{i}$ denotes the partial derivative w.r.t.~$x_i$.

In what follows, we show that $\partial_{1,h}F$ is represented by an element
of the Cameron-Martin space $\mathcal{H}_{1/2}$, denoted by $\nabla_1 F$, where the subscript $1$ is used to indicate w.r.t.~the first variable $w$.
\begin{prop}
\label{prop:cont-derivative}Let $F\in\mathcal{F}C_{b}^{\infty}$ be given. Then, for any $w\in\mathbb{W}$,
$\tau\in\mathbb{R}^+$, 
\begin{enumerate}
\item the map
\[
\mathcal{H}_{1/2}\ni h\mapsto(\partial_{1,h}F)(w,\tau)\in \mathbb{R}
\]
is a bounded linear functional on $\mathcal{H}_{1/2}$,
\item there exists a unique element $\nabla_1 F(w,\tau)\in\mathcal{H}_{1/2}$
given by 
\begin{equation}\label{eq:gradient-repretion}
\big(\nabla_1 F(w,\tau)\big)(t)=\sum_{i=1}^{n}\partial_{i}f\big(w(t_{1}\tau),\dots,w(t_{n}\tau)\big)(t\wedge t_{i}\tau),\quad t\ge0
\end{equation}
and 
\begin{equation}\label{eq:gradient-repretion-dot}
\dot{\nabla}_1 F(w,\tau)(t)=\sum_{i=1}^{n}\partial_{i}f\big(w(t_{1}\tau),\dots,w(t_{n}\tau)\big)\mathbbm{1}_{[0,t_{i}\tau)}(t)
\end{equation}
satisfying 
\begin{equation}
(\partial_{1,h}F)(w,\tau)=\big(\nabla_1 F(w,\tau),h\big)_{\mathcal{H}_{1/2}}.\label{eq:derivative-repretion}
\end{equation}
\end{enumerate}
\end{prop}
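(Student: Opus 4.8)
The plan is to prove the two claims in Proposition~\ref{prop:cont-derivative} by first establishing the explicit formula for $\nabla_1 F$ and then verifying that it represents $\partial_{1,h}F$ via the inner product. Since we already have the explicit expression \eqref{eq:derivative-cylinder-fction} for $(\partial_{1,h}F)(w,\tau)$, the whole proof reduces to recognizing the right-hand side as an inner product in $\mathcal{H}_{1/2}$. The key observation is that, for the case $H=1/2$, the covariance kernel is $R_{1/2}(t,s)=t\wedge s$, and by the reproducing property \eqref{eq:scalarproductH0} we have $\big(R_{1/2}(s,\cdot),h\big)_{\mathcal{H}_{1/2}}=h(s)$ for every $h\in\mathcal{H}_{1/2}$ and $s\ge0$ (this is the defining reproducing-kernel identity, extended from $\mathfrak{R}_{1/2}$ to all of $\mathcal{H}_{1/2}$ by density and continuity of the inner product).

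First I would fix $w\in\mathbb{W}$ and $\tau\in\mathbb{R}^+$ and rewrite \eqref{eq:derivative-cylinder-fction}. Using $h(t_i\tau)=\big(R_{1/2}(t_i\tau,\cdot),h\big)_{\mathcal{H}_{1/2}}$, linearity of the inner product gives
\[
(\partial_{1,h}F)(w,\tau)=\left(\sum_{i=1}^{n}\partial_{i}f\big(w(t_{1}\tau),\dots,w(t_{n}\tau)\big)R_{1/2}(t_i\tau,\cdot),\,h\right)_{\mathcal{H}_{1/2}}.
\]
Defining $\nabla_1 F(w,\tau):=\sum_{i=1}^{n}\partial_{i}f\big(w(t_{1}\tau),\dots,w(t_{n}\tau)\big)R_{1/2}(t_i\tau,\cdot)$, which is a finite linear combination of elements of $\mathfrak{R}_{1/2}\subset\mathcal{H}_{1/2}$, immediately yields \eqref{eq:derivative-repretion}. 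Since $R_{1/2}(t_i\tau,\cdot)$ evaluated at $t$ equals $t\wedge t_i\tau$, this element is precisely the function in \eqref{eq:gradient-repretion}, and its preimage $\dot{\nabla}_1 F=\mathcal{K}_{1/2}^{-1}\nabla_1 F$ is obtained from \eqref{eq:Kop-Hequal12}, which for $H=1/2$ reduces to integration, so that the derivative of $t\mapsto t\wedge t_i\tau$ is $\mathbbm{1}_{[0,t_i\tau)}(t)$, giving \eqref{eq:gradient-repretion-dot}.

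For part (1), boundedness of the linear functional $h\mapsto(\partial_{1,h}F)(w,\tau)$ follows directly from the representation \eqref{eq:derivative-repretion} together with the Cauchy--Schwarz inequality, which bounds it by $\|\nabla_1 F(w,\tau)\|_{\mathcal{H}_{1/2}}\,\|h\|_{\mathcal{H}_{1/2}}$; linearity in $h$ is clear from \eqref{eq:derivative-cylinder-fction}. Uniqueness of $\nabla_1 F(w,\tau)$ in part (2) is then the Riesz representation theorem applied in the Hilbert space $\mathcal{H}_{1/2}$. I do not expect any serious obstacle: the only point requiring mild care is the justification that the reproducing identity $h(s)=\big(R_{1/2}(s,\cdot),h\big)_{\mathcal{H}_{1/2}}$ holds for all $h\in\mathcal{H}_{1/2}$ and not merely on the dense subspace $\mathfrak{R}_{1/2}$, which is handled by continuity of both sides in $h$ (the evaluation $h\mapsto h(s)$ is continuous because $\|R_{1/2}(s,\cdot)\|_{\mathcal{H}_{1/2}}^2=s<\infty$). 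The existence of $\partial_{1,h}F$ and the formula \eqref{eq:derivative-cylinder-fction} are already granted by the discussion preceding the proposition, so these need not be reproved.
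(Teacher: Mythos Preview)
Your proposal is correct and follows essentially the same approach as the paper's proof. The only difference is organizational: the paper first proves boundedness directly via $h(t_i\tau)=\int_0^{t_i\tau}\dot{h}(s)\,\mathrm{d}s$ and Cauchy--Schwarz in $L^2(\lambda)$, then invokes Riesz and identifies the representer, whereas you exhibit the representer first via the reproducing-kernel identity $h(s)=(R_{1/2}(s,\cdot),h)_{\mathcal{H}_{1/2}}$ and deduce boundedness from it---but since the reproducing identity is exactly the integral representation in disguise, the two arguments unravel to the same computation.
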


\begin{proof}
1. Let $F\in\mathcal{F}C_{b}^{\infty}$
be given as in (\ref{eq:smooth-cylinder-ftion}). The linearity of
the map is obvious. For every $h\in\mathcal{H}_{1/2}$, using the
representation (\ref{eq:1/2-rep}) we have
\begin{eqnarray*}
|(\partial_{1,h}F)(w,\tau)| & = & \left|\sum_{i=1}^{n}\partial_{i}f\big(w(t_{1}\tau),\dots,w(t_{n}\tau)\big)h(t_{i}\tau)\right|\\
 & \le & \sum_{i=1}^{n}\|\partial_{i}f\|_{\infty}\left|\int_{0}^{t_{i}\tau}\dot{h}(s)\,\mathrm{d}s\right|.
\end{eqnarray*}
An application of the Cauchy-Schwarz inequality and the fact that
$\|\dot{h}\|_{2}=\|h\|_{\mathcal{H}_{1/2}}$, yields
\begin{equation}
|(\partial_{1,h}F)(w,\tau)|\le\sum_{i=1}^{n}\|\partial_{i}f\|_{\infty}\sqrt{t_{i}\tau}\|\dot{h}\|_{2}=\sum_{i=1}^{n}\|\partial_{i}f\|_{\infty}\sqrt{t_{i}\tau}\|h\|_{\mathcal{H}_{1/2}}.\label{eq:est-CM-norm}
\end{equation}
2. The equality (\ref{eq:derivative-repretion}) follows from 1.~and
the Riesz representation theorem. From (\ref{eq:derivative-cylinder-fction})
and (\ref{eq:1/2-rep}) we obtain 
\begin{eqnarray*}
(\partial_{1,h}F)(w,\tau) & = & \sum_{i=1}^{n}\partial_{i}f\big(w(t_{1}\tau),\dots,w(t_{n}\tau)\big)h(t_{i}\tau)\\
 & = & \sum_{i=1}^{n}\partial_{i}f\big(w(t_{1}\tau),\dots,w(t_{n}\tau)\big)\int_{0}^{t_{i}\tau}\dot{h}(s)\,\mathrm{d}s\\
 & = & \int_{0}^{\infty}\sum_{i=1}^{n}\partial_{i}f\big(w(t_{1}\tau),\dots,w(t_{n}\tau)\big)\mathbbm{1}_{[0,t_{i}\tau)}(s)\dot{h}(s)\,\mathrm{d}s.
\end{eqnarray*}
The equalities (\ref{eq:gradient-repretion}) and (\ref{eq:gradient-repretion-dot})
easily follow.
\end{proof}
Next, we show that the directional derivative $\partial_{1,h}F$ with  $F\in\mathcal{F}C_{b}^{\infty}$
and $h\in\mathcal{H}_{1/2}$, is an element of $L^{p}(\mathbb{P}_{1/2}\otimes\mathbb{P}_{Y_\beta})$
for any $p\ge1$. 
\begin{prop}
\label{prop:direct-deriv-Lp}Let $F\in\mathcal{F}C_{b}^{\infty}$
and $p\ge1$ be given. Then
\begin{enumerate}
\item $\partial_{1,h}F\in L^{p}(\mathbb{W}\times\mathbb{R}^+,\mathcal{F}^{X_{\beta,T}},\mathbb{P}_{1/2}\otimes\mathbb{P}_{Y_\beta})$, for any $h\in\mathcal{H}_{1/2}$,
where $\mathcal{F}^{X_{\beta,T}}:=\sigma\big(X_{\beta,T}(t),t\in\left[0,T\right]\big)$ is the natural $\sigma$-algebra generated by the process $X_\beta$ up to time $T$.
\item $\nabla_1 F\in L_{\mathcal{H}_{1/2}}^{p}(\mathcal{F}^{X_{\beta,T}},\mathbb{P}_{1/2}\otimes\mathbb{P}_{Y_\beta}):=L^{p}(\mathbb{W}\times\mathbb{R}^+,\mathcal{F}^{X_{\beta,T}},\mathbb{P}_{1/2}\otimes\mathbb{P}_{Y_\beta};\mathcal{H}_{1/2})$. 
\end{enumerate}
\end{prop}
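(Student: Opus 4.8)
The plan is to deduce both statements from the pointwise bounds already obtained in \eqref{eq:est-CM-norm} and \eqref{eq:gradient-repretion-dot}, together with the finiteness of the fractional moments of $Y_\beta$ recorded in \eqref{eq:Mbeta-moments}. Each of the two claims splits into an integrability estimate, which is the short computation, and a measurability assertion with respect to $\mathcal{F}^{X_{\beta,T}}$, which is the only point that needs a genuine argument.

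For part 1, I would start from \eqref{eq:est-CM-norm}, which I rewrite as the pointwise bound $|(\partial_{1,h}F)(w,\tau)| \le C\sqrt{\tau}$ with $C := \|h\|_{\mathcal{H}_{1/2}}\sum_{i=1}^n \|\partial_i f\|_\infty \sqrt{t_i} < \infty$. Raising to the $p$-th power, the $w$-dependence drops out and Tonelli's theorem (the integrand being nonnegative) gives
\[
\int_{\mathbb{W}\times\mathbb{R}^+} |\partial_{1,h}F|^p \, \mathrm{d}(\mathbb{P}_{1/2}\otimes\mathbb{P}_{Y_\beta}) \le C^p \int_0^\infty \tau^{p/2} M_\beta(\tau)\,\mathrm{d}\tau = C^p\,\frac{\Gamma(p/2+1)}{\Gamma(\beta p/2+1)} < \infty,
\]
where the last equality is \eqref{eq:Mbeta-moments} with $\delta = p/2 > -1$. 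For part 2 I would use $\|\nabla_1 F(w,\tau)\|_{\mathcal{H}_{1/2}} = \|\dot{\nabla}_1 F(w,\tau)\|_2$ and bound the sum of indicators in \eqref{eq:gradient-repretion-dot} by $\big(\sum_i \|\partial_i f\|_\infty\big)\mathbbm{1}_{[0,t_n\tau)}$, which yields $\|\nabla_1 F(w,\tau)\|_{\mathcal{H}_{1/2}} \le C'\sqrt{\tau}$ with $C' := \big(\sum_i \|\partial_i f\|_\infty\big)\sqrt{t_n}$; the same moment computation then gives $\nabla_1 F \in L^p_{\mathcal{H}_{1/2}}$. Alternatively, part 2 is immediate from part 1 via the Riesz identity $\|\nabla_1 F\|_{\mathcal{H}_{1/2}} = \sup_{\|h\|_{\mathcal{H}_{1/2}}\le 1}|\partial_{1,h}F|$ applied to \eqref{eq:derivative-repretion} and \eqref{eq:est-CM-norm}.

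The step I expect to be the main obstacle is the $\mathcal{F}^{X_{\beta,T}}$-measurability. Each factor $\partial_i f\big(w(t_1\tau),\dots,w(t_n\tau)\big)$ is a continuous function of the random variables $X_{\beta,T}(t_i) = w(t_i\tau)$ with $t_i \le T$, hence $\mathcal{F}^{X_{\beta,T}}$-measurable; the remaining dependence is on $\tau$ alone (through $h(t_i\tau)$ in part 1 and through the endpoints $t_i\tau$ of the integration intervals in part 2). To see that $\tau$ itself is $\mathcal{F}^{X_{\beta,T}}$-measurable, I would note that for each fixed $\tau$ the path $t\mapsto X_{\beta,T}(t) = w(t\tau)$ is a time-changed standard Brownian motion with quadratic variation $[X_{\beta,T}]_t = \tau t$, so that $\tau = T^{-1}[X_{\beta,T}]_T$ holds $\mathbb{P}_{1/2}\otimes\mathbb{P}_{Y_\beta}$-a.s.; since the quadratic variation is a measurable functional of the path on $[0,T]$, this exhibits $\tau$ as $\mathcal{F}^{X_{\beta,T}}$-measurable. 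Combining this measurability with the integrability bounds above completes the argument for both parts.
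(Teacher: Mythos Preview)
Your integrability arguments are essentially those of the paper: both reduce to the pointwise bound \eqref{eq:est-CM-norm} (respectively the explicit formula \eqref{eq:gradient-repretion-dot}) followed by the moment identity \eqref{eq:Mbeta-moments} with $\delta=p/2$. The only differences are cosmetic---the paper keeps the sum explicit and invokes the convexity inequality, whereas you absorb everything into a single constant $C$ and, for part~2, bound the indicator sum by $\mathbbm{1}_{[0,t_n\tau)}$ rather than applying convexity term by term.

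Where your argument genuinely goes further is the $\mathcal{F}^{X_{\beta,T}}$-measurability. The paper simply asserts that it ``follows from'' the explicit formulas \eqref{eq:cylinder-gBm}, \eqref{eq:derivative-cylinder-fction}, \eqref{eq:gradient-repretion}, \eqref{eq:gradient-repretion-dot}; but those formulas contain the factors $h(t_i\tau)$ and the endpoints $t_i\tau$, which depend on $\tau$ alone, and the projection $(w,\tau)\mapsto\tau$ is \emph{not} literally $\mathcal{F}^{X_{\beta,T}}$-measurable (on the $\mathbb{P}_{1/2}$-null set $\{w\equiv 0\}$ the entire process $X_{\beta,T}$ vanishes for every $\tau$). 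Your quadratic-variation argument---that along dyadic partitions $[X_{\beta,T}]_T=T\tau$ holds $\mathbb{P}_{1/2}\otimes\mathbb{P}_{Y_\beta}$-a.s., so $\tau$ coincides a.s.\ with an $\mathcal{F}^{X_{\beta,T}}$-measurable functional of the path---supplies exactly the missing step needed for the $L^p$ claim, and is a genuine (if small) improvement over the paper's treatment.
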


\begin{proof}
1. Let $F\in\mathcal{F}C_{b}^{\infty}$
be given as in (\ref{eq:smooth-cylinder-ftion}). It follows from \eqref{eq:cylinder-gBm} and \eqref{eq:derivative-cylinder-fction} that $F$ and $\partial_{1,h}F$ are $\mathcal{F}^{X_{\beta,T}}$-measurable.  From the estimate~(\ref{eq:est-CM-norm})
we derive
\begin{eqnarray*}
\|\partial_{1,h}F\|_{p}^{p} & = & \int_{\mathbb{W}\times\mathbb{R}^{+}}|(\partial_{1,h}F)(w,\tau)|^{p}\,\mathrm{d}(\mathbb{P}_{1/2}\otimes\mathbb{P}_{Y_{\beta}})(w,\tau)\\
 & \le & \int_{\mathbb{W}\times\mathbb{R}^{+}}\left|\sum_{i=1}^{n}\|\partial_{i}f\|_{\infty}\sqrt{t_{i}\tau}\|h\|_{\mathcal{H}_{1/2}}\right|^{p}\mathrm{d}(\mathbb{P}_{1/2}\otimes\mathbb{P}_{Y_{\beta}})(w,\tau)\\
 & \le & n^{p-1}\|h\|_{\mathcal{H}_{1/2}}^{p}\sum_{i=1}^{n}\|\partial_{i}f\|_{\infty}^{p}t_{i}^{p/2}\int_{0}^{\infty}\tau^{p/2}\,\mathrm{d}\mathbb{P}_{Y_{\beta}}(\tau).\\
 & = & n^{p-1}\|h\|_{\mathcal{H}_{1/2}}^{p}\sum_{i=1}^{n}\|\partial_{i}f\|_{\infty}^{p}t_{i}^{p/2}\frac{\Gamma(p/2+1)}{\Gamma(\beta p/2+1)}<\infty.
\end{eqnarray*}
The second inequality is the convexity inequality and the last equality
follows from (\ref{eq:Mbeta-moments}).

\noindent 2. The $\mathcal{F}^{X_{\beta,T}}$-measurability of $\nabla_1 F$  and $\dot{\nabla}_1 F$ can be easily derived from \eqref{eq:gradient-repretion} and \eqref{eq:gradient-repretion-dot}. From (\ref{eq:gradient-repretion-dot}) and again the convexity inequality
we infer 
\begin{eqnarray*}
\|\nabla_1 F(w,\tau)\|_{\mathcal{H}_{1/2}}^{2} & = & \int_{0}^{\infty}|\dot{\nabla}_1 F(w,\tau)(t)|^{2}\,\mathrm{d}t\\
 & = & \int_{0}^{\infty}\left|\sum_{i=1}^{n}\partial_{i}f\big(w(t_{1}\tau),\dots,w(t_{n}\tau)\big)\mathbbm{1}_{[0,t_{i}\tau)}(t)\right|^{2}\mathrm{d}t\\
 & \le & n\sum_{i=1}^{n}t_{i}\tau\left(\partial_{i}f\big(w(t_{1}\tau),\dots,w(t_{n}\tau)\big)\right)^{2}.
\end{eqnarray*}
This implies
\begin{eqnarray*}
\|\nabla_1 F(w,\tau)\|_{\mathcal{H}_{1/2}}^{p} & \le & \left(n\sum_{i=1}^{n}t_{i}\tau\left(\partial_{i}f\big(w(t_{1}\tau),\dots,w(t_{n}\tau)\big)\right)^{2}\right)^{p/2}\\
 & \le & n^{p/2\vee(p-1)}\sum_{i=1}^{n}(t_{i}\tau)^{p/2}\|\partial_{i}f\|_\infty^p.
\end{eqnarray*}
Finally, the norm of $\|\nabla_1 F\|_{L_{\mathcal{H}_{1/2}}^{p}(\mathcal{F}^{X_{\beta,T}},\mathbb{P}_{1/2}\otimes\mathbb{P}_{Y_{\beta}})}$
is computed as
\begin{eqnarray*}
\|\nabla_1 F\|_{L_{\mathcal{H}_{1/2}}^{p}(\mathcal{F}^{X_{\beta,T}},\mathbb{P}_{1/2}\otimes\mathbb{P}_{Y_{\beta}})}^{p} 
%& = & \int_{\Omega_{T}^{1/2}}\|(\nabla_1 F)(w(\cdot\tau))\|_{\mathcal{H}_{1/2}}^{p}\,\mathrm{d}\mu_{\beta,T}(w(\cdot\tau))\\
 & = & \int_{\mathbb{W}\times\mathbb{R}^{+}}\|(\nabla_1 F)(w,\tau)\|_{\mathcal{H}_{1/2}}^{p}\,\mathrm{d}(\mathbb{P}_{1/2}\otimes \mathbb{P}_{Y_{\beta}})(w,\tau)\\
 & \le & n^{p/2\vee(p-1)}\int_{\mathbb{W}\times\mathbb{R}^{+}}\sum_{i=1}^{n}(t_{i}\tau)^{p/2}\|\partial_{i}f\|_\infty^p\,\mathrm{d}(\mathbb{P}_{1/2}\otimes \mathbb{P}_{Y_{\beta}})(w,\tau)\\
 & \le & n^{p/2\vee(p-1)}\sum_{i=1}^{n}\|\partial_{i}f\|_{\infty}^{p}t_{i}^{p/2}\int_{\mathbb{R}^{+}}\tau^{p/2}\mathrm{d}\mathbb{P}_{Y_{\beta}}(\tau)\\
 & = & n^{p/2\vee(p-1)}\sum_{i=1}^{n}\|\partial_{i}f\|_{\infty}^{p}t_{i}^{p/2}\frac{\Gamma(p/2+1)}{\Gamma(\beta p/2+1)}<\infty.
\end{eqnarray*}
This completes the proof.
\end{proof}
To investigate the closability of the directional derivative operator $\partial_{1,h}$ and $\nabla_1$ on $L^p(\mathbb{P}_H\otimes\mathbb{P}_{Y_\beta})$, $p\ge1$, we need the integration by parts formula for the gBm $X_{\beta,T}$, $T>0$. Below the expectation is taken w.r.t.~the probability measure $\mathbb{P}_{1/2}\otimes\mathbb{P}_{Y_\beta}$.

\begin{thm}\label{thm:IbP-gBm}
    Let $T>0$ be given. For any $h\in\mathcal{H}_{1/2}$ and $F,G\in\mathcal{F}C_b^\infty$, we have
    \begin{equation}\label{eq:IbP-gBm}
    \mathbb{E}\big[G\partial_{1,h}F\big]=\mathbb{E}\big[F\partial_{1,h}^*G\big],
    \end{equation}
    where 
    \[
    \partial_{1,h}^*G=-\partial_{1,h}G+G \left( \int_0^{\cdot} \dot{h}(t)\,\mathrm{d}W^{1/2}(t) \right)(T\mathcal{Y}_\beta).
    \]
\end{thm}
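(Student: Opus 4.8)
The plan is to obtain the formula first for each fixed value of the subordinator variable $\tau$, by differentiating the Cameron--Martin identity in the shift parameter, and then to integrate over $\tau$ with respect to $\mathbb{P}_{Y_\beta}$.

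Fix $\tau\in\mathbb{R}^+$ and $\varepsilon\in\mathbb{R}$. Since a product of two functions in $\mathcal{F}C_b^\infty$ is again of that form, I would apply the finite-time Cameron--Martin formula \eqref{eq:CMF-fBm-0T} (with $H=1/2$ and time horizon $T\tau$, cf.\ Remark~\ref{rem:RKHS-finite-time}) to the bounded smooth functional $w\mapsto (FG)(w,\tau)$ on $\mathbb{W}_{T\tau}$ shifted by $\varepsilon h$. Writing $M(w,\tau):=\big(\int_0^\cdot\dot h(t)\,\mathrm{d}W^{1/2}(t)\big)(T\tau)(w)$ and $c(\tau):=\int_0^{T\tau}|\dot h(s)|^2\,\mathrm{d}s$, this reads
\[
\int_{\mathbb{W}}(FG)(w+\varepsilon h,\tau)\,\mathbb{P}_{1/2}(\mathrm{d}w)=\int_{\mathbb{W}}(FG)(w,\tau)\,e^{\varepsilon M(w,\tau)-\frac{\varepsilon^2}{2}c(\tau)}\,\mathbb{P}_{1/2}(\mathrm{d}w).
\]

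Next I would differentiate both sides in $\varepsilon$ at $\varepsilon=0$. On the left, the chain rule and the definition of $\partial_{1,h}$ give $\partial_{1,h}F\cdot G+F\cdot\partial_{1,h}G$; on the right, $\frac{\mathrm{d}}{\mathrm{d}\varepsilon}e^{\varepsilon M-\frac{\varepsilon^2}{2}c}\big|_{\varepsilon=0}=M$. The resulting per-$\tau$ identity
\[
\int_{\mathbb{W}}\big(\partial_{1,h}F\cdot G+F\,\partial_{1,h}G\big)\,\mathbb{P}_{1/2}(\mathrm{d}w)=\int_{\mathbb{W}}FG\,M\,\mathbb{P}_{1/2}(\mathrm{d}w)
\]
rearranges to $\int_{\mathbb{W}}G\,\partial_{1,h}F\,\mathbb{P}_{1/2}(\mathrm{d}w)=\int_{\mathbb{W}}F\big(-\partial_{1,h}G+GM\big)\,\mathbb{P}_{1/2}(\mathrm{d}w)$, which is precisely the claimed identity with $\partial_{1,h}^*G$ for fixed $\tau$. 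Integrating over $\tau$ against $\mathbb{P}_{Y_\beta}$ and applying Fubini then yields \eqref{eq:IbP-gBm}.

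The main obstacle is justifying the differentiation under the integral sign. On the left this is routine: for fixed $\tau$ the difference quotient of the cylinder function $(FG)(\cdot,\tau)$ is bounded, uniformly in $\varepsilon\in(0,1]$, by a $w$-independent constant (via the mean value theorem and boundedness of the partial derivatives of the generating $C_b^\infty$ function), so dominated convergence applies. On the right the density grows like $e^{\varepsilon M}$ with $M$ Gaussian under $\mathbb{P}_{1/2}$; here I would dominate the difference quotient by $\|FG\|_\infty\sup_{|\varepsilon|\le1}|M-\varepsilon c(\tau)|\,e^{\varepsilon M}$ and use the finiteness of the Gaussian exponential moments of $M$ to produce an integrable dominant. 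For the concluding $\tau$-integration, $F\partial_{1,h}^*G$ is integrable because $F,G$ are bounded, $\partial_{1,h}F,\partial_{1,h}G\in L^p(\mathbb{P}_{1/2}\otimes\mathbb{P}_{Y_\beta})$ by Proposition~\ref{prop:direct-deriv-Lp}, and $\int_{\mathbb{R}^+}\!\int_{\mathbb{W}}M^2\,\mathbb{P}_{1/2}(\mathrm{d}w)\,\mathbb{P}_{Y_\beta}(\mathrm{d}\tau)\le\|h\|_{\mathcal{H}_{1/2}}^2<\infty$ by the Wiener isometry together with \eqref{eq:Mbeta-moments}; hence $GM\in L^2(\mathbb{P}_{1/2}\otimes\mathbb{P}_{Y_\beta})$ and Fubini is legitimate.
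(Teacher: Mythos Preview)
Your proof is correct and follows essentially the same approach as the paper: both differentiate the Cameron--Martin identity for the shifted product $FG$ at $\varepsilon=0$ and invoke dominated convergence. The only cosmetic difference is that the paper applies the already-established Cameron--Martin formula for gBm (Theorem~\ref{thm:CM-gBm}) on $\mathbb{W}\times\mathbb{R}^+$ and differentiates under the double integral, whereas you fix $\tau$, use the classical formula on $\mathbb{W}_{T\tau}$, differentiate, and then integrate in $\tau$; your justification of the exchange of limit and integral, and of the final Fubini step via the $L^2$-bound on $M$ (the content of Lemma~\ref{lem:h-integral-PYbeta}), is in fact more explicit than the paper's.
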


\begin{proof} 
    Let $T>0$ and $F,G\in\mathcal{F}C_b^\infty$ be given. The Cameron-Martin formula (see Eq.~\eqref{eq:Cameron-Martin-gBm0T}) for the grey Brownian motion says that, for every $\varepsilon\in\mathbb{R}$, we have
\begin{align*}
 & \int_{\mathbb{W}\times\mathbb{R}^{+}}F(w+\varepsilon h,\tau)\, G(w+\varepsilon h,\tau)\,\mathbb{P}_{1/2}(\mathrm{d}w)\,\mathbb{P}_{Y_{\beta}}(\mathrm{d}\tau)\\
 & =\int_{\mathbb{W}\times\mathbb{R}^{+}}F(w,\tau)\, G(w,\tau)\, \mathcal{E}\left(\int_{0}^{.}\varepsilon \dot{h}(s)\,\mathrm{d}W^{1/2}(s)\right)\!(T\tau)\,\mathbb{P}_{1/2}(\mathrm{d}w)\,\mathbb{P}_{Y_{\beta}}(\mathrm{d}\tau).
\end{align*}
We differentiate this equality w.r.t.~$\varepsilon$ and set $\varepsilon=0$. The assumptions on $h$ and $F$ allow us
to interchange the operations of differentiation and integration by using
the dominated convergence theorem, and we obtain the following
\begin{multline*}
\int_{\mathbb{W}\times\mathbb{R}^{+}}\partial_{1,h}F(w,\tau)\,G(w,\tau)\,\mathbb{P}_{1/2}(\mathrm{d}w)\,\mathbb{P}_{Y_{\beta}}(\mathrm{d}\tau) \\ +\int_{\mathbb{W}\times\mathbb{R}^{+}}F(w,\tau)\,\partial_{1,h}G(w,\tau)\,\mathbb{P}_{1/2}(\mathrm{d}w)\,\mathbb{P}_{Y_{\beta}}(\mathrm{d}\tau)\\ =\int_{\mathbb{W}\times\mathbb{R}^{+}}F(w,\tau)\,G(w,\tau)\left(\int_{0}^{T\tau}\dot{h}(s)\,\mathrm{d}W^{1/2}(s)\right)\,\mathbb{P}_{1/2}(\mathrm{d}w)\,\mathbb{P}_{Y_{\beta}}(\mathrm{d}\tau).
\end{multline*}
Note that by Lemma \ref{lem:h-integral-PYbeta}, all the terms in the above equality are well-defined. This concludes the proof. 
\end{proof}

From the identity \eqref{eq:IbP-gBm} we obtain the following theorem.
\begin{thm}\label{thm:closable-direct-derivative}
For every $p,q\geq 1$ and $h\in\mathcal{H}_{1/2}$
the derivative operator $\partial_{1,h}:\mathcal{F}C_{b}^{\infty}\longrightarrow L^{p}(\mathbb{W}\times\mathbb{R}^+,\mathcal{F}^{X_{\beta,T}},\mathbb{P}_{1/2}\otimes\mathbb{P}_{Y_\beta})$
is closable on $L^{q}(\mathbb{W}\times\mathbb{R}^{+},\mathcal{F}^{X_{\beta,T}},\mathbb{P}_{1/2}\otimes\mathbb{P}_{Y_\beta})$.  
\end{thm}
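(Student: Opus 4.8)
The plan is to verify the closability criterion head on. Let $(F_m)\subset\mathcal{F}C_b^\infty$ with $F_m\to 0$ in $L^q(\mathbb{P}_{1/2}\otimes\mathbb{P}_{Y_\beta})$ and $\partial_{1,h}F_m\to\Phi$ in $L^p$; I must show $\Phi=0$. Since each $\partial_{1,h}F_m$ is $\mathcal{F}^{X_{\beta,T}}$-measurable by Proposition~\ref{prop:direct-deriv-Lp}, so is its $L^p$-limit $\Phi$, and it suffices to test $\Phi$ against a family that is total in $L^{p'}(\mathcal{F}^{X_{\beta,T}})$. I first observe that it is enough to treat the endpoint $q=1$: as $\mathbb{P}_{1/2}\otimes\mathbb{P}_{Y_\beta}$ is a probability measure, convergence in $L^q$ implies convergence in $L^1$, so closability on $L^1$ already yields closability on every $L^q$, $q\ge 1$.

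The key device is the integration by parts formula~\eqref{eq:IbP-gBm}. For a fixed test function $G$ it reads $\mathbb{E}[G\,\partial_{1,h}F_m]=\mathbb{E}[F_m\,\partial_{1,h}^*G]$ with $\partial_{1,h}^*G=-\partial_{1,h}G+G\,M$, where $M:=\bigl(\int_0^\cdot\dot h(s)\,\mathrm{d}W^{1/2}(s)\bigr)(T\mathcal{Y}_\beta)$. Because $G$ is bounded, the left-hand side converges to $\mathbb{E}[G\Phi]$, using $\|\partial_{1,h}F_m-\Phi\|_1\le\|\partial_{1,h}F_m-\Phi\|_p\to 0$. Everything thus reduces to showing $\mathbb{E}[F_m\,\partial_{1,h}^*G]\to 0$ and then exploiting totality of the admissible $G$'s. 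When $q>1$ this is transparent: estimate~\eqref{eq:est-CM-norm} together with the $M$-Wright moments~\eqref{eq:Mbeta-moments} gives $\partial_{1,h}G\in\bigcap_{s<\infty}L^s$, and Lemma~\ref{lem:h-integral-PYbeta} gives the same for $M$, whence $\partial_{1,h}^*G\in L^{q'}$ with $q'<\infty$ and $|\mathbb{E}[F_m\,\partial_{1,h}^*G]|\le\|F_m\|_q\,\|\partial_{1,h}^*G\|_{q'}\to 0$. This already signals that the only genuine difficulty sits at the endpoint.

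The main obstacle is therefore exactly $q=1$, where $q'=\infty$: the adjoint $\partial_{1,h}^*G$ carries the Gaussian factor $M$ and the term $h(t_i\mathcal{Y}_\beta)$, both unbounded, so no $G\in\mathcal{F}C_b^\infty$ produces $\partial_{1,h}^*G\in L^\infty$, and the $L^1$--$L^\infty$ pairing cannot be used as it stands. To overcome this I would enlarge the test class by inserting compactly supported cut-offs in the two dangerous directions. Both $\mathcal{Y}_\beta$ (recovered from the quadratic variation $[X_{\beta,T}]_t=t\mathcal{Y}_\beta$) and $M$ (a functional of the path $(X_{\beta,T}(t))_{t\le T}$ after the substitution $u=t\mathcal{Y}_\beta$) are $\mathcal{F}^{X_{\beta,T}}$-measurable, so for $g\in C_c^\infty(\mathbb{R}^n)$ and $\rho,\varphi\in C_c^\infty(\mathbb{R})$ I would use $G=g\bigl(X_{\beta,T}(t_1),\dots,X_{\beta,T}(t_n)\bigr)\rho(\mathcal{Y}_\beta)\varphi(M)$.

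For such $G$ the directional derivative of $M$ is deterministic given $\mathcal{Y}_\beta$, namely $\partial_{1,h}M=\int_0^{T\mathcal{Y}_\beta}|\dot h(s)|^2\,\mathrm{d}s$; differentiating by the product rule, every summand of $\partial_{1,h}^*G$ is bounded, since the factor $\rho(\mathcal{Y}_\beta)$ confines $\mathcal{Y}_\beta$ to a compact set (bounding each $h(t_i\mathcal{Y}_\beta)$ and the deterministic $\int_0^{T\mathcal{Y}_\beta}|\dot h|^2$), while the factor $\varphi(M)$ confines $M$ (so that $\varphi(M)$, $\varphi'(M)$ and $\varphi(M)M$ are bounded). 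Hence $\partial_{1,h}^*G\in L^\infty$, the limit $\mathbb{E}[F_m\,\partial_{1,h}^*G]\to 0$ is legitimate for $F_m\to 0$ in $L^1$, and one obtains $\mathbb{E}[G\Phi]=0$. Letting the supports of $\rho$ and $\varphi$ grow to $\mathbb{R}^+$ and $\mathbb{R}$ and invoking dominated convergence recovers $\mathbb{E}\bigl[\Phi\,g(X_{\beta,T}(t_1),\dots,X_{\beta,T}(t_n))\bigr]=0$ for all $g\in C_b^\infty(\mathbb{R}^n)$; since such smooth cylinder functions are total in $L^{p'}(\mathcal{F}^{X_{\beta,T}})$, this forces $\Phi=0$, establishing closability on $L^1$ and hence on every $L^q$, $q\ge 1$.
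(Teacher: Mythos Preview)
Your core strategy---pair against test functions $G$ via the integration by parts formula~\eqref{eq:IbP-gBm} and then use density of $\mathcal{F}C_b^\infty$---is exactly the paper's. For $q>1$ the two arguments coincide: the paper shows $\partial_{1,h}^*G\in L^r$ for every finite $r$ (BDG for the $M$-term, Proposition~\ref{prop:direct-deriv-Lp} for the $\partial_{1,h}G$-term), takes $r=q'<\infty$, and pairs with $F_n\to 0$ in $L^q$ to obtain $\mathbb{E}[GZ]=0$, hence $Z=0$.

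At the endpoint $q=1$ you go further than the paper, which simply asserts $\partial_{1,h}^*G\in L^{q'}$ with $q'=\infty$ without additional argument; your observation that this fails for $G\in\mathcal{F}C_b^\infty$ (since $M$ is unbounded) is correct, and the cut-off idea via $\rho(\mathcal{Y}_\beta)\varphi(M)$ is a natural repair. There is, however, a gap you should close: the enlarged test functions $G=g\bigl(X_{\beta,T}(t_1),\dots,X_{\beta,T}(t_n)\bigr)\rho(\mathcal{Y}_\beta)\varphi(M)$ lie \emph{outside} $\mathcal{F}C_b^\infty$, so Theorem~\ref{thm:IbP-gBm} as stated does not apply to them. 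You must re-derive the integration by parts identity for this class, essentially by rerunning the proof of Theorem~\ref{thm:IbP-gBm}: apply the Cameron--Martin formula~\eqref{eq:Cameron-Martin-gBm0T} to the bounded measurable functional $FG$ and differentiate at $\varepsilon=0$. This works because under $w\mapsto w+\varepsilon h$ one has $\mathcal{Y}_\beta\mapsto\mathcal{Y}_\beta$ and $M\mapsto M+\varepsilon\int_0^{T\mathcal{Y}_\beta}|\dot h|^2\,\mathrm{d}s$, but note that this transformation of the Wiener integral holds in the $L^2$ (equivalently, almost-sure) sense rather than the pointwise-in-$w$ sense of the paper's definition of $\partial_{1,h}$; a sentence making this precise (or an approximation of $M$ by Riemann sums, in line with the paper's own measurability remark) would complete your argument.
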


\begin{proof}
Let $p,q\ge1$ be given. We have to show that if $(F_{n})_{n\in\mathbb{N}}\subset\mathcal{F}C_{b}^{\infty}$
is a sequence such that $F_{n}\longrightarrow0$ in $L^{q}(\mathcal{F}^{X_{\beta,T}},\mathbb{P}_{1/2}\otimes\mathbb{P}_{Y_\beta})$
and $\partial_{1,h}F_{n}\longrightarrow Z$ in $L^{p}(\mathcal{F}^{X_{\beta,T}},\mathbb{P}_{1/2}\otimes\mathbb{P}_{Y_\beta})$, then
$Z=0$ in $L^{p}(\mathcal{F}^{X_{\beta,T}},\mathbb{P}_{1/2}\otimes\mathbb{P}_{Y_\beta})$.

Let $G\in\mathcal{F}C_{b}^{\infty}$
be fixed and $r\ge1$. According to Proposition~\ref{prop:direct-deriv-Lp}-1
we have $\partial_{1,h}G\in L^{r}(\mathcal{F}^{X_{\beta,T}},\mathbb{P}_{1/2}\otimes\mathbb{P}_{Y_\beta})$. In addition, using the Burkholder--Davis--Gundy
inequality, there exists a constant dependent  only in $r$ such that  
\begin{eqnarray*}
 & & \int_{0}^{\infty}\int_{\mathbb{W}}\left|\left(\int_{0}^{T\tau}\dot{h}(t)\,\mathrm{d}W^{1/2}(t)\right)(w)\right|^{r}\,\mathrm{d}\mathbb{P}_{1/2}(w)\,\mathrm{d}\mathbb{P}_{Y_{\beta}}(\tau)\\
 & \le & C_{r}\int_{0}^{\infty}\left(\int_{0}^{T\tau}|\dot{h}(t)|^{2}\,\mathrm{d}t\right)^{r/2}\,\mathrm{d}\mathbb{P}_{Y_{\beta}}(\tau)\\
 & \le & C_{r}\|\dot{h}\|_{2}^{r/2}=C_{r}\|h\|_{\mathcal{H}_{1/2}}^{r/2}.
\end{eqnarray*}
It is well known that the Wiener integral $\int_{0}^{\cdot}\dot{h}(t)\,\mathrm{d}W^{1/2}(t)$ can be obtained as the limit in probability of the Riemann sums; see \cite{Revuz-Yor-94}. Consequently, $\left(\int_{0}^{\cdot}\dot{h}(t)\,\mathrm{d}W^{1/2}(t)\right)(T\mathcal{Y}_{\beta})$ is $\mathcal{F}^{X_{\beta,T}}$-measurable.
Thus, 
\[
\partial_{1,h}^{*}G=-\partial_{1,h}G+G\left(\int_{0}^{\cdot}\dot{h}(t)\,\mathrm{d}W^{1/2}(t)\right)(T\mathcal{Y}_{\beta})\in L^{r}(\mathcal{F}^{X_{\beta,T}},\mathbb{P}_{1/2}\otimes\mathbb{P}_{Y_\beta}).
\]
Therefore, the integration by parts (\ref{eq:IbP-gBm}) and the fact that $\partial_{1,h}^*G\in L^{q'}(\mathcal{F}^{X_{\beta,T}},\mathbb{P}_{1/2}\otimes\mathbb{P}_{Y_\beta})$, $q'$ is the conjugate exponent of $q$, yields 
\[
\mathbb{E}[GZ]=\lim_{n\to\infty}\mathbb{E}[G\partial_{1,h}F_{n}]=\lim_{n\to\infty}\mathbb{E}[F_{n}\partial_{1,h}^{*}G]=0.
\]
We can deduce that $Z=0$ from the facts that the $\sigma$-algebra $\mathcal{F}^{X_{\beta,T}}$ is generated by the elements of $\mathcal{F}C_{b}^{\infty}$ and the density of  $\mathcal{F}C_{b}^{\infty}$ in $L^{p}(\mathcal{F}^{X_{\beta,T}},\mathbb{P}_{1/2}\otimes\mathbb{P}_{Y_\beta})$. 
\end{proof}
The closability of the operator $\nabla_1$ on $L^p(\mathbb{P}_H\otimes\mathbb{P}_{Y_\beta})$ is based on that of $\partial_{1,h}$. This is the subject of the following theorem.
\begin{thm}\label{thm:clasable-gradient}
For every $p,q\geq 1$, the operator $\nabla_1:\mathcal{F}C_{b}^{\infty}\longrightarrow L_{\mathcal{H}_{1/2}}^{p}(\mathcal{F}^{X_{\beta,T}},\mathbb{P}_{1/2}\otimes\mathbb{P}_{Y_\beta})$
is closable in $L^{q}(\mathbb{W}\times\mathbb{R}^{+},\mathcal{F}^{X_{\beta,T}},\mathbb{P}_{1/2}\otimes\mathbb{P}_{Y_\beta})$.
\end{thm}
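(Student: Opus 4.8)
The plan is to deduce the closability of $\nabla_1$ from the already established closability of the directional derivatives $\partial_{1,h}$ (Theorem~\ref{thm:closable-direct-derivative}), exploiting the defining relation \eqref{eq:derivative-repretion} between the two operators. First I would fix a sequence $(F_n)_{n\in\mathbb{N}}\subset\mathcal{F}C_b^\infty$ with $F_n\to 0$ in $L^q(\mathcal{F}^{X_{\beta,T}},\mathbb{P}_{1/2}\otimes\mathbb{P}_{Y_\beta})$ and $\nabla_1 F_n\to G$ in $L_{\mathcal{H}_{1/2}}^{p}(\mathcal{F}^{X_{\beta,T}},\mathbb{P}_{1/2}\otimes\mathbb{P}_{Y_\beta})$; the goal is to show $G=0$.

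For any fixed $h\in\mathcal{H}_{1/2}$, the Cauchy--Schwarz inequality in $\mathcal{H}_{1/2}$ gives the pointwise bound $|(\nabla_1 F_n-G,h)_{\mathcal{H}_{1/2}}|\le \|\nabla_1 F_n-G\|_{\mathcal{H}_{1/2}}\,\|h\|_{\mathcal{H}_{1/2}}$, so the $L^p$-convergence of $\nabla_1 F_n$ to $G$ yields $(\nabla_1 F_n,h)_{\mathcal{H}_{1/2}}\to (G,h)_{\mathcal{H}_{1/2}}$ in $L^p$. By \eqref{eq:derivative-repretion} we have $(\nabla_1 F_n,h)_{\mathcal{H}_{1/2}}=\partial_{1,h}F_n$, hence $\partial_{1,h}F_n\to (G,h)_{\mathcal{H}_{1/2}}$ in $L^p$. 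Since $F_n\to 0$ in $L^q$ and $\partial_{1,h}$ is closable by Theorem~\ref{thm:closable-direct-derivative}, I conclude $(G,h)_{\mathcal{H}_{1/2}}=0$ in $L^p$, i.e.\ $(G(w,\tau),h)_{\mathcal{H}_{1/2}}=0$ for $\mathbb{P}_{1/2}\otimes\mathbb{P}_{Y_\beta}$-a.e.\ $(w,\tau)$. Here I note that $(G,h)_{\mathcal{H}_{1/2}}$ is $\mathcal{F}^{X_{\beta,T}}$-measurable, being an $L^p$-limit of the $\mathcal{F}^{X_{\beta,T}}$-measurable functions $\partial_{1,h}F_n$, whose integrability is guaranteed by Proposition~\ref{prop:direct-deriv-Lp}.

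The step requiring care is passing from ``$(G(w,\tau),h)_{\mathcal{H}_{1/2}}=0$ a.e.\ for each fixed $h$'' to ``$G=0$ a.e.'', because the exceptional null set a priori depends on $h$. Here I would invoke the separability of $\mathcal{H}_{1/2}$: by the isometric identification $\mathcal{H}_{1/2}\cong L^2(\lambda)$ of Theorem~\ref{thm:Cameron-martin-space-fBm} and the separability of $L^2(\lambda)$, the space $\mathcal{H}_{1/2}$ is itself separable. Choosing a countable dense set $\{h_k\}_{k\in\mathbb{N}}\subset\mathcal{H}_{1/2}$ and letting $N_k$ be the null set on which $(G(\cdot,\cdot),h_k)_{\mathcal{H}_{1/2}}\ne 0$, the union $\bigcup_k N_k$ is still null; off this set one has $(G(w,\tau),h_k)_{\mathcal{H}_{1/2}}=0$ for every $k$, and by density $(G(w,\tau),h)_{\mathcal{H}_{1/2}}=0$ for all $h\in\mathcal{H}_{1/2}$, forcing $G(w,\tau)=0$ in $\mathcal{H}_{1/2}$. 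Thus $G=0$ in $L_{\mathcal{H}_{1/2}}^{p}$, which is precisely closability.

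The main obstacle is essentially the separability bookkeeping just described: the algebraic reduction via \eqref{eq:derivative-repretion} is routine, and all integrability needed to make the inner products and the $L^p$-limits meaningful is already supplied by Proposition~\ref{prop:direct-deriv-Lp}, so the only genuinely nontrivial point is organizing the $h$-dependent null sets into a single null set through a countable dense family.
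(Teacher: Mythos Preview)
Your proof is correct and follows essentially the same strategy as the paper's: reduce closability of $\nabla_1$ to that of $\partial_{1,h}$ via \eqref{eq:derivative-repretion}, then pass from ``$(G,h)_{\mathcal{H}_{1/2}}=0$ a.e.\ for each fixed $h$'' to ``$G=0$ a.e.'' The only cosmetic difference is in this last step: you use separability of $\mathcal{H}_{1/2}$ and a countable dense family directly, whereas the paper tests against the dense set $S(\mathcal{H}_{1/2})=\{\sum_i G_ih_i : G_i\in\mathcal{F}C_b^\infty\}$ of simple $\mathcal{H}_{1/2}$-valued functions; both arguments are standard and equally valid.
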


\begin{proof}
Let $p,q\ge1$ and $(F_{n})_{n\in\mathbb{N}}\subset\mathcal{F}C_{b}^{\infty}$
be a sequence such that $F_{n}\longrightarrow0$ in $L^{q}(\mathcal{F}^{X_{\beta,T}},\mathbb{P}_{1/2}\otimes\mathbb{P}_{Y_\beta})$
and $\nabla_1 F_{n}\longrightarrow Z$ in $L_{\mathcal{H}_{1/2}}^{p}(\mathcal{F}^{X_{\beta,T}},\mathbb{P}_{1/2}\otimes\mathbb{P}_{Y_\beta})$.
We have to show that $Z=0$ in $L_{\mathcal{H}_{1/2}}^{p}(\mathcal{F}^{X_{\beta,T}},\mathbb{P}_{1/2}\otimes\mathbb{P}_{Y_\beta})$.
First, notice that for any $h\in\mathcal{H}_{1/2}$ we have
\[
\partial_{1,h}F_{n}=(\nabla_1 F_{n},h)_{\mathcal{H}_{1/2}}\longrightarrow(Z,h)_{\mathcal{H}_{1/2}},\;\mathrm{in}\;L^{p}(\mathcal{F}^{X_{\beta,T}},\mathbb{P}_{1/2}\otimes\mathbb{P}_{Y_\beta})\;\mathrm{as}\;n\to\infty.
\]
Since $\partial_{1,h}$ is closable (cf.~Theorem~\ref{thm:closable-direct-derivative}),
then $(Z,h)_{\mathcal{H}_{1/2}}=0$ in $L^{p}(\mathcal{F}^{X_{\beta,T}},\mathbb{P}_{1/2}\otimes\mathbb{P}_{Y_\beta})$. 
Hence,
\[
\mathbb{E}[G(Z,h)_{\mathcal{H}_{1/2}}]=0,\quad\forall G\in\mathcal{F}C_{b}^{\infty}.
\]
By linearity we obtain 
\[
\mathbb{E}\left[\left(Z,\sum_{i=1}^n G_ih_i \right)_{\mathcal{H}_{1/2}}\right]=0,
\]
for any element of the set 
\[
S(\mathcal{H}_{1/2}):=\left\{ \sum_{i=1}^{n}G_{i}h_{i}\,\middle|\, n\in\mathbb{N},\, G_{i}\in\mathcal{F}C_{b}^{\infty}\right\}, 
\]
where $\{h_{i}\in\mathcal{H}_{1/2},\;i\in\mathbb{N}\}$ is an orthonormal
basis of $\mathcal{H}_{1/2}$. Since $S(\mathcal{H}_{1/2})$ is dense in $L_{\mathcal{H}_{1/2}}^{p}(\mathcal{F}^{X_{\beta,T}},\mathbb{P}_{1/2}\otimes\mathbb{P}_{Y_\beta})$, 
we conclude that $Z=0$ in $L_{\mathcal{H}_{1/2}}^{p}(\mathcal{F}^{X_{\beta,T}},\mathbb{P}_{1/2}\otimes\mathbb{P}_{Y_\beta})$.
\end{proof}

\subsection{Derivative Operator for Generalized Grey Brownian Motion}\label{subsec:gradient-ggBm}

We intend to broaden the findings from Subsection~\ref{subsec:gradient-gBm} to the class of processes $X_{\beta,2H,T}$, $T>0$, introduced in Subsection~\ref{subsec:canonical-real-ggBm}.

We fix $H\in(1/2,1)$, $T>0$, and consider the class $\mathcal{F}C_{b,H}^\infty$ of measurable functions on $\big(\mathbb{W}\times\mathbb{R}^{+},\mathcal{B}(\mathbb{W})\otimes\mathcal{B}(\mathbb{R}^{+}),\mathbb{P}_{H}\otimes\mathbb{P}_{Y_{\beta}}\big)$ of the form
\begin{equation}\label{eq:smooth-cylinder-ftionH}
F\big(w,\tau\big):=f\big(w(t_{1}\tau_H),\ldots,w( t_{n}\tau_H)\big),\quad w\in\mathbb{W},\tau\in\mathbb{R}^+,
\end{equation}
where $n\in\mathbb{N}$, $0<t_{1}<\dots<t_{n}\le T$, $\tau_H=\tau^{1/(2H)}$, and $f\in C_{b}^\infty(\mathbb{R}^{n})$.
Note that each $F\in \mathcal{F}C_{b,H}^{\infty}$ of the form \eqref{eq:smooth-cylinder-ftionH} is represented as 
\begin{equation}\label{eq:cylinder-ggBm}
F(w,\tau)=f\big(X_{\beta,2H,T} (t_1)(w,\tau),\dots ,X_{\beta,2H,T} (t_n)(w,\tau)\big).
\end{equation} Moreover, we can easily see that for every $F\in \mathcal{F}C_{b,H}^\infty$ and $h\in \mathcal{H}_{H}$ the partial derivative $\partial_{1,h}F$ in the direction $h$ exists and we have
\begin{equation}\label{eq:derivative-cylinder-fction-ggBm}
    \partial_{1,h}F(w,\tau)=\sum_{i=1}^n\partial_{i} f\big(w(t_1\tau_H),\dots,w( t_n\tau_H)\big)h(t_i\tau_H), \quad w\in\mathbb{W},\tau\geq0. 
\end{equation}

The next proposition shows the two basic properties of the partial derivative operator $\partial_{1,h}$, $h\in\mathcal{H}_H$.
\begin{prop}
\label{prop:cont-derivative-ggBm}Let $F\in\mathcal{F}C_{b,H}^\infty$
be given. Then, for any $w\in\mathbb{W}$,
$\tau\ge0$, 
\begin{enumerate}
\item the map
\[
\mathcal{H}_{H}\ni h\mapsto(\partial_{1,h}F)(w,\tau)\in \mathbb{R}
\]
is a bounded linear functional on $\mathcal{H}_{H}$,
\item there exists a unique element $\nabla_1 F(w,\tau)\in\mathcal{H}_{H}$
given, for any $t\ge0$, by 
\begin{equation}\label{eq:gradient-repretion-ggBm}
\big(\nabla_1 F(w,\tau)\big)(t)=\sum_{i=1}^{n}\partial_{i}f\big(w(t_{1}\tau_H),\dots,w(t_{n}\tau_H)\big)R_H(t_{i}\tau_H,t),
\end{equation}
and 
\begin{equation}\label{eq:gradient-repretion-dot-ggBm}
\big(\dot{\nabla}_1 F(w,\tau)\big)(t)=\sum_{i=1}^{n}\partial_{i}f\big(w(t_{1}\tau_H),\dots,w(t_{n}\tau_H)\big)K_H(t_{i}\tau_H,t)
\end{equation}
satisfying 
\begin{equation}\label{eq:derivative-repretion-ggBm}
(\partial_{1,h}F)(w,\tau)=\big(\nabla_1 F(w,\tau),h\big)_{\mathcal{H}_{H}},\quad w\in\mathbb{W},\;\tau\ge0.
\end{equation}
\end{enumerate}
\end{prop}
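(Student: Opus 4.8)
The plan is to reduce both assertions to the reproducing property of the kernel $R_H$ on $\mathcal{H}_H$, namely that for every $h\in\mathcal{H}_H$ and every $s\ge0$,
\begin{equation*}
h(s)=\big(h,R_H(s,\cdot)\big)_{\mathcal{H}_H}.
\end{equation*}
I would obtain this directly from the isometry $\mathcal{K}_H$: writing $h=\mathcal{K}_H\dot h$ with $\dot h=\mathcal{K}_H^{-1}h$ (Remark~\ref{rem:Cameron-Martin-space-fBm}-1) and using \eqref{eq:CalK-K-equal-R} in the form $\mathcal{K}_H^{-1}\big(R_H(s,\cdot)\big)=K_H(s,\cdot)$, the inner product transports to $L^2(\lambda)$ as $\big(\dot h,K_H(s,\cdot)\big)_2=\int_0^s K_H(s,r)\dot h(r)\,\mathrm{d}r=h(s)$, where the last equality is the representation \eqref{eq:h-rep-fBm}. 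In particular each $R_H(s,\cdot)$ lies in $\mathcal{H}_H$, being an element of $\mathfrak{R}_H$.

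For part (1), linearity of $h\mapsto(\partial_{1,h}F)(w,\tau)$ is immediate from \eqref{eq:derivative-cylinder-fction-ggBm}. Substituting the reproducing identity for each factor $h(t_i\tau_H)$ and pulling the finite sum inside the inner product gives
\begin{equation*}
(\partial_{1,h}F)(w,\tau)=\left(h,\;\sum_{i=1}^n\partial_i f\big(w(t_1\tau_H),\dots,w(t_n\tau_H)\big)R_H(t_i\tau_H,\cdot)\right)_{\mathcal{H}_H}.
\end{equation*}
The Cauchy--Schwarz inequality then bounds the functional by $\|h\|_{\mathcal{H}_H}$ times the $\mathcal{H}_H$-norm of the bracketed element, which is finite because $f\in C_b^\infty$ makes each $\partial_i f$ bounded and each $R_H(t_i\tau_H,\cdot)\in\mathcal{H}_H$; this proves boundedness.

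For part (2), the displayed identity already exhibits the Riesz representative: the Riesz representation theorem applied to the bounded linear functional of part (1) yields a unique $\nabla_1 F(w,\tau)\in\mathcal{H}_H$ satisfying \eqref{eq:derivative-repretion-ggBm}, and it must coincide with $\sum_{i=1}^n\partial_i f(\cdots)R_H(t_i\tau_H,\cdot)$, which is \eqref{eq:gradient-repretion-ggBm}. To obtain \eqref{eq:gradient-repretion-dot-ggBm} I would apply the isometry $\mathcal{K}_H^{-1}$ termwise, using again $\mathcal{K}_H^{-1}\big(R_H(s,\cdot)\big)=K_H(s,\cdot)$, so that $\dot{\nabla}_1 F(w,\tau)=\sum_{i=1}^n\partial_i f(\cdots)K_H(t_i\tau_H,\cdot)$.

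I do not expect a serious obstacle, as the argument rests entirely on the $\mathcal{K}_H$-isometry and the identification of $\mathcal{H}_H$ already established in Section~\ref{sec:Preliminaries}. The abstract reproducing-kernel viewpoint replaces the explicit $H=1/2$ computation of Proposition~\ref{prop:cont-derivative}, which relied on the elementary representation \eqref{eq:1/2-rep} and the values $R_{1/2}(t_i\tau,t)=(t_i\tau)\wedge t$, $K_{1/2}(t_i\tau,\cdot)=\mathbbm{1}_{[0,t_i\tau)}$; here the same structure is handled uniformly for all $H\in(1/2,1)$, the only point needing slight care being the transport of the inner product under $\mathcal{K}_H$, which is exactly the content of the commutative diagram in Fig.~\ref{diagram}.
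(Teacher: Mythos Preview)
Your proposal is correct. The paper's proof follows essentially the same outline---linearity is obvious, boundedness via Cauchy--Schwarz, existence and identification of $\nabla_1 F$ via Riesz---but the organization differs in a minor, genuine way. The paper bounds each $|h(t_i\tau_H)|$ directly through the integral representation \eqref{eq:h-rep-fBm} and Cauchy--Schwarz in $L^2(\lambda)$, obtaining the explicit estimate $|(\partial_{1,h}F)(w,\tau)|\le\sum_i\|\partial_i f\|_\infty\, t_i^H\tau^{1/2}\|h\|_{\mathcal{H}_H}$; it then identifies $\dot\nabla_1 F$ first (by expanding $h(t_i\tau_H)$ as an $L^2$ integral against $\dot h$) and obtains $\nabla_1 F$ afterwards via \eqref{eq:CalK-K-equal-R}. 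You instead invoke the reproducing property $h(s)=(h,R_H(s,\cdot))_{\mathcal{H}_H}$ from the outset, which simultaneously exhibits the Riesz representative \eqref{eq:gradient-repretion-ggBm} and gives boundedness in one stroke, and only then pass to $\dot\nabla_1 F$ via $\mathcal{K}_H^{-1}$. Your route is a bit more streamlined; the paper's has the advantage that its explicit bound (with the factor $t_i^H\tau^{1/2}$) is reused verbatim in the $L^p$ estimates of Proposition~\ref{prop:direct-deriv-Lp-ggBm}.
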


\begin{proof}
1. Let $F\in\mathcal{F}C_{b,H}^\infty$ be given. The linearity of
the map is obvious. For every $h\in\mathcal{H}_{H}$, using the
representation (\ref{eq:h-rep-fBm}) we have
\begin{eqnarray*}
\big|(\partial_{1,h}F)(w,\tau)\big| & = & \left|\sum_{i=1}^{n}\partial_{i}f\big(w(t_{1}\tau_H),\dots,w(t_{n}\tau_H)\big)h(t_{i}\tau_H)\right|\\
 & \le & \sum_{i=1}^{n}\|\partial_{i}f\|_{\infty}\left|\int_{0}^{t_{i}\tau_H}K_H(t_i\tau_H,s)\dot{h}(s)\,\mathrm{d}s\right|.
\end{eqnarray*}
An application of the Cauchy-Schwarz inequality, the fact that
$\|\dot{h}\|_{2}=\|h\|_{\mathcal{H}_{H}}$, and the equality \eqref{eq:cov-integral-kernel} implies
\begin{equation}
\big|(\partial_{1,h}F)(w,\tau)\big| \leq \sum_{i=1}^{n}\|\partial_{i}f\|_{\infty}t_{i}^H\tau^{1/2}
\|h\|_{\mathcal{H}_H}.\label{eq:est-CM-norm-ggBm}
\end{equation}
2. The equality (\ref{eq:derivative-repretion-ggBm}) follows from 1.~and
the Riesz representation theorem. From (\ref{eq:derivative-cylinder-fction-ggBm})
and (\ref{eq:h-rep-fBm}) we obtain 
\begin{eqnarray*}
(\partial_{1,h}F)(w,\tau) & = & \sum_{i=1}^{n}\partial_{i}f\big(w(t_{1}\tau_H),\dots,w(t_{n}\tau_H)\big)h(t_{i}\tau_H)\\
 & = & \sum_{i=1}^{n}\partial_{i}f\big(w(t_{1}\tau_H),\dots,w(t_{n}\tau_H)\big)\int_{0}^{t_{i}\tau_H}K_H(t_i\tau_H,s)\dot{h}(s)\,\mathrm{d}s\\
 & = & \int_{0}^{\infty}\sum_{i=1}^{n}\partial_{i}f\big(w(t_{1}\tau_H),\dots,w(t_{n}\tau_H)\big)K_H(t_i\tau_H,s)\dot{h}(s)\,\mathrm{d}s.
\end{eqnarray*}
Therefore, the equality (\ref{eq:gradient-repretion-dot-ggBm})
is immediate and (\ref{eq:gradient-repretion-ggBm}) follows from \eqref{eq:CalK-K-equal-R}.
\end{proof}

The estimate \eqref{eq:est-CM-norm-ggBm} is used to demonstrate that the directional derivative $\partial_{1,h}F$, $F\in\mathcal{F}C_{b,H}^{\infty}$, $h\in\mathcal{H}_{H}$, is an element of $L^{p}(\mathbb{W}\times\mathbb{R}^+,\mathcal{F}^{X_{\beta,2H,T}},\mathbb{P}_{H}\otimes\mathbb{P}_{Y_\beta})$ 
for any $p\ge1$, where $\mathcal{F}^{X_{\beta,2H,T}}:=\sigma\big(X_{\beta,2H,T}(t),t\in\left[0,T\right]\big)$.
\begin{prop}\label{prop:direct-deriv-Lp-ggBm} Let $F\in\mathcal{F}C_{b,H}^{\infty}$
and $p\ge1$ be given. Then
\begin{enumerate}
\item $\partial_{1,h}F\in L^{p}(\mathbb{W}\times\mathbb{R}^+,\mathcal{F}^{X_{\beta,2H,T}},\mathbb{P}_{H}\otimes\mathbb{P}_{Y_\beta})$, for any $h\in\mathcal{H}_{H}$,

\item $\nabla_1 F\in L_{\mathcal{H}_{H}}^{p}(\mathcal{F}^{X_{\beta,2H,T}},\mathbb{P}_{H}\otimes\mathbb{P}_{Y_\beta}):=L^{p}(\mathbb{W}\times\mathbb{R}^+,\mathcal{F}^{X_{\beta,2H,T}},\mathbb{P}_{H}\otimes\mathbb{P}_{Y_\beta};\mathcal{H}_{H})$. 
\end{enumerate}
\end{prop}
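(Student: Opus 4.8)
The plan is to mirror the proof of Proposition~\ref{prop:direct-deriv-Lp} for the grey Brownian motion, replacing the $H=1/2$ ingredients by their fractional counterparts. The two essential inputs are the pointwise estimate \eqref{eq:est-CM-norm-ggBm} together with the kernel norm identity $\|K_H(s,\cdot)\|_2^2 = R_H(s,s) = s^{2H}$ recorded after \eqref{eq:CalK-K-equal-R}, and the $M$-Wright moment formula \eqref{eq:Mbeta-moments}, which I would apply with $\delta=p/2$. The latter is admissible for every $p\ge 1$ since $p/2\ge 1/2 > -1$, and this is the precise point that guarantees all the integrals below are finite.

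For part~1, I would first observe that $F$ and $\partial_{1,h}F$ are $\mathcal{F}^{X_{\beta,2H,T}}$-measurable, which is immediate from \eqref{eq:cylinder-ggBm} and \eqref{eq:derivative-cylinder-fction-ggBm}. Then I would raise \eqref{eq:est-CM-norm-ggBm} to the $p$-th power, apply the convexity inequality $\big(\sum_i a_i\big)^p \le n^{p-1}\sum_i a_i^p$ to split the sum over $i$, and integrate against $\mathbb{P}_H\otimes\mathbb{P}_{Y_\beta}$. The $w$-integration is trivial, since after bounding $|\partial_i f|$ by $\|\partial_i f\|_\infty$ the integrand depends only on $\tau$, and the $\tau$-integration reduces to $\int_0^\infty \tau^{p/2}\,\mathrm{d}\mathbb{P}_{Y_\beta}(\tau)=\Gamma(p/2+1)/\Gamma(\beta p/2+1)$ by \eqref{eq:Mbeta-moments}. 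This produces the finite bound $n^{p-1}\|h\|_{\mathcal{H}_H}^p\sum_{i=1}^n\|\partial_i f\|_\infty^p\, t_i^{pH}\,\Gamma(p/2+1)/\Gamma(\beta p/2+1)$ and hence membership in $L^p$.

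For part~2, I would start from the expression \eqref{eq:gradient-repretion-dot-ggBm} for $\dot{\nabla}_1 F$ and compute $\|\nabla_1 F(w,\tau)\|_{\mathcal{H}_H}^2 = \|\dot{\nabla}_1 F(w,\tau)\|_2^2$. Applying convexity to the square of the finite sum and using $\int_0^\infty K_H(t_i\tau_H,t)^2\,\mathrm{d}t = (t_i\tau_H)^{2H}$ yields the pointwise bound $\|\nabla_1 F(w,\tau)\|_{\mathcal{H}_H}^2 \le n\sum_{i=1}^n \big(\partial_i f(\cdots)\big)^2 (t_i\tau_H)^{2H}$. Here the subordination exponent enters through the identity $(t_i\tau_H)^{2H}=t_i^{2H}\tau$, valid because $\tau_H=\tau^{1/(2H)}$. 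Raising to the power $p/2$, applying the convexity inequality once more with the combined factor $n^{p/2\vee(p-1)}$, and integrating again reduces to the same moment $\int_0^\infty \tau^{p/2}\,\mathrm{d}\mathbb{P}_{Y_\beta}(\tau)$, giving $\|\nabla_1 F\|_{L^p_{\mathcal{H}_H}}^p \le n^{p/2\vee(p-1)}\sum_{i=1}^n\|\partial_i f\|_\infty^p\, t_i^{pH}\,\Gamma(p/2+1)/\Gamma(\beta p/2+1) <\infty$.

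The argument is essentially routine, and I anticipate no genuine obstacle: the only delicate point is the bookkeeping of exponents, namely verifying that after invoking the kernel norm identity the power of $\tau$ appearing in both parts is exactly $p/2$. This is what aligns the integrand with the admissible range $\delta>-1$ of \eqref{eq:Mbeta-moments} and closes the estimate for all $p\ge 1$ and $0<\beta<1$.
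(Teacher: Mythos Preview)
Your proposal is correct and follows essentially the same route as the paper: both parts use the measurability observations from \eqref{eq:cylinder-ggBm}--\eqref{eq:gradient-repretion-dot-ggBm}, the pointwise bound \eqref{eq:est-CM-norm-ggBm} (respectively the kernel norm identity $\|K_H(t_i\tau_H,\cdot)\|_2^2=(t_i\tau_H)^{2H}=t_i^{2H}\tau$), the convexity inequalities with the factors $n^{p-1}$ and $n^{p/2\vee(p-1)}$, and the moment formula \eqref{eq:Mbeta-moments} with $\delta=p/2$, arriving at exactly the same final bounds. The only detail you leave implicit is the $\mathcal{F}^{X_{\beta,2H,T}}$-measurability of $\nabla_1 F$ in part~2, which the paper reads off directly from \eqref{eq:gradient-repretion-ggBm} and \eqref{eq:gradient-repretion-dot-ggBm}.
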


\begin{proof}
1. Let $F\in\mathcal{F}C_{b,H}^{\infty}$ be given. It follows from \eqref{eq:cylinder-ggBm} and \eqref{eq:derivative-cylinder-fction-ggBm} that $F$ and $\partial_{1,h}F$ are $\mathcal{F}^{X_{\beta,2H,T}}$-measurable. Using the estimate~(\ref{eq:est-CM-norm-ggBm}) we obtain 
\begin{eqnarray*}
\|\partial_{1,h}F\|_{p}^{p}  & \le & \int_{\mathbb{W}\times\mathbb{R}^{+}}\left|\sum_{i=1}^{n}\|\partial_{i}f\|_{\infty}t_{i}^H\tau^{1/2}\|h\|_{\mathcal{H}_{H}}\right|^{p}\mathrm{d}(\mathbb{P}_{H}\otimes\mathbb{P}_{Y_{\beta}})(w,\tau)\\
 & \le & n^{p-1}\|h\|_{\mathcal{H}_{H}}^{p}\sum_{i=1}^{n}\|\partial_{i}f\|_{\infty}^{p}t_{i}^{pH}\int_{0}^{\infty}\tau^{p/2}\,\mathrm{d}\mathbb{P}_{Y_{\beta}}(\tau).\\
 & = & n^{p-1}\|h\|_{\mathcal{H}_{H}}^{p}\sum_{i=1}^{n}\|\partial_{i}f\|_{\infty}^{p}t_{i}^{pH}\frac{\Gamma(p/2+1)}{\Gamma(\beta p/2+1)}<\infty.
\end{eqnarray*}
The second estimate is the convexity inequality, and the last equality
follows from (\ref{eq:Mbeta-moments}).

\noindent 2. The $\mathcal{F}^{X_{\beta,2H,T}}$-measurability of $\nabla_1 F$  and $\dot{\nabla}_1 F$ can be easily derived from \eqref{eq:gradient-repretion-ggBm} and \eqref{eq:gradient-repretion-dot-ggBm}.
From (\ref{eq:gradient-repretion-dot-ggBm}) and again the convexity inequality we infer 
\begin{eqnarray*}
\|\nabla_1 F(w,\tau)\|_{\mathcal{H}_{H}}^{2} & = & \int_{0}^{\infty}\big|\big(\dot{\nabla}_1 F(w,\tau)\big)(t)\big|^{2}\,\mathrm{d}t\\
 & = & \int_{0}^{\infty}\left|\sum_{i=1}^{n}\partial_{i}f\big(w(t_{1}\tau_H),\dots,w(t_{n}\tau_H)\big)K_H(t_i\tau_H,t)\right|^{2}\mathrm{d}t\\
 & \le & n\sum_{i=1}^{n}t_i^{2H}\tau\|\partial_{i}f\|_\infty^2.
\end{eqnarray*}
This implies 
\begin{eqnarray*}
\|\nabla_1 F(w,\tau)\|_{\mathcal{H}_{H}}^{p} & \le & \left(n\sum_{i=1}^{n}t_{i}^{2H}\tau\|\partial_{i}f\|_\infty^2\right)^{p/2}\\
 & \le & n^{p/2\vee(p-1)}\sum_{i=1}^{n}t_{i}^{pH}\tau^{p/2}\|\partial_{i}f\|_\infty^{p}.
\end{eqnarray*}
Finally, the norm of $\|\nabla_1 F\|_{L_{\mathcal{H}_{H}}^{p}(\mathcal{F}^{X_{\beta,2H,T}},\mathbb{P}_{H}\otimes\mathbb{P}_{Y_\beta})}$
is computed as

\begin{eqnarray*}
\|\nabla_1 F\|_{L_{\mathcal{H}_{H}}^{p}(\mathcal{F}^{X_{\beta,2H,T}},\mathbb{P}_{H}\otimes\mathbb{P}_{Y_\beta})}^{p} 
 & = & \int_{W\times\mathbb{R}^{+}}\|(\nabla_1 F)(w,\tau)\|_{\mathcal{H}_{H}}^{p}\,\mathrm{d}(\mathbb{P}_{H}\otimes \mathbb{P}_{Y_{\beta}})(w,\tau)\\
 & \le & n^{p/2\vee(p-1)}\int_{W\times\mathbb{R}^{+}}\sum_{i=1}^{n}t_{i}^{pH}\tau^{p/2}\|\partial_{i}f\|_\infty^2\mathrm{d}(\mathbb{P}_{H}\otimes \mathbb{P}_{Y_{\beta}})(w,\tau)\\
 & \le & n^{p/2\vee(p-1)}\sum_{i=1}^{n}\|\partial_{i}f\|_{\infty}^{p}t_{i}^{pH}\int_{\mathbb{R}^{+}}\tau^{p/2}\mathrm{d}\mathbb{P}_{Y_{\beta}}(\tau)\\
 & = & n^{p/2\vee(p-1)}\sum_{i=1}^{n}\|\partial_{i}f\|_{\infty}^{p}t_{i}^{pH}\frac{\Gamma(p/2+1)}{\Gamma(\beta p/2+1)}<\infty.
\end{eqnarray*}
This completes the proof.
\end{proof}
We now get the integration by parts formula for ggBm that will be used to prove the closability of the derivative operators $\partial_{1,h}$ and $\nabla_1$ on $L^p(\mathbb{P}_H\otimes\mathbb{P}_{Y_\beta})$.

\begin{thm}\label{thm:IbP-ggBm}
    Let $T>0$ be given. For any $h\in\mathcal{H}_{H}$ and $F,G\in\mathcal{F}C_{b,H}^{\infty}$, we have
    \begin{equation}\label{eq:IbP-ggBm}  \mathbb{E}\big[G\partial_{1,h}F\big]=\mathbb{E}\big[F\partial_{1,h}^*G\big],
    \end{equation}
    where
    \[
    \partial_{1,h}^*G=-\partial_{1,h}G+G \left( \int_0^{\cdot} \dot{h}(t)\,\mathrm{d}\widetilde{W}(t) \right)(T\mathcal{Y}_\beta^{1/(2H)}).
    \]
\end{thm}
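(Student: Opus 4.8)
The plan is to mirror exactly the structure of the proof of Theorem~\ref{thm:IbP-gBm} for the grey Brownian motion, replacing the classical Cameron--Martin formula by its fractional counterpart, Theorem~\ref{thm:CM-ggBm}--1, and using the fact that all the relevant quantities have already been shown to be $L^p$-integrable in Proposition~\ref{prop:direct-deriv-Lp-ggBm}. The starting point is the Cameron--Martin identity \eqref{eq:Cameron-Martin-ggBm0T} applied to the product $F\cdot G$ (both in $\mathcal{F}C_{b,H}^\infty$, hence their product is again a bounded smooth cylinder functional of the same type), shifted in the direction $\varepsilon h$ for $h\in\mathcal{H}_H$:
\[
\mathbb{E}\big[(F\cdot G)(w+\varepsilon h,\tau)\big]
=\mathbb{E}\left[(F\cdot G)(w,\tau)\,\mathcal{E}\!\left(\int_0^{\cdot}\varepsilon\dot{h}(s)\,\mathrm{d}\widetilde{W}(s)\right)\!(T\mathcal{Y}_\beta^{1/(2H)})\right],
\]
valid for every $\varepsilon\in\mathbb{R}$, where $\widetilde{W}$ is the standard Brownian motion associated with $W^H$ through \eqref{eq:Bm-associated-fBm}.

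The next step is to differentiate this identity in $\varepsilon$ and evaluate at $\varepsilon=0$. On the left-hand side, differentiation under the integral sign produces $\partial_{1,h}(F\cdot G)=G\,\partial_{1,h}F+F\,\partial_{1,h}G$ by the product rule for the directional derivative. On the right-hand side, differentiating the Dol{\'e}ans exponential at $\varepsilon=0$ yields the stochastic integral $\int_0^{T\mathcal{Y}_\beta^{1/(2H)}}\dot{h}(s)\,\mathrm{d}\widetilde{W}(s)$ as the multiplicative factor, since $\frac{\mathrm{d}}{\mathrm{d}\varepsilon}\big|_{\varepsilon=0}\exp\big(\varepsilon M-\tfrac{\varepsilon^2}{2}\langle M\rangle\big)=M$. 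Collecting terms gives
\[
\mathbb{E}\big[G\,\partial_{1,h}F\big]+\mathbb{E}\big[F\,\partial_{1,h}G\big]
=\mathbb{E}\left[F\,G\left(\int_0^{\cdot}\dot{h}(s)\,\mathrm{d}\widetilde{W}(s)\right)\!(T\mathcal{Y}_\beta^{1/(2H)})\right],
\]
which, upon rearranging, is precisely \eqref{eq:IbP-ggBm} with the stated $\partial_{1,h}^*G$.

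The main obstacle, and the only point requiring genuine care, is justifying the interchange of differentiation and expectation via dominated convergence. One must produce an $\varepsilon$-uniform integrable dominating function for the difference quotients on both sides. For the left-hand side this is routine: $F,G$ are bounded with bounded derivatives, and the explicit formula \eqref{eq:derivative-cylinder-fction-ggBm} together with the estimate \eqref{eq:est-CM-norm-ggBm} controls the derivative by $\tau^{1/2}$ times constants, which is $\mathbb{P}_{Y_\beta}$-integrable by the moment formula \eqref{eq:Mbeta-moments}. For the right-hand side one must bound the derivative of the exponential martingale uniformly for $\varepsilon$ in a neighbourhood of $0$; here the boundedness of $F\cdot G$ reduces the problem to integrability of $\big(\int_0^{T\mathcal{Y}_\beta^{1/(2H)}}\dot{h}(s)\,\mathrm{d}\widetilde{W}(s)\big)\cdot\mathcal{E}(\cdots)$, which follows from the Cauchy--Schwarz inequality in $L^2(\mathbb{P}_H)$ for each fixed $\tau$, combined with the fact that $\|\dot{h}\mathbbm{1}_{[0,T\tau^{1/(2H)}]}\|_2^2\le\|h\|_{\mathcal{H}_H}^2$ and the $\mathbb{P}_{Y_\beta}$-integrability of the resulting $\tau$-dependent bound.

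Finally, the integrability of each of the three terms in \eqref{eq:IbP-ggBm} — so that the equation is meaningful and the rearrangement is legitimate — follows from Proposition~\ref{prop:direct-deriv-Lp-ggBm}--1 for the two derivative terms, and from the Burkholder--Davis--Gundy estimate applied to $\big(\int_0^{\cdot}\dot{h}(s)\,\mathrm{d}\widetilde{W}(s)\big)(T\mathcal{Y}_\beta^{1/(2H)})$ (exactly as carried out in the proof of Theorem~\ref{thm:closable-direct-derivative}) together with the boundedness of $F$ and $G$, for the stochastic-integral term. This completes the plan.
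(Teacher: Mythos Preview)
Your proposal is correct and follows essentially the same approach as the paper, whose proof simply states that it is analogous to that of Theorem~\ref{thm:IbP-gBm} with slight modifications. You have in fact spelled out those modifications in detail: applying the Cameron--Martin formula \eqref{eq:Cameron-Martin-ggBm0T} to $(FG)(w+\varepsilon h,\tau)$, differentiating at $\varepsilon=0$, and justifying the interchange via dominated convergence using the estimates from Proposition~\ref{prop:direct-deriv-Lp-ggBm} and the integrability of the stochastic integral (cf.~Lemma~\ref{lem:h-integral-PYbeta}).
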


\begin{proof} 
The proof is analogous to the one of Theorem \ref{thm:IbP-gBm} with slight modifications.
\end{proof}

From the identity \eqref{eq:IbP-ggBm} we obtain the following
\begin{thm}
\label{thm:closable-direct-derivative-ggBm}For every $p,q\geq 1$ and $h\in\mathcal{H}_{H}$
the derivative operator $\partial_{1,h}:\mathcal{F}C_{b,H}^{\infty}\longrightarrow L^{p}(\mathbb{W}\times\mathbb{R}^{+},\mathcal{F}^{X_{\beta,2H,T}},\mathbb{P}_{H}\otimes\mathbb{P}_{Y_\beta})$
is closable in $L^{q}(\mathbb{W}\times\mathbb{R}^{+},\mathcal{F}^{X_{\beta,2H,T}},\mathbb{P}_{H}\otimes\mathbb{P}_{Y_\beta})$.  
\end{thm}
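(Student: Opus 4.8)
The plan is to transcribe the argument of Theorem~\ref{thm:closable-direct-derivative} to the fractional setting, using the ggBm integration by parts formula in place of its grey counterpart. Concretely, I would fix $p,q\ge1$ and a sequence $(F_n)_{n\in\mathbb{N}}\subset\mathcal{F}C_{b,H}^\infty$ with $F_n\to0$ in $L^q(\mathcal{F}^{X_{\beta,2H,T}},\mathbb{P}_H\otimes\mathbb{P}_{Y_\beta})$ and $\partial_{1,h}F_n\to Z$ in $L^p(\mathcal{F}^{X_{\beta,2H,T}},\mathbb{P}_H\otimes\mathbb{P}_{Y_\beta})$, and show that $Z=0$. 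The driving identity is \eqref{eq:IbP-ggBm}: for every test functional $G\in\mathcal{F}C_{b,H}^\infty$ one has $\mathbb{E}[G\partial_{1,h}F_n]=\mathbb{E}[F_n\partial_{1,h}^*G]$, where $\partial_{1,h}^*G=-\partial_{1,h}G+G\left(\int_0^\cdot\dot{h}(t)\,\mathrm{d}\widetilde{W}(t)\right)(T\mathcal{Y}_\beta^{1/(2H)})$.

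The first technical step is to check that $\partial_{1,h}^*G\in L^r$ for every $r\ge1$. The term $\partial_{1,h}G$ is controlled by Proposition~\ref{prop:direct-deriv-Lp-ggBm}-1. For the corrector, the key observation is that $\widetilde{W}$ is the \emph{standard} Brownian motion associated with $W^H$ through \eqref{eq:Bm-associated-fBm}, so $\left\{\int_0^t\dot{h}(s)\,\mathrm{d}\widetilde{W}(s)\right\}$ is a continuous $L^2$-martingale with deterministic bracket $\int_0^t|\dot{h}(s)|^2\,\mathrm{d}s$. Applying the Burkholder--Davis--Gundy inequality for each fixed $\tau$ and then integrating in $\tau$ gives
\[
\int_0^\infty\!\!\int_\mathbb{W}\left|\left(\int_0^{T\tau^{1/(2H)}}\!\dot{h}(t)\,\mathrm{d}\widetilde{W}(t)\right)\!(w)\right|^r\mathrm{d}\mathbb{P}_H(w)\,\mathrm{d}\mathbb{P}_{Y_\beta}(\tau)\le C_r\int_0^\infty\left(\int_0^{T\tau^{1/(2H)}}\!|\dot{h}(t)|^2\,\mathrm{d}t\right)^{r/2}\mathrm{d}\mathbb{P}_{Y_\beta}(\tau)\le C_r\|h\|_{\mathcal{H}_H}^r,
\]
the last bound using $\int_0^{T\tau^{1/(2H)}}|\dot{h}|^2\le\|\dot{h}\|_2^2=\|h\|_{\mathcal{H}_H}^2$ and that $\mathbb{P}_{Y_\beta}$ is a probability measure; since $G$ is bounded, $\partial_{1,h}^*G\in L^r$ for all $r$.

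With integrability secured, the conclusion is immediate. Taking $q'$ conjugate to $q$ and using $\partial_{1,h}^*G\in L^{q'}$ together with $F_n\to0$ in $L^q$, the integration by parts formula yields
\[
\mathbb{E}[GZ]=\lim_{n\to\infty}\mathbb{E}[G\partial_{1,h}F_n]=\lim_{n\to\infty}\mathbb{E}[F_n\partial_{1,h}^*G]=0
\]
for every $G\in\mathcal{F}C_{b,H}^\infty$. Since $\mathcal{F}^{X_{\beta,2H,T}}$ is generated by the functionals in $\mathcal{F}C_{b,H}^\infty$ and this class is dense in $L^p(\mathcal{F}^{X_{\beta,2H,T}},\mathbb{P}_H\otimes\mathbb{P}_{Y_\beta})$, I conclude $Z=0$.

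The step I expect to demand the most care is the $\mathcal{F}^{X_{\beta,2H,T}}$-measurability of the corrector $\left(\int_0^\cdot\dot{h}\,\mathrm{d}\widetilde{W}\right)(T\mathcal{Y}_\beta^{1/(2H)})$. In the grey case the integral is taken directly against $W^{1/2}=w$, so its Riemann sums are manifestly measurable with respect to $\sigma(w(t\tau):t\in[0,T])$; here, by contrast, the integrator $\widetilde{W}$ is the Brownian motion \emph{reconstructed} from $W^H$ through the kernel $K_H$. The resolution is to invoke that $\widetilde{W}$ and $W^H$ generate the same filtration up to any finite time (equivalently, that the invertibility of $\mathcal{K}_H$ lets one express $\widetilde{W}$-increments on $[0,T\tau^{1/(2H)}]$ as limits of functionals of $\{w(t\tau^{1/(2H)}):t\in[0,T]\}$), so that evaluating the Wiener integral at the random endpoint $T\mathcal{Y}_\beta^{1/(2H)}$ stays inside $\mathcal{F}^{X_{\beta,2H,T}}=\sigma(w(t\tau^{1/(2H)}):t\in[0,T])$.
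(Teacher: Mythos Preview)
Your proposal is correct and follows essentially the same argument as the paper's proof: fix a test functional $G\in\mathcal{F}C_{b,H}^\infty$, use Proposition~\ref{prop:direct-deriv-Lp-ggBm} together with the Burkholder--Davis--Gundy inequality to place $\partial_{1,h}^*G$ in every $L^r$, then apply the integration by parts formula \eqref{eq:IbP-ggBm} and density of $\mathcal{F}C_{b,H}^\infty$ to conclude $Z=0$. Your identification of the $\mathcal{F}^{X_{\beta,2H,T}}$-measurability of the corrector as the delicate point, resolved via the equality of the filtrations of $W^H$ and $\widetilde W$ (i.e., the invertibility of $\mathcal{K}_H$), matches the paper exactly---it cites Corollary~4.1 in \cite{Decreusefond22} for precisely this fact.
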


\begin{proof}
Let $p,q\ge1$ be given. We have to show that if $(F_{n})_{n\in\mathbb{N}}\subset\mathcal{F}C_{b,H}^{\infty}$
is a sequence such that $F_{n}\longrightarrow0$ in $L^{q}(\mathcal{F}^{X_{\beta,2H,T}},\mathbb{P}_{H}\otimes\mathbb{P}_{Y_\beta})$
and $\partial_{1,h}F_{n}\longrightarrow Z$ in $L^{p}(\mathcal{F}^{X_{\beta,2H,T}},\mathbb{P}_{H}\otimes\mathbb{P}_{Y_\beta})$, then
$Z=0$ in $L^{p}(\mathcal{F}^{X_{\beta,2H,T}},\mathbb{P}_{1/2}\otimes\mathbb{P}_{Y_\beta})$.

Let $G\in\mathcal{F}C_{b,H}^{\infty}$
be fixed and $r\ge1$. According to Proposition~\ref{prop:direct-deriv-Lp-ggBm}-1
we have $\partial_{1,h}G\in L^{r}(\mathcal{F}^{X_{\beta,2H,T}},\mathbb{P}_{H}\otimes\mathbb{P}_{Y_\beta})$. In addition, using the Burkholder--Davis--Gundy
inequality, there exists a constant $C_r$ dependent  only in $r$ such that  
\begin{eqnarray*}
 & & \int_{0}^{\infty}\int_{\mathbb{W}}\left|\left(\int_{0}^{T\tau_H}\dot{h}(t)\,\mathrm{d}\widetilde{W}(t)\right)(w)\right|^{r}\mathrm{d}\mathbb{P}_{H}(w)\,\mathrm{d}\mathbb{P}_{Y_{\beta}}(\tau)\\
 & \le & C_{r}\int_{0}^{\infty}\left(\int_{0}^{T\tau_H}|\dot{h}(t)|^{2}\,\mathrm{d}t\right)^{r/2}\mathrm{d}\mathbb{P}_{Y_{\beta}}(\tau)\\
 & \le & C_{r}\|\dot{h}\|_{2}^{r/2}=C_{r}\|h\|_{\mathcal{H}_{H}}^{r/2}.
\end{eqnarray*}
We know that the Wiener integral $\int_{0}^{\cdot}\dot{h}(t)\,\mathrm{d}\widetilde{W}(t)$ is the limit in probability of Riemann sums; see \cite{Revuz-Yor-94}. Furthermore, the $\sigma$ -algebras of $W^H$ and $\widetilde{W}$ are the same; see Corollary~4.1 in \cite{Decreusefond22}. Consequently, $\left(\int_{0}^{\cdot}\dot{h}(t)\,\mathrm{d}\widetilde{W}(t)\right)(T\mathcal{Y}_{\beta})$ is $\mathcal{F}^{X_{\beta,2H,T}}$-measurable.
Thus, 
\[
\partial_{1,h}^{*}G=-\partial_{1,h}G+G\left(\int_{0}^{\cdot}\dot{h}(t)\,\mathrm{d}\widetilde{W}(t)\right)(T\mathcal{Y}_{\beta})\in L^{r}(\mathcal{F}^{X_{\beta,2H,T}},\mathbb{P}_{H}\otimes\mathbb{P}_{Y_\beta}).
\]
Therefore, the integration by parts (\ref{eq:IbP-ggBm}) and the fact that $\partial_{1,h}^*G\in L^{q'}(\mathbb{P}_{H}\otimes\mathbb{P}_{Y_\beta})$, $q'$ is the conjugate exponent of $q$, yields 
\[
\mathbb{E}[GZ]=\lim_{n\to\infty}\mathbb{E}[G\partial_{1,h}F_{n}]=\lim_{n\to\infty}\mathbb{E}[F_{n}\partial_{1,h}^{*}G]=0.
\]
We can deduce that $Z=0$ from the facts that the $\sigma$-algebra $\mathcal{F}^{X_{\beta,2H,T}}$ is generated by the elements of $\mathcal{F}C_{b,H}^{\infty}$ and the density of  $\mathcal{F}C_{b,H}^{\infty}$ in $L^{p}(\mathcal{F}^{X_{\beta,2H,T}},\mathbb{P}_{H}\otimes\mathbb{P}_{Y_\beta})$.
\end{proof}

Finally, we are ready to prove the closability of the operator $\nabla_1$ on $L^p(\mathbb{P}_H\otimes\mathbb{P}_{Y_\beta})$, which is the content of the following theorem.
\begin{thm}
\label{thm:closable-gradient-ggBm}For every $p,q\geq 1$, the operator $\nabla_1:\mathcal{F}C_{b,H}^{\infty}\longrightarrow L_{\mathcal{H}_{H}}^{p}(\mathcal{F}^{X_{\beta,2H,T}},\mathbb{P}_{H}\otimes\mathbb{P}_{Y_\beta})$
is closable on $L^{q}(\mathbb{W}\times\mathbb{R}^+,\mathcal{F}^{X_{\beta,2H,T}},\mathbb{P}_{H}\otimes\mathbb{P}_{Y_\beta})$.
\end{thm}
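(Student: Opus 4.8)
The plan is to follow verbatim the scheme of Theorem~\ref{thm:clasable-gradient}, substituting each grey-Brownian-motion object by its generalized counterpart and invoking the ggBm results established above. First I would fix $p,q\ge 1$ and take a sequence $(F_n)_{n\in\mathbb{N}}\subset\mathcal{F}C_{b,H}^\infty$ with $F_n\to 0$ in $L^q(\mathbb{W}\times\mathbb{R}^+,\mathcal{F}^{X_{\beta,2H,T}},\mathbb{P}_H\otimes\mathbb{P}_{Y_\beta})$ and $\nabla_1 F_n\to Z$ in $L_{\mathcal{H}_H}^p(\mathcal{F}^{X_{\beta,2H,T}},\mathbb{P}_H\otimes\mathbb{P}_{Y_\beta})$; the goal is to conclude that $Z=0$.

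The key reduction is to test $Z$ against single directions. For fixed $h\in\mathcal{H}_H$, the identity \eqref{eq:derivative-repretion-ggBm} from Proposition~\ref{prop:cont-derivative-ggBm} gives $\partial_{1,h}F_n=(\nabla_1 F_n,h)_{\mathcal{H}_H}$. Since the pointwise map $u\mapsto (u,h)_{\mathcal{H}_H}$ is $\|h\|_{\mathcal{H}_H}$-Lipschitz by Cauchy--Schwarz, it is continuous from $L_{\mathcal{H}_H}^p$ to $L^p$, so $\nabla_1 F_n\to Z$ yields $\partial_{1,h}F_n\to (Z,h)_{\mathcal{H}_H}$ in $L^p(\mathcal{F}^{X_{\beta,2H,T}},\mathbb{P}_H\otimes\mathbb{P}_{Y_\beta})$. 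On the other hand $F_n\to 0$ in $L^q$, so the closability of the directional derivative $\partial_{1,h}$ proved in Theorem~\ref{thm:closable-direct-derivative-ggBm} forces $(Z,h)_{\mathcal{H}_H}=0$ in $L^p$, for every $h\in\mathcal{H}_H$.

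It then remains to upgrade this family of scalar identities to the vanishing of $Z$ as an $\mathcal{H}_H$-valued function. From $(Z,h)_{\mathcal{H}_H}=0$ almost surely I would deduce $\mathbb{E}[G(Z,h)_{\mathcal{H}_H}]=0$ for every $G\in\mathcal{F}C_{b,H}^\infty$, and then by linearity $\mathbb{E}[(Z,\sum_{i=1}^n G_i h_i)_{\mathcal{H}_H}]=0$ for every element of the set
\[
S(\mathcal{H}_H):=\left\{\sum_{i=1}^n G_i h_i \,\middle|\, n\in\mathbb{N},\,G_i\in\mathcal{F}C_{b,H}^\infty\right\},
\]
where $\{h_i\mid i\in\mathbb{N}\}$ is an orthonormal basis of the separable Hilbert space $\mathcal{H}_H$ (such a basis exists because $\tilde{\mathcal{K}}_H$ realizes $\mathcal{H}_H$ isometrically as $L^2(\lambda)$). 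Each $\Phi\in S(\mathcal{H}_H)$ is a bounded $\mathcal{H}_H$-valued function, hence lies in every $L^r_{\mathcal{H}_H}$; since $S(\mathcal{H}_H)$ is dense in $L^{p'}_{\mathcal{H}_H}(\mathcal{F}^{X_{\beta,2H,T}},\mathbb{P}_H\otimes\mathbb{P}_{Y_\beta})$ and $Z\in L^p_{\mathcal{H}_H}$ acts as a continuous functional through the duality pairing $\Phi\mapsto\mathbb{E}[(Z,\Phi)_{\mathcal{H}_H}]$, the vanishing of this functional on a dense set forces $Z=0$.

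I do not expect a genuine obstacle here: the argument is a near-mechanical transfer of the gBm proof, the only nontrivial inputs being the already-established closability of $\partial_{1,h}$ (Theorem~\ref{thm:closable-direct-derivative-ggBm}) and the representation \eqref{eq:derivative-repretion-ggBm}. The one point requiring mild care is the final density/duality step, which rests on $\mathcal{H}_H$ being separable and on $\mathcal{F}C_{b,H}^\infty$ being dense in $L^p(\mathcal{F}^{X_{\beta,2H,T}},\mathbb{P}_H\otimes\mathbb{P}_{Y_\beta})$ while generating $\mathcal{F}^{X_{\beta,2H,T}}$; both facts are available, the latter being exactly the density already used in Theorem~\ref{thm:closable-direct-derivative-ggBm}.
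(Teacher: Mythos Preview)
Your proposal is correct and follows essentially the same route as the paper's proof: reduce to single directions via \eqref{eq:derivative-repretion-ggBm}, invoke the closability of $\partial_{1,h}$ from Theorem~\ref{thm:closable-direct-derivative-ggBm} to get $(Z,h)_{\mathcal{H}_H}=0$, and then pass to $Z=0$ by testing against $S(\mathcal{H}_H)$ and using density. Your write-up is in fact slightly more careful than the paper's in justifying the continuity of $u\mapsto(u,h)_{\mathcal{H}_H}$ and in phrasing the final duality step with the conjugate exponent $p'$ rather than $p$.
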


\begin{proof}
Let $p,q\ge1$ and $(F_{n})_{n\in\mathbb{N}}\subset\mathcal{F}C_{b,H}^{\infty}$
be a sequence such that $F_{n}\longrightarrow0$ in $L^{q}(\mathcal{F}^{X_{\beta,2H,T}},\mathbb{P}_{H}\otimes\mathbb{P}_{Y_\beta})$
and $\nabla_1 F_{n}\longrightarrow Z$ in $L_{\mathcal{H}_{H}}^{p}(\mathcal{F}^{X_{\beta,2H,T}},\mathbb{P}_{H}\otimes\mathbb{P}_{Y_\beta})$.
We have to show that $Z=0$ in $L_{\mathcal{H}_{H}}^{p}(\mathcal{F}^{X_{\beta,2H,T}},\mathbb{P}_{H}\otimes\mathbb{P}_{Y_\beta})$.
First, notice that for any $h\in\mathcal{H}_{H}$ we have
\[
\partial_{1,h}F_{n}=(\nabla_1 F_{n},h)_{\mathcal{H}_{H}}\longrightarrow(Z,h)_{\mathcal{H}_{H}},\;\mathrm{in}\;L^{p}(\mathcal{F}^{X_{\beta,2H,T}},\mathbb{P}_{H}\otimes\mathbb{P}_{Y_\beta})\;\mathrm{as}\;n\to\infty.
\]
Since $\partial_{1,h}F_{n}$ is closable (cf.~Theorem~\ref{thm:closable-direct-derivative-ggBm}),
then $(Z,h)_{\mathcal{H}_{H}}=0$ in $L^{p}(\mathcal{F}^{X_{\beta,2H,T}},\mathbb{P}_{H}\otimes\mathbb{P}_{Y_\beta})$. 
Hence,
\[
\mathbb{E}[G(Z,h)_{\mathcal{H}_{H}}]=0,\quad\forall G\in\mathcal{F}C_{b,H}^{\infty}.
\]
By linearity we obtain 
\[
\mathbb{E}\left[\left(Z,\sum_{i=1}^n G_ih_i \right)_{\mathcal{H}_{H}}\right]=0,
\]
for any element of the set 
\[
S(\mathcal{H}_{H}):=\left\{ \sum_{i=1}^{n}G_{i}h_{i}\,\middle|\, n\in\mathbb{N},\, G_{i}\in\mathcal{F}C_{b,H}^{\infty}\right\}, 
\]
where $\{h_{i}\in\mathcal{H}_{H},\;i\in\mathbb{N}\}$ is an orthonormal
basis of $\mathcal{H}_{H}$. Since $S(\mathcal{H}_{H})$ is dense in $L_{\mathcal{H}_{H}}^{p}(\mathcal{F}^{X_{\beta,2H,T}},\mathbb{P}_{H}\otimes\mathbb{P}_{Y_\beta})$, 
we conclude that $Z=0$ in $L_{\mathcal{H}_{H}}^{p}(\mathcal{F}^{X_{\beta,2H,T}},\mathbb{P}_{1/2}\otimes\mathbb{P}_{Y_\beta})$.
\end{proof}

\section{Appendix}
In this section, we establish the existence of the integrals that appear in the proof of Theorems~\ref{thm:IbP-gBm} and \ref{thm:IbP-ggBm}. 

\begin{lem}\label{lem:h-integral-PYbeta}
Let $H\in [1/2,1)$ and $h\in \mathcal{H}_{H}$ be given. Then, for every $t\geq 0$, we have
\begin{enumerate}
    \item The random variable $(w,\tau)\mapsto\displaystyle\left(\int_{0}^{t\tau}\dot{h}(s)\,\mathrm{d}W^{1/2}(s)\right)(w)$ belongs to $L^{2}\big(\mathbb{W}\times\mathbb{R}^+,\mathcal{B}(\mathbb{W})\otimes\mathcal{B}(\mathbb{R}^+),\mathbb{P}_{1/2}\otimes\mathbb{P}_{Y_{\beta}}\big)$, $H=1/2$.
    \item The random variable $(w,\tau)\mapsto \displaystyle\left(\int_{0}^{t\tau}\dot{h}(s)\,\mathrm{d}\widetilde{W}(s)\right)(w)$ belongs to $ L^{2}\big(\mathbb{W}\times\mathbb{R}^+,\mathcal{B}(\mathbb{W})\otimes\mathcal{B}(\mathbb{R}^+),\mathbb{P}_{H}\otimes\mathbb{P}_{Y_{\beta}}\big)$, $H\in(1/2,1)$.
\end{enumerate}
\end{lem}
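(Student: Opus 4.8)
The plan is to treat both parts simultaneously, since $W^{1/2}$ under $\mathbb{P}_{1/2}$ and $\widetilde{W}$ under $\mathbb{P}_{H}$ (cf.~\eqref{eq:Bm-associated-fBm}) are both standard Brownian motions, and for each fixed $\tau$ the quantity in question is an ordinary Wiener--It\^o integral of the deterministic integrand $\dot{h}\in L^{2}(\lambda)$. The key facts I would invoke are the It\^o isometry, which gives for each fixed $\tau$ that $\int_{0}^{t\tau}\dot{h}(s)\,\mathrm{d}W^{1/2}(s)$ is a centered Gaussian random variable with second moment $\int_{0}^{t\tau}|\dot{h}(s)|^{2}\,\mathrm{d}s$, together with the uniform bound $\int_{0}^{t\tau}|\dot{h}(s)|^{2}\,\mathrm{d}s\le\|\dot{h}\|_{2}^{2}=\|h\|_{\mathcal{H}_{H}}^{2}$, valid because $\dot{h}\in L^{2}(\lambda)$ and $[0,t\tau]\subset\mathbb{R}^{+}$.

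Before integrating in $\tau$, the first step is to fix a jointly measurable version of the object, since the Wiener integral is a priori defined only $\mathbb{P}_{1/2}$-almost surely for each fixed endpoint. For this I would realize the integral as the value at time $u=t\tau$ of the continuous square-integrable martingale $M_{u}:=\int_{0}^{u}\dot{h}(s)\,\mathrm{d}W^{1/2}(s)$, $u\ge0$; its continuous-in-time modification makes $(u,w)\mapsto M_{u}(w)$ jointly measurable, and composing with the $\mathcal{B}(\mathbb{W})\otimes\mathcal{B}(\mathbb{R}^{+})$-measurable map $(w,\tau)\mapsto(t\tau,w)$ yields the desired joint measurability of $(w,\tau)\mapsto M_{t\tau}(w)$. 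The same construction applies verbatim in part~2 with $\widetilde{W}$ in place of $W^{1/2}$.

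With measurability in hand, the final step is a Tonelli computation:
\begin{align*}
\int_{\mathbb{W}\times\mathbb{R}^{+}}\left|\left(\int_{0}^{t\tau}\dot{h}(s)\,\mathrm{d}W^{1/2}(s)\right)(w)\right|^{2}\,\mathrm{d}(\mathbb{P}_{1/2}\otimes\mathbb{P}_{Y_{\beta}})(w,\tau)
&=\int_{0}^{\infty}\left(\int_{0}^{t\tau}|\dot{h}(s)|^{2}\,\mathrm{d}s\right)\mathbb{P}_{Y_{\beta}}(\mathrm{d}\tau)\\
&\le\|\dot{h}\|_{2}^{2}\,\mathbb{P}_{Y_{\beta}}(\mathbb{R}^{+})=\|\dot{h}\|_{2}^{2}<\infty,
\end{align*}
where the equality uses the It\^o isometry fibrewise and the inequality uses the uniform bound above together with the fact that $\mathbb{P}_{Y_{\beta}}$ is a probability measure. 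Part~2 is identical once one recalls from \eqref{eq:Bm-associated-fBm} that $\widetilde{W}$ is a standard Brownian motion under $\mathbb{P}_{H}$, so the isometry and the bound carry over unchanged. The only non-routine point is the joint measurability of the integral as a function of $(w,\tau)$; once that is settled through the continuous martingale modification, the $L^{2}$-estimate is an immediate consequence of Tonelli's theorem and the uniform control of the integrand's $L^{2}(\lambda)$-mass, and the precise form of the upper endpoint (here $t\tau$, or $t\tau^{1/(2H)}$ as needed in the applications) is immaterial.
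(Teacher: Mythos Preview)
Your argument is correct and follows essentially the same route as the paper: Tonelli's theorem combined with the It\^o isometry applied fibrewise in $\tau$, using that both $W^{1/2}$ under $\mathbb{P}_{1/2}$ and $\widetilde{W}$ under $\mathbb{P}_{H}$ are standard Brownian motions, and then the uniform bound $\int_{0}^{t\tau}|\dot{h}(s)|^{2}\,\mathrm{d}s\le\|\dot{h}\|_{2}^{2}$. Your treatment is in fact slightly more careful than the paper's, since you explicitly address the joint measurability of $(w,\tau)\mapsto M_{t\tau}(w)$ via the continuous martingale modification, a point the paper leaves implicit.
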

\begin{proof}
It is a consequence of the Tonelli theorem and It\^o's isometry formula. Indeed, since $W^{1/2}$ and $\widetilde{W}$ are Brownian motions under $\mathbb{P}_{1/2}$ and $\mathbb{P}_{H}$, $H\in (1/2,1)$, respectively, we obtain the bound

\begin{eqnarray*}
\left\|\int_{0}^{t\cdot}\dot{h}(s)\,\mathrm{d}\widetilde{W}(s)\right\|^2_{L^{2}(\mathbb{P}_{H}\otimes\mathbb{P}_{Y_{\beta}})}
&=&\left\|\int_{0}^{t\cdot}\dot{h}(s)\,\mathrm{d}W^{1/2}(s)\right\|^2_{L^{2}(\mathbb{P}_{1/2}\otimes\mathbb{P}_{Y_{\beta}})}\\&=&\int_{\mathbb{W}\times\mathbb{R}^{+}}\left(\int_{0}^{t\tau}\dot{h}(s)\,\mathrm{d}W^{1/2}(s)\right)^{2}\!\! (w)\,\mathbb{P}_{1/2}(\mathrm{d}w)\,\mathbb{P}_{Y_{\beta}}(\mathrm{d}\tau) \\ &=&\int_{\mathbb{R}^{+}}\left(\int_{0}^{t\tau}\dot{h}^{2}(s)\,\mathrm{d}s\right)\mathrm{d}\mathbb{P}_{Y_{\beta}}(\tau)\\
&\leq& \|\dot{h}\|_2.
\end{eqnarray*}
\end{proof}

\subsection*{Acknowledgments}

We express our gratitude to Hassan El-Essaky for the
hospitality during an enjoyable stay at the Polydisciplinary Faculty
of Safi on the occasion of the 3rd International Conference on Applied Mathematics and Computer Science during which part of this work was done. The last author expresses gratitude to Professor Michael R{\"o}ckner for the invitation to the University of Bielefeld when this research was concluded and for the opportunity to give a talk at the Bielefeld Stochastic Afternoon seminar. The second author is grateful to CIMA--University of Madeira for providing a pleasant stay during the summer of 2023, during which a portion of this work was completed.

\subsection*{Founding}
The Center for Research in Mathematics and Applications (CIMA) has partially funded this work related to the Statistics, Stochastic Processes
and Applications (SSPA) group, through the grant UIDB/MAT/04674/2020,  \url{https://doi.org/10.54499/UIDB/04674/2020} 
of FCT-Funda{\c c\~a}o para a Ci{\^e}ncia e a Tecnologia, Portugal. Funded by the Deutsche Forschungsgemeinschaft (DFG, German Research Foundation) -- Project-ID 317210226 -- SFB 1283.

\end{document}